\documentclass[10pt]{siamonline250211}

\usepackage{amssymb,mathrsfs,mathtools,bbm,mathdots,algpseudocode,siunitx,nicematrix,tabularx,booktabs}
\usepackage{lipsum,bm,comment,xurl}
\usepackage{float,adjustbox,todonotes,subfig}
\usepackage{enumitem}
\setlist[enumerate]{leftmargin=.5in}
\setlist[itemize]{leftmargin=.5in}

\usepackage{tikz}
\usepackage{tikz-cd}
\usetikzlibrary{shapes.geometric,arrows}
\tikzset{commutative diagrams/.cd}
\usepackage{pgfplots}
\pgfplotsset{compat=1.18}

\let\S\relax
\allowdisplaybreaks

\newsiamremark{remark}{Remark}
\newsiamremark{notation}{Notation}
\newsiamremark{example}{Example}
\newsiamremark{examples}{Examples}

\newcommand{\RR}{\mathbb{R}}

\newcommand{\NN}{\mathbb{N}}

\newcommand{\KK}{\mathbb{K}}
\newcommand{\CC}{\mathbb{C}}
\newcommand{\PP}{\mathbb{P}}

\newcommand{\A}{\mathcal{A}}

\newcommand{\C}{\mathcal{C}}
\newcommand{\B}{\mathcal{B}}
\newcommand{\S}{\mathcal{S}}

\newcommand{\W}{\mathcal{W}}
\newcommand{\Z}{\mathcal{Z}}
\newcommand{\cl}[1]{\mathcal{#1}}

\newcommand{\supp}{\mathrm{supp}}

\newcommand{\Sym}{\mathrm{Sym}}

\newcommand{\rank}{\mathrm{rank}}

\newcommand{\be}{\begin{equation}}
\newcommand{\ee}{\end{equation}}

\renewcommand{\d}[1]{\ensuremath{\mathrm{d}\,\!{#1}}}
\renewcommand{\restriction}{\mathord{\upharpoonright}}

\headers{Symmetric tensor decomposition on rational varieties}{M. Bechere, S. Kuhlmann, and B. Mourrain}

\title{Symmetric tensor decomposition on rational varieties}
\author{Matteo Bechere\thanks{Fachbereich Mathematik und Statistik, Universität Konstanz, 78457 Konstanz, Germany (\email{matteo.bechere@uni-konstanz.de})}
\and Salma Kuhlmann\thanks{Fachbereich Mathematik und Statistik, Universität Konstanz, 78457 Konstanz, Germany (\email{salma.kuhlmann@uni-konstanz.de})}
\and Bernard Mourrain\thanks{Inria Centre at Université Côte d’Azur, Sophia Antipolis, France (\email{bernard.mourrain@inria.fr})}
}

\begin{document}
\small
\maketitle
\begin{abstract}
    We study the Waring decomposition of symmetric tensors with nodes on a rational variety. We provide an explicit characterization of the existence of such a decomposition under a technical surjectivity assumption, and introduce an efficient algorithm to decompose this novel class of structured symmetric tensors. The framework directly generalizes Hankel tensors (Qi 2015) to the multivariate setting. We analyse in detail the case of toric varieties and rational curves. Proving the existence of a quadrature formula of even strength $2N$ with at most $N+1$ nodes that avoids a prescribed finite set of points, we establish new sharp upper bounds on the minimal number of nodes for quadrature formulae on rational curves. Numerical experimentation demonstrates the computational advantage of this approach, compared to classical direct approaches.
\end{abstract}
\begin{keywords}
Symmetric tensor, Tensor decomposition, Positive decomposition, Truncated moment problem, Moment problem, Hankel, Rational variety, Waring, Rank
\end{keywords}
\begin{MSCcodes}
14N07, 14Q15, 15A69, 44A60
\end{MSCcodes}
\section{Introduction}

Tensors are a central tool in many branches of mathematics, such as in linear algebra, differential geometry, and in other fields, such as signal processing, electrical engineering, statistics and finance, thanks to their modelling power. Hence, efficiently manipulating and understanding tensors is crucial for many real-world problems involving multidimensional data. The purpose of tensor decomposition is to express a tensor as a sum of simpler tensors. This process unveils hidden information and additional properties which are often invisible in the starting form. Moreover, the minimal length of a tensor decomposition, also known as rank, is a measure of the complexity of the tensor.
Among tensors, symmetric tensors occupy a special place: they remain invariant under permutations of their arguments, reflecting inherent symmetries in the system they model. This property makes them especially important in models where order does not matter, such as stress tensors in continuum mechanics or covariance matrices in statistics. Furthermore, a symmetric tensor can be identified with a homogeneous polynomial. This perspective opens the door to interactions with algebraic geometry. In the setting of homogeneous polynomials, symmetric tensor decompositions are also known as Waring decompositions.

The primary motivation of our work is to analyze decompositions of tensors with \emph{nodes}  in a certain algebraic variety $\cl X$. 
The general Waring  decomposition of symmetric tensors of degree $d$ is a special case where $\cl X$ is either the projective space $\mathbb{P}^{n}$ or its Veronese embedding $\cl V_{d,n}$ in degree $d$.
For the decomposition of multilinear tensors, $\cl X$ is the Segre variety. For further details on 
this geometric point of view and on methods for computing such decompositions, we refer to \cite{landsberg2012tensors,math6120314,Ballico2012Decomposition,symmetrictensordecomp,BERNARDI201351,MR3032635}.
The problem of decomposing a form as a weighted sum of $k$-th powers of degree-$h$ forms is known as  \emph{$k$-th Waring decomposition problem} 
\cite{MR3906540,MR4792418} and it corresponds to the  case where $\cl X$ is the Veronese embedding in degree $k$ of the space of degree $h$ forms.

Remarkably, such tensor decompositions play a central role in the study of multiplication tensors in algebraic complexity theory. In particular, the tensor associated with matrix multiplication encodes the complexity of matrix products: its rank and its border rank are closely tied to the exponent of matrix multiplication \cite{Strassen1969,landsbergMultiplication}.

The general problem of symmetric tensor decomposition on a variety given by equations has been studied in \cite{nie2020symmetrictensordecompositionsvarieties}, where tensor decompositions on $\cl X$ are obtained using a refinement of the techniques used in \cite{symmetrictensordecomp, MR3627453,MR3032635}. The case of varieties given by linear projections of projective varieties from linear subspaces is studied in \cite{XRankBernardiBallico}.

In this paper, we consider projective varieties, given as the image of a rational map $\bm q$.
The case where $\bm q$ is the one associated to all binary monomials of a given degree is considered in \cite{qiHankelTensors}. This leads to the concept of \emph{Hankel tensors}, introduced in \cite{HankelTensorsOriginal} and further studied in \cite{qiHankelTensors,HankelTensorRank}. They have been used in fields such as geophysics \cite{Adamo2014HankelOMP,Trickett2013HankelTensor} and data analysis \cite{SIGNORETTO2011861}.

Specializing to positive decompositions, the problem becomes strictly connected to solving finite-dimensional truncated moment problems, as is illustrated in \cref{prop:waringIFFcoeffsmomentsequence}. Furthermore, the celebrated result by Richter \cite[Satz 4]{Richter1957} guarantees the representing measure, if it exists, to be finitely atomic. This observation further links the aforementioned problems to quadrature formulae. Computing quadrature formulae has the goal of approximating integrals of polynomials by a finite weighted sum of evaluations of said polynomials at a prescribed set of nodes. The connection between positive Waring decompositions and quadrature formulae is also documented in \cite[Chap. 7]{reznick-SOEP}. Given that well-behaved functions can be approximated by polynomials, the computation of quadrature formulae finds countless applications in numerical integration for convolution integrals appearing naturally in computational molecular biology and finance \cite{MR3152336,MR3073359,Caflisch1997ValuationOM,MR1458765}. Consequently, extensive work has been done on computation of quadrature formulae \cite{MR3490787,MR507557,MR3082517,MR1209428,MR119417,MR269119,MR94632}, in particular through moment theory as in \cite{MR3879968}, and on the estimation of the minimum number of nodes needed \cite{MR1489255,MR561295,MR1846517,MR656522,MR1258086,MR33206}. Further related works are \cite{MR1938799,MR2216081,MR2099788,MR3409310}, addressing truncated moment problems on varieties defined by one polynomial equation $p(x,y)=0$ with $\deg(p)\leq 2$ and, in \cite{Zalar12082024}, addressing the truncated moment problem on curves in the plane. More recently, in \cite{MR3748596}, the authors study quadrature formulae with nodes lying on algebraic curves in the plane, by means of optimization techniques.

Computing such a minimal Waring decomposition of a tensor is not a simple task. In fact, it is known to be an NP-hard problem \cite{MR3144915}. Another motivation of our work is to demonstrate that if the tensor has a structure, this decomposition problem can be addressed more efficiently. 
Algorithmic aspects of this decomposition problem 
have only been partially investigated in \cite{symmetrictensordecomp,gamertsfelder}.

In what follows, we show that if the tensor is $\bm q$-Symmetric (see \cref{def:q-symm}), its decomposition reduces to the decomposition of a higher-order tensor in smaller dimension, with the same rank, which is easier to handle by standard tensor decomposition methods (see \cref{SectionNumericalExperimentations}). 

\emph{Contributions:} In this paper, we study the decomposition properties of $\bm q$-Symmetric tensors, which form a subspace of the vector space of symmetric tensors. For a choice of forms $\bm q$, such that the substitution map $W_{\bm q}$ is surjective, we prove in \cref{QSymOnlyDec} that the $\bm q$-Symmetric tensors are exactly those having a Waring decomposition on the rational variety $V_{\bm q}$ parametrized by ${\bm q}$ (i.e. the projective points corresponding to the Waring decomposition lie in the image of the rational map $\bm q$), by proving the existence of a length-preserving bijective correspondence between $\bm q$-Symmetric decompositions of a $\bm q$-Symmetric tensor $p$ and Waring decompositions of an associated form $\psi_{\bm q}(p)$ (see \cref{QSymMainTheorem}). Given the length-preserving property of the correspondence, our method also results in an effective method to obtain Waring decompositions of high-rank forms, if they lie in the space of $\bm q$-Symmetric tensors, as is presented in \cref{SectionExampleDecomp}.
In \cref{DualCone}, we prove that, over the real numbers, the cone dual to the cone of positive $\bm q$-Symmetric tensors is the cone of forms which are non-negative on $V_{\bm q}$.
A special instance of the binary monomial $\bm q$-Symmetric tensors, the Hankel tensors, were studied in \cite{qiHankelTensors}. We generalize this theory to higher dimensions and general maps $\bm q$. In \cref{RankBound} we also improve the bound in \cite[Thm. 4.1]{qiHankelTensors} on the length of a minimal $\bm q$-Symmetric decomposition of a $\bm q$-Symmetric tensor. Furthermore, we  improve the proof  of \cite[Thm. 3.1]{qiHankelTensors} and answer \cite[Question 4.2]{qiHankelTensors} positively.

Additionally, we link the  positive $\bm q$-Symmetric tensor decomposition problem to the problem of computing quadrature formulae supported on algebraic curves. Under an additional technical condition, in \cref{ComparisonWithRienerTuratti}, we improve the bound of  \cite[Thm.1.1]{Riener2025QuadratureRW} on the number of nodes of quadrature formulae. As a tool to prove the aforementioned theorem, in \cref{thm:auxiliary}, we prove that it is possible to construct a quadrature formula of strength $2N$ with $N+1$ nodes for a non-degenerate truncated moment sequence (i.e. the Hankel matrix is invertible) on the real line which avoids a finite set of points. This theorem treats the problem of excluding prescribed points from the node set, a topic that, to the best of our knowledge, has not been investigated in the past and is complementary to the question of prescribing the quadrature formula to have a specific node (see e.g. \cite[Corollary 2.3]{blekherman2020generalized}), or multiple prescribed nodes (see \cite[Thm. 3.1]{nailwal2024gaussianquadraturesprescribednodes}).

Furthermore, we address the surjectivity assumption on $W_{\bm q}$ by characterizing it in terms of the Hilbert function of the vanishing ideal of $V_{\bm q}$ and also by stating sufficient conditions on the set $\bm q$ (see \cref{HilbertFuncCharacterization} and \cref{SurjectivityConditions}). We formalize our results in \cref{AlgQSym} to compute $\bm q$-Symmetric decompositions of any $\bm q$-Symmetric tensor $p$, provided one is able to find Waring decompositions of $\psi_{\bm q}(p)$. \cref{AlgQSym} is implemented in the Julia package \href{https://github.com/matteobechere/QSymDecomposition.jl}{QSymDecomposition.jl} \cite{QSymDecomposition} and in \cref{Sec4ExplicitDecomp} we demonstrate its effectiveness at decomposing tensors of higher rank.

\emph{Organization of the paper:} The paper is organized as follows. In \cref{Sec2Preliminaries} we introduce preliminary definitions on Waring decompositions and classical apolarity theory. In \cref{Sec3qSymForms} we define the subspace of $\bm q$-Symmetric tensors, study the surjectivity condition on $W_{\bm q}$ and prove the main results on the decomposition properties of $\bm q$-Symmetric tensors. Furthermore, we specialize to the even-degree case to state some results about the cone of positive $\bm q$-Symmetric tensors and its dual. Additionally, we examine in depth the special case when $\bm q$ is a vector of monomials and also a vector of binary forms, corresponding respectively to the problem of computing Waring decompositions on toric varieties and rational curves. Finally, we apply the theory of binary $\bm q$-Symmetric tensors to the problem of computing quadrature formulae and bounding the required number of nodes. In \cref{Sec4ExplicitDecomp} we formalize our decomposition results into \cref{AlgQSym} and provide examples of explicit decompositions of $\bm q$-Symmetric tensors.

\section{Preliminaries}\label{Sec2Preliminaries}

In the following, $\NN=\{1,2,3,\ldots\}$ is the set of positive integers, $\NN_0=\NN\cup\{0\}$, $\KK=\RR$ or $\KK=\CC$, and $\PP^n(\KK)$ denotes the $n$-dimensional \emph{projective space}. Vectors are intended as column vectors.

$\KK[X_1,\ldots,X_n]_{\leq k}$ is the vector space of polynomials in $n$ variables, of degree at most $k\in\NN_0$ and coefficients in $\KK$. Via homogenization, $\KK[X_1,\ldots,X_n]_{\leq k}$ is isomorphic to the vector space $\S^k(\KK^{n+1})$ of \emph{homogeneous polynomials} (also called \emph{forms}) of degree $k$ in the $n+1$ variables $\bm X=(X_0,\ldots,X_n)$. Symmetric tensors of order $k$ and dimension $n+1$ can be identified with forms of degree $k$ in $n+1$ variables.

For a $\KK$-vector space $V$, its dimension is denoted by $\dim_\KK(V)$. The \emph{graded algebra} of forms in $n+1$ variables is denoted by:
$$ \S(\KK^{n+1})=\bigoplus_{k=0}^\infty \S^k(\KK^{n+1}) $$
The \emph{algebraic dual space} $(\S(\KK^{n+1}))^*$ is the space of linear functionals on $\S(\KK^{n+1})$. For $F\in (\S(\KK^{n+1}))^*$ and $p\in \S(\KK^{n+1})$, we define $p\star F$ as the linear functional $q\mapsto F(pq)$.

We use bold symbols for tuples, such as $\bm v=(v_i)_i$. The set of \emph{exponents} in $n+1$ variables and total degree $k$ is:
\begin{equation*}
    M_{k,n+1}=\Bigg\{\bm\alpha\in\NN_0^{n+1}:|\bm\alpha|=\sum_{i=0}^{n}\alpha_i=k\Bigg\}
\end{equation*}
We make use of the multinomial notation and define $\bm X^{\bm\alpha}:=\prod_{i=0}^nX_i^{\alpha_i}$. Furthermore, when ordering monomials, or exponents, we use the lexicographic order. We use analogous notation for powers of tuples. In particular, $\S^k(\KK^{n+1})=\mathrm{span}(\bm X^{\bm \alpha})_{\bm\alpha\in M_{k,n+1}}$. The set $\{\bm X^{\bm\alpha}\mid\bm\alpha\in M_{k,n+1}\}$ is the \emph{standard monomial basis}, with $\dim_\KK(\S^k(\KK^{n+1}))=:D(k,n+1)=\binom{n+k}{k}$. 

For an exponent $\bm \alpha=(\alpha_0,\ldots,\alpha_n)\in M_{k,n+1}$, we set $\bm \alpha!=\alpha_0!\cdots \alpha_n!$ and the multinomial coefficient $\binom{k}{\bm\alpha}=\frac{k!}{\alpha_0!\cdots\alpha_n!}$. For tuples $\bm v,\bm w\in\KK^{n+1}$, their \emph{Euclidean inner product} is $\langle \bm v,\bm w\rangle = \sum_{i=0}^nv_iw_i$. By analogy, we define $\langle \bm v,\bm X\rangle=\sum_{i=0}^nv_iX_i$. 

For $\Xi=\{\bm\xi_1,\ldots,\bm\xi_r\}\subset\PP^n(\KK)$, let $I(\Xi)=\{p\in\S(\KK^{n+1}):p(\bm\xi_i)=0 \text{ for } i=1,\ldots,r\}$ be the \emph{ideal of polynomials vanishing on $\Xi$}. For an ideal $I\lhd\S(\KK^{n+1})$, $I_l = I\cap \S^l(\KK^{n+1})$ is its degree $l$ \emph{homogeneous component}. If $I$ is homogeneous, the \emph{Hilbert function} of $I$ is defined by $H_I(l)=\dim_\KK(\S^l(\KK^{n+1})/I_l)$. For a polynomial $p(\bm X)\in\KK[X_0,\ldots,X_n]$, $\Z(p)=\{\bm x\in\KK^{n+1}\mid p(\bm x)=0\}$ is the \emph{zero-set} of $p$ in $\KK$.

A form $p\in\S^k(\KK^{n+1})$ is called \emph{generic} if its coefficient vector lies in a Zariski open dense subset of the affine space $\KK^{D(k,n+1)}$.

Given a linear map $L:V\to W$, its \emph{transpose} $L^*:W^*\to V^*;\;f\mapsto f\circ L$ is also known as the \emph{pullback of $f$ by $L$}. If $(V,\langle\cdot,\cdot\rangle_V)$ and $(W,\langle\cdot,\cdot\rangle_W)$ are finite-dimensional inner product spaces, the \emph{adjoint} of $L$ is the unique map $L^\dagger:W\to V$ such that $\langle L(v),w\rangle_W=\langle v,L^\dagger(w)\rangle_V$. 

For a matrix $A$, $A^\top $ denotes its transpose. For a tuple $\bm c=(c_0,\ldots,c_{l})\in\KK^{l+1}$, $\operatorname{Hankel}(\bm c)$ is the $(\lfloor l/2\rfloor+1)\times (\lceil l/2\rceil+1)$ \emph{Hankel matrix} with entries $c_{i+j}$. 

A subset $C$ of a finite-dimensional inner product space over $\RR$ is a \emph{convex cone} if it is closed under addition and multiplication by non-negative scalars. $C$ is a \emph{closed convex cone} if it is closed in the induced topology. The \emph{dual cone} is defined as $C^*=\{v\in V\mid\langle v,w\rangle\geq0 \text{ for all } w\in C\}$. Finally, the \emph{$n$-sphere} $\mathbb{S}^n$ is the set $\{\bm v\in\RR^{n+1}\mid\langle\bm v,\bm v\rangle=1\}$.

Throughout the paper, measures are assumed to be positive Borel measures. Let $\mu$ be a measure and $\bm\alpha\in\NN_0^n$. If the integral $m_{\bm\alpha}=\int X_1^{\alpha_1}\cdots X_n^{\alpha_n}\d\mu$ is finite, $m_{\bm\alpha}$ is called the $\bm\alpha\textsuperscript{th}$ moment of $\mu$. If all moments exist, the sequence $(m_{\bm\alpha})_{\bm\alpha\in\NN_0^n}$ is called the \emph{moment sequence} of $\mu$. For $k\in\NN$, $(m_{\bm\alpha})_{\bm\alpha\in\NN_0^n,|\bm\alpha|\leq k}$ is the \emph{truncated moment sequence} of $\mu$, up to degree $k$. For a (truncated) moment sequence $(m_{\bm\alpha})$, the linear functional $L_{(m_{\bm\alpha})}$ on the (truncated) polynomial ring $\RR[\bm X]$ given by $\bm X^{\bm\alpha}\mapsto m_{\bm\alpha}$ is called the \emph{moment functional} of $(m_{\bm\alpha})$. Let $\bm m=(m_{\bm\alpha})_{\bm\alpha\in\NN_0^n}$ be a multisequence and let $L_{\bm m}$ be the associated moment functional. The \emph{moment matrix} of $\bm m$ is the infinite matrix
\begin{equation*}
    M(\bm m):=\big(L_{\bm m}(\bm X^{\bm\alpha+\bm\beta})\big)_{\bm\alpha,\bm\beta\in\NN_0^n}=\big(m_{\bm\alpha+\bm\beta}\big)_{\bm\alpha,\bm\beta\in\NN_0^n}.
\end{equation*}
For $d\in\NN_0$, the \emph{truncated moment matrix of order $d$} is $M_d(\bm m):=\big(m_{\bm\alpha+\bm\beta}\big)_{|\bm\alpha|,|\bm\beta|\leq d}$ and is a principal submatrix of $M(\bm m)$. Thus, $M_d(\bm m)$ depends only on the moments of degree at most $2d$. In the univariate case, $M_d(\bm m)=\operatorname{Hankel}(m_0,\ldots,m_{2d})$. For a homogeneous truncated multisequence $\bm m=(m_{\bm\alpha})_{\bm\alpha\in M_{2d,n+1}}$, its associated homogeneous moment functional is the linear map
\begin{equation*}
    L_{\bm m}:\S^{2d}(\RR^{n+1})\longrightarrow\RR,\qquad L_{\bm m}\Big(\sum_{\bm\alpha\in M_{2d,n+1}}a_{\bm\alpha}\bm X^{\bm\alpha}\Big)=\sum_{\bm\alpha\in M_{2d,n+1}}a_{\bm\alpha}m_{\bm\alpha}.
\end{equation*}
The \emph{homogeneous moment matrix of order $d$} associated with $\bm m$ is $M_d^{\mathrm h}(\bm m)=\left(m_{\bm\alpha+\bm\beta}\right)_{\bm\alpha,\bm\beta\in M_{d,n+1}}$.
The \emph{Dirac measure} centered at a point $\bm\xi$ is the probability measure attaining $1$ at any measurable set containing $\bm\xi$ and $0$ otherwise.

\subsection{Waring decompositions}
For a non-zero form $p\in \S^k(\KK^{n+1})$, we consider the following decomposition problem: Find a positive integer $r$, called the \emph{length}, $\lambda_1,\ldots,\lambda_r\in\KK\setminus\{0\}$ called \emph{weights}, $\bm\xi_1,\ldots,\bm\xi_r\in\KK^{n+1}$ called \emph{nodes} corresponding to distinct points in the projective space $\PP^n(\KK)$, such that
\begin{equation}\label{WaringDecomposition}
    p=\sum_{i=1}^r\lambda_i\langle \bm\xi_i,\bm X\rangle^k:=\sum_{i=1}^r\lambda_i\left(\xi_{i,0} X_0 + \cdots + \xi_{i,n} X_n\right)^{k}.
\end{equation}
If $\bm\xi_1,\ldots,\bm\xi_r\in A$ for some subset $A\subseteq \KK^{n+1}$, $p$ \emph{has a Waring decomposition on} $A$.
Decompositions as in \cref{WaringDecomposition} always exist over a field $\KK$ of characteristic $0$ (for $\KK=\CC$ see e.g. \cite[Lemma 4.2]{comon_mourrain})\footnote{A modification of the proof of \cite[Lemma 4.2]{comon_mourrain} shows that Waring decompositions always exist also for $\KK=\RR$.} and are known as \emph{Waring decompositions} over $\KK$. The smallest $r\in\NN$ for which a Waring decomposition is achieved, is known as the \emph{Waring rank} of $p$ over $\KK$. Note that when $\KK=\CC$, then the Waring rank is the smallest possible.

\subsection{Apolarity}
Denote the dual basis of the monomial basis $\{\bm X^{\bm\alpha}\}_{\alpha\in M_{k,n+1}}\subset\S^k(\KK^{n+1})$ by $\{\bm Y^{\bm \alpha}\}_{\bm\alpha\in M_{k,n+1}}\subset(\S^k(\KK^{n+1}))^*$.
\begin{definition}
    For $p=\sum_{\bm\alpha\in M_{k,n+1}}p_{\bm\alpha}\bm X^{\bm\alpha},q=\sum_{\bm\alpha\in M_{k,n+1}}q_{\bm\alpha}\bm X^{\bm\alpha}\in\S^k(\KK^{n+1})$ we define the \emph{apolar product} of $p$ and $q$ as
    \begin{equation*}
        \langle p,q\rangle_k:=\sum_{\bm\alpha\in M_{k,n+1}}\binom{k}{\bm\alpha}^{-1}p_{\bm\alpha}q_{\bm\alpha}
    \end{equation*}
\end{definition}
Moreover, the \emph{apolarity map} $A_k$ is defined as the $\KK$-linear isomorphism
\begin{equation}\label{ApolarityMap}
        \begin{aligned}
            A_{k}:\S^{k}(\KK^{n+1})&\longrightarrow (\S^{k}(\KK^{n+1}))^*\\
            p&\longmapsto \langle p,\cdot\rangle_k
        \end{aligned}
\end{equation}
Note that for all $\bm\alpha\in M_{k,n+1}$, $A_k(\bm X^{\bm\alpha})= \binom{k}{\bm\alpha}^{-1}(\bm Y^{\bm\alpha})$.
\begin{definition}[Catalecticant maps]\label{CatalecticantMaps}
    For $0\leq l\leq k$ and $p\in\S^k(\KK^{n+1})$ we let $C_p^l$ be the $\KK$-linear map defined as
    \begin{equation*}
        \begin{aligned}
            C_p^{l}:\S^{k-l}(\KK^{n+1})&\longrightarrow (\S^{l}(\KK^{n+1}))^*\\
            f&\longmapsto f\star A_k(p)
        \end{aligned}
    \end{equation*}
    The \emph{$l\textsuperscript{th} $catalecticant matrix of $p$} is the matrix associated to $C_p^{l}$ with respect to the bases \linebreak $\{\bm X^{\bm \alpha}\}_{\bm\alpha\in M_{k-l,n+1}}$ of $\S^{k-l}(\KK^{n+1})$ and $\{\bm Y^{\bm \beta}\}_{\bm\beta\in M_{l,n+1}}$ of $(\S^{l}(\KK^{n+1}))^*$.
\end{definition}
We also denote the catalecticant matrix by $C_p^{l}$; it will be clear from the context when we refer to the linear map or the matrix. For $p\in\S^k(\KK^{n+1})$, the \emph{apolar ideal} or \emph{annihilator} of $p$ is $p^\perp=\{f\in\S(\KK^{n+1}):f\star A_k(p)=0\}$ and is, in fact, an ideal of $\S(\KK^{n+1})$.
\begin{remark}\label{rem:catalecticantproperties}
We observe the following properties of the catalecticant maps:
\begin{enumerate}
    \item Note that $C_p^l$ is a square matrix if and only if $k$ is even and $l=\frac{k}{2}$. By convention, in general we shall call the \emph{most nearly square catalecticant matrix} the one obtained for $l=\lfloor \frac{k}{2}\rfloor$. Moreover, for all $0\leq l\leq k$, the transpose matrix of $C_p^{l}$ is $C_p^{k-l}$.
    \item
    By \cref{CatalecticantMaps} it follows that $\ker(C_p^{l})$ consists of degree $k-l$ forms annihilating $p$, i.e. $\ker(C_p^{l})=(p^\perp)_{k-l}$, thus $p^\perp$ can be studied through the catalecticant maps.
\end{enumerate}
    
\end{remark}
The main ingredient to study Waring decompositions is
\begin{lemma}[Apolarity Lemma]\label{ApolarityLemma}
        Let $p\in\S^k(\CC^{n+1})$. Then $p=\sum_{i=1}^r\lambda_i\langle \bm\xi_i,\bm X\rangle^k$ if and only if $I(\{\bm\xi_1,\ldots,\bm\xi_r\})\subseteq p^\perp$, for $r\in\NN,\,\lambda_1,\ldots,\lambda_r\in\CC\setminus\{0\},\,\bm\xi_1,\ldots,\bm\xi_r\in\CC^{n+1}$ corresponding to different points in $\PP^n(\CC)$.
\end{lemma}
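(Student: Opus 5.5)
The plan is to reduce everything to one identity: the apolar dual of a $k$-th power of a linear form is evaluation at the corresponding point. Take $\bm\xi\in\CC^{n+1}$ and $f=\sum_{\bm\alpha}f_{\bm\alpha}\bm X^{\bm\alpha}\in\S^k(\CC^{n+1})$. By the multinomial theorem, $\langle\bm\xi,\bm X\rangle^k=\sum_{\bm\alpha\in M_{k,n+1}}\binom{k}{\bm\alpha}\bm\xi^{\bm\alpha}\bm X^{\bm\alpha}$. The factor $\binom{k}{\bm\alpha}$ cancels the $\binom{k}{\bm\alpha}^{-1}$ in the apolar product, so
\begin{equation*}
A_k\big(\langle\bm\xi,\bm X\rangle^k\big)(f)=\sum_{\bm\alpha}f_{\bm\alpha}\bm\xi^{\bm\alpha}=f(\bm\xi).
\end{equation*}
That is, $A_k(\langle\bm\xi,\bm X\rangle^k)=\mathrm{ev}_{\bm\xi}$ restricted to $\S^k(\CC^{n+1})$. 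Since $A_k$ is a linear isomorphism, $p=\sum_i\lambda_i\langle\bm\xi_i,\bm X\rangle^k$ holds if and only if $A_k(p)=\sum_i\lambda_i\mathrm{ev}_{\bm\xi_i}$ on $\S^k(\CC^{n+1})$.

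\emph{Forward direction.} Assume the decomposition holds and let $f\in I(\{\bm\xi_1,\ldots,\bm\xi_r\})$ be homogeneous of degree $j$. If $j>k$, then $f\star A_k(p)=0$ trivially, because $A_k(p)$ only sees degree $k$. If $j\le k$ and $g\in\S^{k-j}(\CC^{n+1})$, then
\begin{equation*}
(f\star A_k(p))(g)=\sum_i\lambda_i f(\bm\xi_i)g(\bm\xi_i)=0.
\end{equation*}
Hence $f\in p^\perp$. Both ideals are homogeneous ($I(\Xi)$ because the $\bm\xi_i$ are projective points), so checking homogeneous elements suffices and $I(\Xi)\subseteq p^\perp$.

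\emph{Converse.} Assume $I(\Xi)\subseteq p^\perp$. Only the degree-$k$ part is needed. For $f\in I(\Xi)_k$ we have $f\star A_k(p)=0$ as a functional on $\S^0$, and evaluating at $1$ gives $A_k(p)(f)=0$. So $A_k(p)$ vanishes on $I(\Xi)_k$. Now consider the linear map
\begin{equation*}
E:\S^k(\CC^{n+1})\to\CC^r,\qquad f\mapsto\big(f(\bm\xi_1),\ldots,f(\bm\xi_r)\big),
\end{equation*}
whose kernel is exactly $I(\Xi)_k$. By finite-dimensional linear algebra, the annihilator of $\ker E$ equals the image of the transpose $E^*$. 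That image is the span of $\mathrm{ev}_{\bm\xi_1},\ldots,\mathrm{ev}_{\bm\xi_r}$, so $A_k(p)=\sum_i\lambda_i\mathrm{ev}_{\bm\xi_i}$ for some $\lambda_i\in\CC$. Applying $A_k^{-1}$ and the identity above gives $p=\sum_i\lambda_i\langle\bm\xi_i,\bm X\rangle^k$.

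The only delicate point is the nonzero-weight requirement in the converse. The argument produces $\lambda_i\in\CC$, some of which may vanish, and this cannot be avoided: adding an extra point $\bm\xi_{r+1}$ to a valid $\Xi$ shrinks $I(\Xi)$, so the inclusion $I(\Xi)\subseteq p^\perp$ still holds, yet $\bm\xi_{r+1}$ need not carry a nonzero weight. So I would state the converse in one of two equivalent ways. Either the inclusion yields a decomposition supported on a subset of $\Xi$, obtained by discarding the points with $\lambda_i=0$. Or the lemma is read with $\lambda_i\in\CC$ allowed to be zero, which is how it is used for bounding rank. Distinctness of the projective points is not needed for the argument itself; it only ensures the decomposition is written with $r$ genuinely different nodes.
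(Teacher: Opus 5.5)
Your proof is correct and is essentially the standard argument behind \cite[Lemma 1.15]{IK}, which the paper cites instead of giving a proof. The apolar product identifies $\langle\bm\xi,\bm X\rangle^k$ with evaluation at $\bm\xi$, and the converse is the degree-$k$ duality between $I(\Xi)_k$ and $\mathrm{span}(\mathrm{ev}_{\bm\xi_1},\ldots,\mathrm{ev}_{\bm\xi_r})$.

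Your caveat about the weights is also right. If all $\lambda_i\neq0$ is imposed, the converse is false: take $p=X_0^k$ with $\Xi$ consisting of $[1:0:\cdots:0]$ and $[0:1:0:\cdots:0]$. So the lemma must be read with $\lambda_i\in\CC$ allowed to vanish, or equivalently as giving a decomposition supported on a subset of $\Xi$.
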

A proof of the \cref{ApolarityLemma} can be found in \cite[Lemma 1.15]{IK}.

\section{\texorpdfstring{$\bm q$}{q}-Symmetric tensors}\label{Sec3qSymForms}
Throughout this section, let $m,n,k,h$ be positive integers, $s=D(k,\allowbreak n+1)$ and $t=D(hk,m+1)$.

\begin{definition}[Weight function]\label{DefWeightFunction}
    Let ${\bm q}=(q_0(\bm Z),\ldots,q_n(\bm Z))$,  with $q_i\in\S^h(\KK^{m+1})$, be a tuple of forms. The linear map
    \begin{equation}\label{WeightFunction}
        \begin{aligned}
            W_{\bm q}:\S^k(\KK^{n+1})&\longrightarrow \S^{hk}(\KK^{m+1})\\
            \bm X^{\bm \alpha}&\longmapsto W_{\bm q}(\bm X^{\bm \alpha}):=\bm q(\bm Z)^{\bm\alpha}=\prod_{i=0}^n q_i(\bm Z)^{\alpha_i}
        \end{aligned}
    \end{equation}
    for $\bm\alpha\in M_{k,n+1}$, is called \emph{weight function}. We denote by $\mathrm{Im}(W_{\bm q})=\mathrm{span}(W_{{\bm q}}(\bm X^{\bm\alpha})\mid\bm\alpha\in M_{k,n+1})=\mathrm{span}(\bm q^{\bm\alpha}\mid\bm\alpha\in M_{k,n+1})$ the image of $W_{\bm q}$.
Let $\W=(W_{\bm\gamma_i,\bm\alpha_j})_{1\leq i\leq t,1\leq j\leq s}$ be the matrix associated to $W_{\bm q}$ with respect to the monomial bases, i.e. $W_{\bm q}(\bm X^{\bm\alpha})=\sum_{\bm\gamma\in M_{hk,m+1}}\!\!W_{\bm\gamma,\bm\alpha} \bm Z^{\bm\gamma}$. Furthermore, we consider the following diagram
\begin{equation}\label{Phi}
\begin{tikzcd}[column sep=large, row sep=small]
\S^{hk}(\KK^{m+1})
  \arrow[r, "A_{hk}"]
& (\S^{hk}(\KK^{m+1}))^*
  \arrow[r, "W_{\bm q}^*"]
& (\S^{k}(\KK^{n+1}))^*
  \arrow[r, "A_k^{-1}"]
& \S^{k}(\KK^{n+1})
\end{tikzcd}
\end{equation}
and define $\phi_{\bm q}=A_{k}^{-1}\circ W_{\bm q}^*\circ A_{hk}$.
\end{definition}

\begin{proposition}\label{PhiAdjointW}
    The map $\phi_{\bm q}$ coincides with the adjoint $W_{\bm q}^\dagger$ of $W_{\bm q}$ with respect to the apolar products $\langle\cdot,\cdot\rangle_{hk}$ on $\S^{hk}(\KK^{m+1})$ and $\langle\cdot,\cdot\rangle_{k}$ on $\S^{k}(\KK^{n+1})$, i.e. for all $p\in\S^k(\KK^{n+1}),\,p'\in\S^{hk}(\KK^{m+1})$ the equality
    \begin{equation*}
        \langle W_{\bm q}(p),p'\rangle_{hk}=\langle p,\phi_{\bm q}(p')\rangle_k
    \end{equation*}
    holds.
\end{proposition}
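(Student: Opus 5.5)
The claim follows by unwinding the definition $\phi_{\bm q}=A_k^{-1}\circ W_{\bm q}^*\circ A_{hk}$ and using that each apolarity map $A_k$ is, by construction, the map $p\mapsto\langle p,\cdot\rangle_k$. No computation in coordinates should be needed, only the defining identities of the three maps in diagram \cref{Phi}.

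Fix $p\in\S^k(\KK^{n+1})$ and $p'\in\S^{hk}(\KK^{m+1})$. First, by the definition \cref{ApolarityMap} of $A_{hk}$ and the symmetry of the apolar product (its defining sum is visibly symmetric in the two arguments), we have $\langle W_{\bm q}(p),p'\rangle_{hk}=A_{hk}(p')\big(W_{\bm q}(p)\big)$. By the definition of the transpose, $A_{hk}(p')\circ W_{\bm q}=W_{\bm q}^*\big(A_{hk}(p')\big)$, so this equals $W_{\bm q}^*(A_{hk}(p'))(p)$. Now $W_{\bm q}^*(A_{hk}(p'))$ lies in $(\S^k(\KK^{n+1}))^*$. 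Since $A_k$ is an isomorphism, we may write it as $A_k(g)$ with $g=A_k^{-1}W_{\bm q}^*A_{hk}(p')=\phi_{\bm q}(p')$. Hence the expression equals $A_k(\phi_{\bm q}(p'))(p)=\langle\phi_{\bm q}(p'),p\rangle_k=\langle p,\phi_{\bm q}(p')\rangle_k$, again by symmetry.

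Finally, to conclude $\phi_{\bm q}=W_{\bm q}^\dagger$, I would note that the adjoint is unique. The apolar products are nondegenerate, because they are diagonal in the monomial bases with nonzero coefficients $\binom{k}{\bm\alpha}^{-1}$. So any map satisfying the identity for all $p,p'$ is the adjoint.

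The only subtle point is that over $\KK=\CC$ the apolar product is bilinear rather than Hermitian. The "adjoint" must therefore be read with respect to these nondegenerate symmetric bilinear forms; uniqueness still holds by nondegeneracy, and over $\RR$ they are genuine inner products. Otherwise the argument is a direct chase through the definitions.
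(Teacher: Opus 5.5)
Your argument is correct, but it takes a different route from the paper. The paper works in coordinates. It expands $\langle W_{\bm q}(p),p'\rangle_{hk}$ in the monomial bases and reads off an explicit formula for $W_{\bm q}^\dagger(p')$ in terms of the entries $W_{\bm\gamma,\bm\alpha}$. It then computes $A_k^{-1}W_{\bm q}^*A_{hk}(p')$ separately in the same coordinates and observes that the two expressions agree. You instead chase the definitions,
$\langle W_{\bm q}(p),p'\rangle_{hk}=A_{hk}(p')(W_{\bm q}(p))=W_{\bm q}^*(A_{hk}(p'))(p)=A_k(\phi_{\bm q}(p'))(p)=\langle p,\phi_{\bm q}(p')\rangle_k$,
using only that the apolar products are symmetric and nondegenerate.

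Your version is shorter. It also shows the statement is a formal consequence of the diagram: it holds for any linear map between spaces carrying nondegenerate symmetric bilinear forms, with the induced isomorphisms in place of $A_k$ and $A_{hk}$, whatever the multinomial weights. Your remark on $\KK=\CC$ is a genuine point of rigor the paper glosses over by calling these ``inner product spaces'': the apolar product is bilinear rather than Hermitian, so the adjoint must be understood relative to a nondegenerate symmetric bilinear form.

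What the paper's computation buys is the explicit coordinate formula \cref{ExplicitPsi}. It is reused for the basis elements $b_{\bm\gamma}$, in the proof of \cref{QSymMainTheorem}, and as the matrix $\mathcal{A}_k^{-1}\mathcal{W}^\top\mathcal{A}_{hk}$ in \cref{AlgQSym}. With your approach that formula must be derived separately, though it follows at once by writing $A_k$, $A_{hk}$ and $W_{\bm q}^*$ as matrices in the monomial and dual bases.
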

\begin{proof}
    We first compute the explicit expression for the adjoint $W_{\bm q}^\dagger$ of $W_{\bm q}$.\\
    Let $p=\sum_{\bm\alpha\in M_{k,n+1}}p_{\bm\alpha}\bm X^{\bm\alpha}\in\S^k(\KK^{n+1})$ and $p'=\sum_{\bm\gamma\in M_{hk,m+1}}p'_{\bm\gamma}\bm Z^{\bm\gamma}\in\S^{hk}(\KK^{m+1})$. Substituting
    \begin{equation*}
        W_{\bm q}(p)=\sum_{\bm\gamma\in M_{hk,m+1}}\bm Z^{\bm\gamma}\sum_{\bm\alpha\in M_{k,n+1}}p_{\bm\alpha}W_{\bm\gamma,\bm\alpha},
    \end{equation*}
    into the inner product $\langle W_{\bm q}(p),p'\rangle_{hk}$ we obtain
    \begin{align*}
        \langle W_{\bm q}(p),p'\rangle_{hk}&=\sum_{\bm\gamma\in M_{hk,m+1}}\binom{hk}{\bm\gamma}^{-1}p'_{\bm\gamma}\sum_{\bm\alpha\in M_{k,n+1}}p_{\bm\alpha}W_{\bm\gamma,\bm\alpha}
        =\sum_{\bm\alpha\in M_{k,n+1}}p_{\bm\alpha}\sum_{\bm\gamma\in M_{hk,m+1}}\binom{hk}{\bm\gamma}^{-1}p'_{\bm\gamma}W_{\bm\gamma,\bm\alpha}\\
        &=\sum_{\bm\alpha\in M_{k,n+1}}\binom{k}{\bm\alpha}^{-1}p_{\bm\alpha}\left(\binom{k}{\bm\alpha}\sum_{\bm\gamma\in M_{hk,m+1}}\binom{hk}{\bm\gamma}^{-1}p'_{\bm\gamma}W_{\bm\gamma,\bm\alpha}\right)\\
        &=\left\langle p,\sum_{\bm\alpha\in M_{k,n+1}}\binom{k}{\bm\alpha}\sum_{\bm\gamma\in M_{hk,m+1}}\binom{hk}{\bm\gamma}^{-1}p'_{\bm\gamma}W_{\bm\gamma,\bm\alpha}\bm X^{\bm\alpha}\right\rangle_k
    \end{align*}
    Hence, we deduce that
    \begin{equation}\label{AdjointOfW}
        W_{\bm q}^\dagger(p')=\sum_{\bm\alpha\in M_{k,n+1}}\binom{k}{\bm\alpha}\sum_{\bm\gamma\in M_{hk,m+1}}\binom{hk}{\bm\gamma}^{-1}p'_{\bm\gamma}W_{\bm\gamma,\bm\alpha}\bm X^{\bm\alpha}.
    \end{equation}
    On the other hand, recall that $A_{hk}(p')=\langle p',\cdot\rangle_{hk}$. Therefore, $W_{\bm q}^*(A_{hk}(p'))=\langle p',W_{\bm q}(\cdot)\rangle_{hk}$ is the linear functional defined by
    \begin{equation*}
        p=\sum_{\bm\alpha\in M_{k,n+1}}p_{\bm\alpha}\bm X^{\bm\alpha}\mapsto\langle p',W_{\bm q}(p)\rangle_{hk}=\sum_{\bm\gamma\in M_{hk,m+1}}\binom{hk}{\bm\gamma}^{-1}p'_{\bm\gamma}\sum_{\bm\alpha\in M_{k,n+1}}p_{\bm\alpha} W_{\bm\gamma,\bm\alpha}.
    \end{equation*}
    Applying $A_k^{-1}$ yields
    \begin{equation}\label{ExplicitPsi}
        \phi_{\bm q}(p')=A_k^{-1}(W_{\bm q}^*(A_{hk}(p')))=\sum_{\bm\alpha\in M_{k,n+1}}\binom{k}{\bm\alpha}\sum_{\bm\gamma\in M_{hk,m+1}}\binom{hk}{\bm\gamma}^{-1}p'_{\bm\gamma}W_{\bm\gamma,\bm\alpha}\bm X^{\bm\alpha}
    \end{equation}
    Comparing \cref{ExplicitPsi,AdjointOfW}, we conclude that $\phi_{\bm q}=W_{\bm q}^\dagger$.
\end{proof}
As a direct consequence of \cref{PhiAdjointW}, we obtain the following corollary.
\begin{corollary}\label{WSurjPhiInj}
The map $W_{\bm q}$ is surjective if and only if the map $\phi_{\bm q}$ is injective.
\end{corollary}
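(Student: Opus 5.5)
The plan is to deduce the statement from \cref{PhiAdjointW} using the standard linear-algebra fact that, for a linear map between finite-dimensional inner product spaces, the kernel of the adjoint is the orthogonal complement of the image. First I check that the apolar products $\langle\cdot,\cdot\rangle_k$ and $\langle\cdot,\cdot\rangle_{hk}$ are non-degenerate symmetric bilinear forms. In the monomial bases each one is diagonal with nonzero entries $\binom{k}{\bm\alpha}^{-1}$, respectively $\binom{hk}{\bm\gamma}^{-1}$. This holds over both $\KK=\RR$ and $\KK=\CC$. Note that over $\CC$ these forms are bilinear, not Hermitian, so I will use only non-degeneracy and never positivity.

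Next I show $\ker(\phi_{\bm q})=\mathrm{Im}(W_{\bm q})^{\perp}$, where $\perp$ is taken with respect to $\langle\cdot,\cdot\rangle_{hk}$. Take $p'\in\S^{hk}(\KK^{m+1})$. By \cref{PhiAdjointW}, $\langle W_{\bm q}(p),p'\rangle_{hk}=\langle p,\phi_{\bm q}(p')\rangle_k$ for every $p\in\S^k(\KK^{n+1})$. Because $\langle\cdot,\cdot\rangle_k$ is non-degenerate, $\phi_{\bm q}(p')=0$ holds exactly when the right-hand side vanishes for all $p$. That in turn holds exactly when $p'$ is orthogonal to every $W_{\bm q}(p)$, i.e. $p'\in\mathrm{Im}(W_{\bm q})^\perp$.

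Then I use that non-degeneracy of $\langle\cdot,\cdot\rangle_{hk}$ on the finite-dimensional space $\S^{hk}(\KK^{m+1})$ gives $\dim U^\perp=t-\dim U$ for every subspace $U$. Hence $\mathrm{Im}(W_{\bm q})^\perp=0$ if and only if $\mathrm{Im}(W_{\bm q})=\S^{hk}(\KK^{m+1})$. Combining this with the kernel identity, $\phi_{\bm q}$ is injective iff $\ker\phi_{\bm q}=0$ iff $W_{\bm q}$ is surjective.

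As an alternative route, one can argue in matrices. $W_{\bm q}^*$ is injective iff $W_{\bm q}$ is surjective (the matrix $\W^\top$ has full column rank iff $\W$ has full row rank), and $A_k^{-1}$, $A_{hk}$ are isomorphisms, so the factorization $\phi_{\bm q}=A_k^{-1}\circ W_{\bm q}^*\circ A_{hk}$ gives the claim directly. There is no real obstacle here. The only point needing care is the dimension count for orthogonal complements in the complex bilinear case, and non-degeneracy suffices for it.
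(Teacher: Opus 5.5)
Your proof is correct and follows the paper's route: the paper simply invokes \cref{PhiAdjointW} together with the standard fact that the kernel of an adjoint is the orthogonal complement of the image. Your argument adds a detail the paper leaves implicit, namely that the complex apolar product is bilinear, so only its non-degeneracy is used. Your alternative via the factorization $\phi_{\bm q}=A_k^{-1}\circ W_{\bm q}^*\circ A_{hk}$ is also valid.
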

\begin{proof}
    The statement follows from \cref{PhiAdjointW} and the properties of adjoint linear operators on finite-dimensional inner product spaces.
\end{proof}

\begin{definition}[${\bm q}$-Symmetric tensors]\label{def:q-symm}
Let ${\bm q}=(q_0(\bm Z),\ldots,q_n(\bm Z))$,  with $q_i\in\S^h(\KK^{m+1})$, be a tuple of forms and let $\phi_{\bm q}$ be as in \cref{Phi}. The subspace of $\S^k(\KK^{n+1})$ of \emph{${\bm q}$-Symmetric tensors} of degree $k$ in the $n+1$ variables $\bm X=(X_0,\ldots,X_n)$ is defined as ${\bm q}$-$\Sym_{k,n+1}:=\mathrm{Im}(\phi_{\bm q})$.
\end{definition}

\begin{remark}
    Since, by \cref{PhiAdjointW}, $\phi_{\bm q}=W_{\bm q}^\dagger$, we compute
    \begin{equation*}
        \dim({\bm q}\text{-}\Sym_{k,n+1})=\dim(\mathrm{Im}(\phi_{\bm q}))=\dim(\mathrm{Im}(W_{\bm q}^\dagger))=\dim(\mathrm{Im}(W_{\bm q})).
    \end{equation*}
\end{remark}

\begin{proposition}
    If $W_{\bm q}$ is surjective, then $\S^{hk}(\KK^{m+1})\xrightarrow{\phi_{\bm q}}{\bm q}$-$\Sym_{k,n+1}$ is an isomorphism. Moreover, a basis of ${\bm q}$-$\Sym_{k,n+1}$ is given by
    \begin{equation*}
        \B:=\left\{b_{\bm\gamma}:=\phi_{\bm q}(\bm Z^{\bm\gamma})\in\S^k(\KK^{n+1})\mid\bm\gamma\in M_{hk,m+1}\right\}
    \end{equation*}
    where each basis element $b_{\bm\gamma}$ has the following expression.
    \begin{equation*}
        b_{\bm\gamma}=\binom{hk}{\bm\gamma}^{-1}\sum_{\bm\alpha\in M_{k,n+1}}\binom{k}{\bm\alpha}W_{\bm\gamma,\bm\alpha}\bm X^{\bm\alpha}.
    \end{equation*}
\end{proposition}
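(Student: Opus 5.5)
The argument has two parts: injectivity of $\phi_{\bm q}$, which gives the isomorphism, and then reading off $b_{\bm\gamma}$ from the explicit formula \cref{ExplicitPsi}.

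\textbf{Isomorphism.} By \cref{WSurjPhiInj}, surjectivity of $W_{\bm q}$ is equivalent to injectivity of $\phi_{\bm q}$. By \cref{def:q-symm}, ${\bm q}$-$\Sym_{k,n+1}=\mathrm{Im}(\phi_{\bm q})$. Hence, when the codomain is restricted to ${\bm q}$-$\Sym_{k,n+1}$, the map
\[
\phi_{\bm q}\colon \S^{hk}(\KK^{m+1})\to{\bm q}\text{-}\Sym_{k,n+1}
\]
is surjective by definition and injective by the corollary, so it is a linear isomorphism. An isomorphism sends a basis to a basis. Applying this to the standard monomial basis $\{\bm Z^{\bm\gamma}\}_{\bm\gamma\in M_{hk,m+1}}$ shows that $\B$ is a basis of ${\bm q}$-$\Sym_{k,n+1}$. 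Its elements are pairwise distinct and it has $t=D(hk,m+1)$ elements.

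\textbf{Formula for $b_{\bm\gamma}$.} Substitute $p'=\bm Z^{\bm\gamma}$ into \cref{ExplicitPsi}, which is already established in the proof of \cref{PhiAdjointW}. The coefficient vector of this $p'$ is $p'_{\bm\gamma'}=\delta_{\bm\gamma',\bm\gamma}$, so the inner sum over $\bm\gamma'\in M_{hk,m+1}$ collapses to the single term $\binom{hk}{\bm\gamma}^{-1}W_{\bm\gamma,\bm\alpha}$. Pulling the factor $\binom{hk}{\bm\gamma}^{-1}$, which does not depend on $\bm\alpha$, out of the sum over $\bm\alpha$ gives exactly the claimed expression for $b_{\bm\gamma}$.

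\textbf{Main obstacle.} There is no real difficulty beyond the earlier results. The only point needing care is that "isomorphism" refers to the corestriction of $\phi_{\bm q}$ to its image; injectivity is the sole nontrivial input, and it is supplied by \cref{WSurjPhiInj}.
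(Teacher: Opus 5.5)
Your proof is correct and follows the paper's argument. Injectivity of $\phi_{\bm q}$ from \cref{WSurjPhiInj}, together with ${\bm q}$-$\Sym_{k,n+1}=\mathrm{Im}(\phi_{\bm q})$, gives the isomorphism, and the formula for $b_{\bm\gamma}$ comes from evaluating $A_k^{-1}\circ W_{\bm q}^*\circ A_{hk}$ at $\bm Z^{\bm\gamma}$. The only cosmetic difference is that the paper recomputes the functional $W_{\bm q}^*(A_{hk}(\bm Z^{\bm\gamma}))$ directly, whereas you specialize the general formula \cref{ExplicitPsi}, which amounts to the same computation.
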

\begin{proof}
    Since $W_{\bm q}$ is surjective, $\phi_{\bm q}$ is injective by \cref{WSurjPhiInj}, so $\S^{hk}(\KK^{m+1})\xrightarrow{\phi_{\bm q}}{\bm q}$-$\Sym_{k,n+1}$ is an isomorphism.
    To derive the explicit formula for $b_{\bm\gamma}$, let $\bm\gamma\in M_{hk,m+1}$. A direct computation shows that $W_{\bm q}^*(A_{hk}(\bm Z^{\bm\gamma}))=\langle \bm Z^{\bm\gamma},W_{\bm q}(\cdot)\rangle_{hk}$ is the linear functional on $\S^{k}(\KK^{n+1})$ given by
    \begin{equation*}
        p=\sum_{\bm\alpha\in M_{k,n+1}}p_{\bm\alpha}\bm X^{\bm\alpha}\mapsto\langle \bm Z^{\bm\gamma},W_{\bm q}(p)\rangle_{hk}=\binom{hk}{\bm\gamma}^{-1}\sum_{\bm\alpha\in M_{k,n+1}}p_{\bm\alpha} W_{\bm\gamma,\bm\alpha}.
    \end{equation*}
    Applying $A_k^{-1}$ yields the claimed expression for $b_{\bm\gamma}$.
\end{proof}

\begin{notation}
    Assume that $\phi_{\bm q}$ is injective (equivalently, $W_{\bm q}$ is surjective). We denote by $\psi_{\bm q}:{\bm q}$-$\Sym_{k,n+1}\to\S^{hk}(\KK^{m+1})$ the inverse map.
\end{notation}

The map $\psi_{\bm q}$ serves as the main tool to study the decomposition properties of ${\bm q}$-Symmetric tensors, as presented in \cref{QSymDecSubsection}.

\begin{remark}
    Using the apolarity map $A_k$ \cref{ApolarityMap}, one sees that ${\bm q}$-$\Sym_{k,n+1}^*=A_k({\bm q}$-$\Sym_{k,n+1})$.
\end{remark}
\subsection{Surjectivity of \texorpdfstring{$W_{\bm q}$}{q}}

Let ${\bm q}=(q_0,\ldots,q_n)$ be as in \cref{WeightFunction}. Since ${\bm q}$ consists of forms, we can consider the rational mapping
\begin{equation}\label{qFunction}
        \begin{aligned}
            {\bm q}:\PP^{m}(\KK)&\dashrightarrow \PP^{n}(\KK)\\
            [\bm Z]&\longmapsto [q_0(\bm Z):\cdots:q_n(\bm Z)]
        \end{aligned}
\end{equation}
and define the variety $V_{\bm q}:=\overline{\mathrm{Im}({\bm q})}$ as the Zariski closure of the image of ${\bm q}$.
The problem of finding a set ${\bm q}$ such that $W_{\bm q}$ is surjective can be reformulated in geometric terms:
\begin{proposition}\label{HilbertFuncCharacterization}
    Let ${\bm q}$ and $W_{\bm q}$ be as in \cref{WeightFunction}. Let
    \begin{equation*}
        I=\left\{p(X_0,\ldots,X_n)\in\S(\KK^{n+1})\mid p(q_0,\ldots,q_n)=0\right\}\lhd\S(\KK^{n+1})
    \end{equation*}
    be the homogeneous ideal of polynomials vanishing on the variety $V_{\bm q}$. Then $W_{\bm q}$ is surjective if and only if the Hilbert function $H_I$ of $I$ satisfies $H_I(k)=D(hk,m+1)=\binom{hk+m}{hk}$.
\end{proposition}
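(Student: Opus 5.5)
The plan is to identify the kernel of $W_{\bm q}$ with the degree-$k$ component $I_k$ and then count dimensions. By \cref{WeightFunction}, $W_{\bm q}$ is the substitution map $p(X_0,\ldots,X_n)\mapsto p(q_0(\bm Z),\ldots,q_n(\bm Z))$, extended linearly from the monomials. So for $p\in\S^k(\KK^{n+1})$ we have $W_{\bm q}(p)=0$ if and only if $p(q_0,\ldots,q_n)=0$ as a polynomial in $\bm Z$. This says exactly that $p\in I\cap\S^k(\KK^{n+1})=I_k$, hence
\begin{equation*}
\ker(W_{\bm q})=I_k .
\end{equation*}

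Next I would apply rank--nullity to $W_{\bm q}:\S^k(\KK^{n+1})\to\S^{hk}(\KK^{m+1})$. It gives
\begin{equation*}
\dim_\KK \mathrm{Im}(W_{\bm q})=\dim_\KK \S^k(\KK^{n+1})-\dim_\KK I_k=\dim_\KK\big(\S^k(\KK^{n+1})/I_k\big)=H_I(k).
\end{equation*}
Since the codomain has dimension $D(hk,m+1)=\binom{hk+m}{hk}$, the map $W_{\bm q}$ is surjective if and only if $\dim\mathrm{Im}(W_{\bm q})=D(hk,m+1)$, that is, if and only if $H_I(k)=D(hk,m+1)$. This proves the claim.

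The one point that needs care is the identification of $I$, defined through the substitution $p(q_0,\ldots,q_n)=0$, with the vanishing ideal of $V_{\bm q}$. For the proof only the substitution description is used, so nothing beyond the first step is needed. To justify the phrase "vanishing on $V_{\bm q}$", I would note the following. Over $\CC$, or more generally when the image of $\bm q$ is Zariski dense in $V_{\bm q}$ and $\KK$ is infinite, a form vanishes on $\mathrm{Im}(\bm q)$ if and only if $p\circ\bm q$ vanishes on a dense open subset of $\PP^m(\KK)$. That happens if and only if $p\circ\bm q$ is the zero polynomial, because a polynomial vanishing on $\KK^{m+1}$ for infinite $\KK$ is zero. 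Vanishing on $\mathrm{Im}(\bm q)$ is in turn equivalent to vanishing on its Zariski closure $V_{\bm q}$.

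It remains to check that $I$ is a homogeneous ideal, so that $H_I$ is defined as in the preliminaries. The substitution map is a ring homomorphism, so its kernel $I$ is an ideal. Substitution of degree-$h$ forms sends $\S^k(\KK^{n+1})$ into $\S^{hk}(\KK^{m+1})$, so it respects the grading, and therefore its kernel is homogeneous.
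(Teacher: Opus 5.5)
Your proposal is correct and follows the paper's own argument: view $W_{\bm q}$ as the restriction of the graded substitution homomorphism, identify its kernel in degree $k$ with $I_k$, and conclude by the dimension count $\dim\mathrm{Im}(W_{\bm q})=H_I(k)$ compared against $D(hk,m+1)$. Your extra remarks on the homogeneity of $I$ and on why $I$ is the vanishing ideal of $V_{\bm q}$ are sound but not needed. In that second remark, the hypothesis that $\mathrm{Im}(\bm q)$ is dense in $V_{\bm q}$ holds automatically by definition.
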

\begin{proof}
    Consider the substitution map induced by
    \begin{equation*}
            \begin{aligned}
                W_{\bm q}:\S(\KK^{n+1})&\longrightarrow \S(\KK^{m+1})\\
                X_i&\longmapsto q_i(\bm Z)
            \end{aligned}
    \end{equation*}
    and note that $I=\ker(W_{\bm q})$ and that $W_{\bm q}\restriction_{\S^k(\KK^{n+1})}$ coincides with the previously defined $W_{\bm q}$ in \cref{WeightFunction}. Therefore, $\S^k(\KK^{n+1})/I_k\cong \mathrm{Im}(W_{\bm q})=\mathrm{span}(\bm q^{\bm\alpha}\mid \bm\alpha\in M_{k,n+1})$ and the statement follows by dimensional count, given that $H_I(k)=\dim_\KK(\S^k(\KK^{n+1})/I_k)$.
\end{proof}

\begin{proposition}\label{NoBasePoints}
    Let ${\bm q}=(q_0,\ldots,q_n)$ with $q_i\in\S^h(\KK^{m+1})$, $k$ a positive integer and $W_{\bm q}$ be as in \cref{WeightFunction}. If $W_{\bm q}$ is surjective, then
    \begin{enumerate}
        \item The base locus of $\bm{q}$, i.e. the projective variety defined by $(q_0,\ldots,q_n)$, is empty.\label{item1TechnicalProp}
        \item The map $\bm q$ as in \cref{qFunction} is injective.
    \end{enumerate}
\end{proposition}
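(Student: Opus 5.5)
The plan is to use that surjectivity of $W_{\bm q}$ in degree $k$ means every form of degree $hk$ in $\bm Z$ is a linear combination of the products $\bm q^{\bm\alpha}$, $\bm\alpha\in M_{k,n+1}$. Both items then follow by testing this identity against suitable forms of degree $hk$. We work over $\KK$, and for statements about points we pass to $\CC$ when needed. Surjectivity is preserved under field extension, since it is a rank condition on the matrix $\W$, which has entries in $\KK$.

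For item \ref{item1TechnicalProp}, suppose some $[\bm z]\in\PP^m$ satisfies $q_0(\bm z)=\cdots=q_n(\bm z)=0$. Then every $\bm q^{\bm\alpha}$ with $|\bm\alpha|=k\geq1$ vanishes at $\bm z$, so every element of $\mathrm{Im}(W_{\bm q})$ vanishes at $\bm z$. However, $\bm z\neq 0$, so some coordinate $z_j\neq0$, and the monomial $Z_j^{hk}\in\S^{hk}(\KK^{m+1})$ does not vanish at $\bm z$. This contradicts surjectivity, so the base locus is empty. If $\KK=\RR$ and one wants emptiness over $\CC$, one applies the same argument after complexification, using the remark above that the rank of $\W$ is unchanged.

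For item 2, item 1 shows that $\bm q$ is a morphism defined everywhere on $\PP^m$. Suppose $[\bm z]\neq[\bm w]$ in $\PP^m$ but $\bm q(\bm z)=\lambda\,\bm q(\bm w)$ for some $\lambda\neq0$; note that $\bm q(\bm w)\neq 0$ by item 1. Then every element $F=\sum_{\bm\alpha} c_{\bm\alpha}\bm q^{\bm\alpha}$ of $\mathrm{Im}(W_{\bm q})$ satisfies $F(\bm z)=\lambda^k F(\bm w)$. So surjectivity would give $G(\bm z)=\lambda^kG(\bm w)$ for all $G\in\S^{hk}$. It remains to produce a degree-$hk$ form violating this identity. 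Since $\bm z,\bm w$ are linearly independent, pick a linear form $\ell$ with $\ell(\bm z)=0$ and $\ell(\bm w)=1$. Also pick a linear form $\ell'$ with $\ell'(\bm z)\neq0$; this is possible because $\bm z\ne0$. Then take $G=\ell\,\ell'^{\,hk-1}$. We have $G(\bm z)=0$ and $G(\bm w)=\ell'(\bm w)^{hk-1}$, and we may choose $\ell'$ nonzero at both $\bm z$ and $\bm w$, which generic linear forms are. Hence $G(\bm w)\neq0$, so $0=G(\bm z)\neq\lambda^kG(\bm w)$, a contradiction. If $hk=1$, take $G=\ell$ directly. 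Therefore $\bm q$ is injective.

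The main subtlety is not a computation but bookkeeping. One must make sure item 2 is stated for the map on projective points, so that the scalar $\lambda$ is handled via the homogeneity factor $\lambda^k$. One must also justify the passage to $\CC$ if the intended statement concerns complex points, which reduces to noting that $\mathrm{rank}\,\W$ is field-independent.
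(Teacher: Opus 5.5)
Your proof is correct and follows the paper's argument. For item 1, every element of $\mathrm{Im}(W_{\bm q})$ vanishes at a base point, and your explicit witness $Z_j^{hk}$ makes the paper's implicit step precise; for item 2, the identity $G(\bm z)=\lambda^k G(\bm w)$ for all $G\in\S^{hk}(\KK^{m+1})$ is contradicted by a form built from a separating linear form. The auxiliary linear form $\ell'$ is unnecessary, since $G=\ell^{hk}$ already gives $G(\bm z)=0\neq\lambda^k=\lambda^k G(\bm w)$, which is the paper's witness with the roles of the two points exchanged.
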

\begin{proof}
The proof is divided in two parts, corresponding to the two items above.
    \begin{enumerate} 
        \item We prove the contrapositive. Assume that the base locus of $\bm q$ is not empty, so there exists $\bm\xi\in\PP^m(\KK)$ such that $q_0(\bm\xi)=\ldots=q_n(\bm\xi)=0$. This implies that every form in $\mathrm{Im}(W_{\bm q})$ vanishes at $\bm\xi$, so $\mathrm{Im}(W_{\bm q})\subsetneq \S^{hk}(\KK^{m+1})$, proving that $W_{\bm q}$ is not surjective.
        \item Let $\bm \xi,\,\bm\eta\in\KK^{m+1}$ such that $[\bm q(\bm \xi)]=[\bm q(\bm\eta)]$. By \cref{item1TechnicalProp}, $\bm q(\bm \xi),\,\bm q(\bm \eta)\neq \bm 0$ and so there exists $\lambda\in\KK\setminus\{0\}$ such that $\bm q(\bm \xi)=\lambda\bm q(\bm \eta)$. Consider $p\in\S^k(\KK^{n+1})$. Evaluating the form $W_{\bm q}(p)$ at the point $\bm \xi$ yields
        \begin{equation}\label{relationWq}
            W_{\bm q}(p)(\bm \xi)=p(\bm q(\bm\xi))=p(\lambda\bm q(\bm\eta))=\lambda^kW_{\bm q}(p)(\bm\eta),
        \end{equation}
        where the last equality holds because $p$ is a form of degree $k$. From \cref{relationWq}, we deduce that every form $p'\in\mathrm{Im}(W_{\bm q})$ satisfies the equation $p'(\bm \xi)=\lambda^kp'(\bm\eta)$. By assumption, $W_{\bm q}$ is surjective, i.e. $\mathrm{Im}(W_{\bm q})=\S^{hk}(\KK^{m+1})$, so the equation is actually satisfied by any $p'$ in $\S^{hk}(\KK^{m+1})$. Assume for a contradiction that $[\bm\xi]\neq[\bm\eta]$, i.e. $\bm\xi$ and $\bm\eta$ are linearly independent as vectors in $\KK^{m+1}$. Hence, there exists a linear form $l\in\S^1(\KK^{m+1})$ such that $l(\bm\xi)=1$ and $l(\bm\eta)=0$. Note that $l^{hk}\in\S^{hk}(\KK^{m+1})$ and $1=l^{hk}(\bm\xi)\neq \lambda^kl^{hk}(\bm \eta)$, which contradicts the fact that $p'(\bm \xi)=\lambda^kp'(\bm\eta)$ for all $p'\in\S^{hk}(\KK^{m+1})$. Thus, $[\bm\xi]=[\bm\eta]$ and so $\bm q$ as in \cref{qFunction} is injective.
    \end{enumerate}
\end{proof}

\begin{proposition}\label{SurjectivityConditions}
    Let ${\bm q}=(q_0,\ldots,q_n)$ with $q_i\in\S^h(\KK^{m+1})$, $k$ a positive integer and $W_{\bm q}$ as in \cref{WeightFunction}.
    \begin{enumerate}
        \item Let ${\bm q}'=(q'_0,\ldots,q'_n)$ with $q_i'\in\S^h(\KK^{m+1})$ and $\,W_{{\bm q}'}$ as in \cref{WeightFunction}. If $\mathrm{span}({\bm q})=\mathrm{span}({\bm q}')$, then $\mathrm{span}(\bm q^{\bm\alpha}\mid\bm\alpha\in M_{k,n+1})=\mathrm{span}(\bm q'^{\bm\alpha}\mid\bm\alpha\in M_{k,n+1})$, i.e. $\mathrm{Im}(W_{\bm q})=\mathrm{Im}(W_{{\bm q}'})$.
        \item If $\{\bm q^{\bm\alpha}\mid\bm\alpha\in M_{k,n+1}\}$ is linearly independent in $\S^{hk}(\KK^{m+1})$, then $\bm q=(q_0,\ldots,q_n)$ is linearly independent in $\S^h(\KK^{m+1})$.
        \item\label{AllMonomials} If ${\bm q}=(\bm Z^{\bm\beta}\mid\bm\beta\in M_{h,m+1})$ is the standard monomial basis of $\S^h(\KK^{m+1})$, then $\{\bm q^{\bm\alpha}\mid\bm\alpha\in M_{k,n+1}\}$ is the standard monomial basis of $\S^{hk}(\KK^{m+1})$, and therefore $W_{\bm q}$ is surjective.
    \end{enumerate}
\end{proposition}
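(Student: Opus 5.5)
The three items are independent, and I will prove them in order using only the multiplicativity of the substitution map $W_{\bm q}$ on all of $\S(\KK^{n+1})$, as in the proof of \cref{HilbertFuncCharacterization}.

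\textbf{Item 1.} Since $\mathrm{span}(\bm q)=\mathrm{span}(\bm q')$, each $q_i'$ is a linear combination $\sum_j c_{ij}q_j$. Let $\ell_i:=\sum_j c_{ij}X_j\in\S^1(\KK^{n+1})$. For $\bm\alpha\in M_{k,n+1}$ we then have
\begin{equation*}
\bm q'^{\bm\alpha}=\prod_i\Big(\sum_j c_{ij}q_j\Big)^{\alpha_i}=W_{\bm q}\Big(\prod_i \ell_i^{\alpha_i}\Big).
\end{equation*}
The form $\prod_i \ell_i^{\alpha_i}$ has degree $k$, so $\bm q'^{\bm\alpha}$ lies in $\mathrm{Im}(W_{\bm q})$. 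This gives $\mathrm{Im}(W_{\bm q'})\subseteq\mathrm{Im}(W_{\bm q})$, and the reverse inclusion follows by symmetry.

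\textbf{Item 2.} I argue by contrapositive. Suppose $\sum_i c_iq_i=0$ with $\bm c\neq\bm 0$, and set $\ell=\sum_i c_iX_i\neq0$. Then $W_{\bm q}(\ell X_0^{k-1})=0$, while $\ell X_0^{k-1}$ is a nonzero form of degree $k$ (we are in a polynomial ring, so there are no zero divisors). Expanding $\ell X_0^{k-1}$ in the monomial basis gives a nontrivial vanishing linear combination of the $\bm q^{\bm\alpha}$. The coefficients are the $c_i$ placed on distinct monomials $X_iX_0^{k-1}$, so the combination is indeed nontrivial. Hence $\{\bm q^{\bm\alpha}\}$ is linearly dependent. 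As a small point of rigor, I would note that the statement implicitly requires the forms $\bm q^{\bm\alpha}$ for distinct $\bm\alpha$ to be distinct; if $q_i=q_j$ for some $i\neq j$, the set collapses, but that case is also dependent in the tuple sense.

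\textbf{Item 3.} Index $\bm q$ by $\bm\beta\in M_{h,m+1}$. For an exponent $\bm\alpha=(\alpha_{\bm\beta})$ with $|\bm\alpha|=k$, we have $\bm q^{\bm\alpha}=\bm Z^{\sum_{\bm\beta}\alpha_{\bm\beta}\bm\beta}$, which is a monomial of degree $hk$. It remains to show that every monomial $\bm Z^{\bm\gamma}$ with $|\bm\gamma|=hk$ arises in this way. For this I split $\bm\gamma$ greedily into $k$ exponents of degree $h$: write out the multiset consisting of $\gamma_0$ copies of index $0$, then $\gamma_1$ copies of index $1$, and so on. This is a list of length $hk$, which I cut into $k$ consecutive blocks of length $h$. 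Each block defines some $\bm\beta\in M_{h,m+1}$, and the resulting multiplicities define $\bm\alpha$.

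The family $\{\bm q^{\bm\alpha}\}$ is therefore exactly the set of monomials of degree $hk$, so it spans $\S^{hk}(\KK^{m+1})$ and $W_{\bm q}$ is surjective. The paper's phrasing "is the standard monomial basis" should be read as an equality of sets: distinct $\bm\alpha$ may give the same monomial when $h,m\ge1$ and $k\ge2$. The only step requiring any thought is this surjectivity on monomials, and the block-cutting construction settles it.
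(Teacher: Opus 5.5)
Your proof is correct and follows the paper's argument item by item. For (1), you use multiplicativity of the substitution map where the paper expands multinomially; for (2), you lift the linear relation to degree $k$ via $\ell X_0^{k-1}$ instead of expanding $q_n^k$, which is an equally valid choice of nontrivial relation; for (3), you make explicit the factorization of $\bm Z^{\bm\gamma}$ into $k$ degree-$h$ monomials, which the paper only asserts. One small slip in a side remark: coincidences $\bm q^{\bm\alpha}=\bm q^{\bm\alpha'}$ with $\bm\alpha\neq\bm\alpha'$ need $h\geq 2$, not merely $h\geq 1$, because for $h=1$ one has $\bm q^{\bm\alpha}=\bm Z^{\bm\alpha}$ and the family is literally the monomial basis.
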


\begin{proof}
    \begin{enumerate}
        \item We show that $\mathrm{Im}(W_{\bm q})\subseteq\mathrm{Im}(W_{{\bm q}'})$. The reverse inclusion follows by exchanging the roles of $\bm q$ and $\bm q'$. Let $p\in\mathrm{Im}(W_{\bm q})=\mathrm{span}(\bm q^{\bm\alpha}\mid\bm\alpha\in M_{k,n+1})$. Since $\mathrm{span}({\bm q})\subseteq \mathrm{span}({\bm q}')$, for all $i=0,\ldots,n$, $q_i$ is a linear combination of $q'_0,\ldots,q'_n$. Furthermore, for all $\bm\alpha\in M_{k,n+1}$, the multinomial expansion of $\bm q^{\bm\alpha}$ yields that $\bm q^{\bm\alpha}\in\mathrm{span}(\bm q'^{\bm\alpha'}\mid\bm\alpha'\in M_{k,n+1})=\mathrm{Im}(W_{\bm q'})$. This implies that also $p\in\mathrm{Im}(W_{{\bm q}'})$. 
        \item We show the contrapositive. Assume $\bm q=(q_0,\ldots,q_n)$ is linearly dependent in $\S^h(\KK^{m+1})$. Without loss of generality, $q_n=\sum_{i=0}^{n-1}\lambda_iq_i$ for some scalars $\lambda_i\in\KK$, not all equal to $0$. Computing the multinomial expansion of $q_n^k$ yields
        \begin{equation*}
            q_n^k=\sum_{\bm\delta=(\delta_0,\ldots,\delta_{n-1})\in M_{k,n}}\binom{k}{\bm\delta}(\lambda_0q_0)^{\delta_0}\cdots(\lambda_{n-1}q_{n-1})^{\delta_{n-1}}.
        \end{equation*}
        Thus, $q_n^k$ is a linear combination of the forms $q_0^{\delta_0}\cdots q_{n-1}^{\delta_{n-1}}$, for $(\delta_0,\ldots,\delta_{n-1})\in M_{k,n}$, so $\{\bm q^{\bm\alpha}\mid\bm\alpha\in M_{k,n+1}\}$ is linearly dependent in $\S^{hk}(\KK^{m+1})$.
        \item Clearly $q^{\bm\alpha}$ is a standard monomial for all $\bm\alpha\in M_{k,n+1}$. Furthermore, we observe that the standard basis of $\S^{hk}(\KK^{m+1})$ is contained in $\{\bm q^{\bm\alpha}\mid\bm\alpha\in M_{k,n+1}\}$. This is because each $\bm Z^{\bm\gamma}$ (for $\bm\gamma\in M_{hk,m+1}$) is the product of $k$ monomials of the form $\bm Z^{\bm\beta}$ (for suitable $\bm\beta_1,\ldots,\bm\beta_k\in M_{h,m+1}$), hence the span of $\{\bm q^{\bm\alpha}\mid\bm\alpha\in M_{k,n+1}\}$ equals all of $\S^{hk}(\KK^{m+1})$.
    \end{enumerate}
\end{proof}

\begin{corollary}\label{CorollarySurjectivity}
    Let ${\bm q}$ and $W_{\bm q}$ be as in \cref{WeightFunction}. If ${\bm q}$ is a basis for $\S^h(\KK^{m+1})$, then $W_{\bm q}$ is surjective.
\end{corollary}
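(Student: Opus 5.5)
The plan is to combine items 1 and 3 of \cref{SurjectivityConditions}. Assume ${\bm q}=(q_0,\ldots,q_n)$ is a basis of $\S^h(\KK^{m+1})$. Then necessarily $n+1=D(h,m+1)$. Let ${\bm q}'=(\bm Z^{\bm\beta}\mid\bm\beta\in M_{h,m+1})$ be the standard monomial basis, indexed by $0,\ldots,n$ in lexicographic order. Both tuples have length $n+1$ and live in $\S^h(\KK^{m+1})$, so $W_{{\bm q}'}:\S^k(\KK^{n+1})\to\S^{hk}(\KK^{m+1})$ is defined on the same domain and codomain as $W_{\bm q}$.

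The argument then proceeds in three steps:
\begin{enumerate}
\item Since both ${\bm q}$ and ${\bm q}'$ are bases of $\S^h(\KK^{m+1})$, we have $\mathrm{span}({\bm q})=\S^h(\KK^{m+1})=\mathrm{span}({\bm q}')$.
\item By item 1 of \cref{SurjectivityConditions}, this gives $\mathrm{Im}(W_{\bm q})=\mathrm{Im}(W_{{\bm q}'})$.
\item By item \ref{AllMonomials} of \cref{SurjectivityConditions}, $W_{{\bm q}'}$ is surjective, so $\mathrm{Im}(W_{{\bm q}'})=\S^{hk}(\KK^{m+1})$. Hence $\mathrm{Im}(W_{\bm q})=\S^{hk}(\KK^{m+1})$, which is the surjectivity of $W_{\bm q}$.
\end{enumerate}

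There is no real obstacle here. The only point needing care is that item 1 of \cref{SurjectivityConditions} is stated for two tuples of the same length $n+1$. This is guaranteed by the dimension count $n+1=\dim_\KK\S^h(\KK^{m+1})$, which holds because ${\bm q}$ is a basis. The ordering of the monomials in ${\bm q}'$ does not matter, since only spans enter the argument.
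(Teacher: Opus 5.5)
Your proposal is correct and follows exactly the paper's route: the paper proves this as an immediate consequence of items (1) and (3) of \cref{SurjectivityConditions}, which is precisely your combination. Your remark that $n+1=D(h,m+1)$ makes ${\bm q}$ and the monomial basis tuples of equal length is a detail the paper leaves implicit.
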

\begin{proof}
    The statement is an immediate consequence of \cref{SurjectivityConditions} (1), (3).
\end{proof}

The following is a counterexample to the converse of \cref{SurjectivityConditions} (1) and \cref{CorollarySurjectivity}.

\begin{example}\label[example]{ExampleHenri}
    Consider $k=3$ and
    \begin{equation*}
        \bm q=(Z_0^3,Z_0^2Z_1,Z_0^2Z_2,Z_0Z_1^2,Z_0Z_2^2,Z_1^3,Z_1^2Z_2,Z_1Z_2^2,Z_2^3,2Z_2^3).
    \end{equation*}
    Observe that the first $9$ monomials of $\bm q$ are linearly independent, and the last monomial is twice the previous one, thus $\dim(\mathrm{span}(\bm q))=9<10=D(3,3)=\dim(\S^3(\KK^3))$. So, $\bm q$ does not span $\S^3(\KK^3)$. Nonetheless, one checks directly that $W_{\bm q}:\S^{3}(\KK^{10})\to\S^{9}(\KK^3)$ is surjective:
    \begin{align*}
        W_{\bm q}(X_0^3)&=Z_0^9; & W_{\bm q}(X_0^2X_1) &=Z_0^8Z_1; & W_{\bm q}(X_0^2X_2)&=Z_0^8Z_2;\\
        W_{\bm q}(X_0^2X_3)&=Z_0^7Z_1^2; & W_{\bm q}(X_0X_1X_2)&=Z_0^7Z_1Z_2; & W_{\bm q}(X_0^2X_4) &=Z_0^7Z_2^2; \\
        W_{\bm q}(X_0^2X_5)&=Z_0^6Z_1^3; & W_{\bm q}(X_0^2X_6)&=Z_0^6Z_1^2Z_2; & W_{\bm q}(X_0^2X_7)&=Z_0^6Z_1Z_2^2;\\
        W_{\bm q}(X_0^2X_8) &=Z_0^6Z_2^3; & W_{\bm q}(X_0X_1X_5)&=Z_0^5Z_1^4; & W_{\bm q}(X_0X_2X_5)&=Z_0^5Z_1^3Z_2;\\
        W_{\bm q}(X_0X_1X_8)&=Z_0^5Z_1Z_2^3; & W_{\bm q}(X_0X_2X_8) &=Z_0^5Z_2^4; & W_{\bm q}(X_0X_3X_5)&=Z_0^4Z_1^5; \\
        W_{\bm q}(X_0X_3X_6)&=Z_0^4Z_1^4Z_2; & W_{\bm q}(X_0X_4X_5)&=Z_0^4Z_1^3Z_2^2; & W_{\bm q}(X_0X_3X_8) &=Z_0^4Z_1^2Z_2^3; \\
        W_{\bm q}(X_0X_4X_7)&=Z_0^4Z_1Z_2^4; & W_{\bm q}(X_0X_4X_8)&=Z_0^4Z_2^5; & W_{\bm q}(X_0X_5^2)&=Z_0^3Z_1^6;\\
        W_{\bm q}(X_0X_5X_6) &=Z_0^3Z_1^5Z_2; & W_{\bm q}(X_0X_6^2)&=Z_0^3Z_1^4Z_2^2; & W_{\bm q}(X_0X_5X_8)&=Z_0^3Z_1^3Z_2^3.
    \end{align*}
    The remaining monomials are obtained similarly. Let $\bm q'$ be the standard monomial basis of $\S^3(\KK^3)$, i.e.
    \begin{equation*}
        \bm q'=(Z_0^3,Z_0^2Z_1,Z_0^2Z_2,Z_0Z_1^2,Z_0Z_1Z_2,Z_0Z_2^2,Z_1^3,Z_1^2Z_2,Z_1Z_2^2,Z_2^3).
    \end{equation*}
    By \cref{SurjectivityConditions} (3), $W_{\bm q'}$ is also surjective, so on the one hand $\mathrm{Im}(W_{\bm q})=\mathrm{Im}(W_{\bm q'})=\S^9(\KK^3)$. On the other hand, $\mathrm{span}(\bm q)\subsetneq\mathrm{span}(\bm q')$. Hence, $\bm q$ and $\bm q'$ are a counterexample to the converse of \cref{SurjectivityConditions} (1). Furthermore, $W_{\bm q}$ is surjective, while $\bm q$ is not a basis of $\S^3(\KK^3)$, so $\bm q$ is a counterexample to the converse of \cref{CorollarySurjectivity}.
\end{example}
In geometric terms, \cref{ExampleHenri} also shows that surjectivity of $W_{\bm q}$ gives no information about whether the variety $V_{\bm q}$ is degenerate or not. In fact, both $W_{\bm q}$ and $W_{\bm q'}$ are surjective, but $V_{\bm q}$ is degenerate, while $V_{\bm q'}$ is not.
The following is a counterexample to the converse of \cref{SurjectivityConditions} (2).

\begin{example}
    We show that the assumption that $\bm q$ is the standard monomial basis in \cref{SurjectivityConditions} (3) cannot be weakened to $\bm q$ is a basis of $\S^h(\KK^{m+1})$. Let $n=h=k=2$ and $m=1$, $\dim(\S^2(\KK^2))=\binom{3}{2}=3$, $\dim(\S^4(\KK^2))=\binom{5}{4}=5$. The standard basis $\bm q=(Z_0^2,Z_0Z_1,Z_1^2)$ of $\S^2(\KK^2)$ gives the standard basis $\{\bm q^{\bm\alpha}\mid \bm\alpha\in M_{2,3}\}=\{Z_0^4,Z_0^3Z_1,Z_0^2Z_1^2,Z_0Z_1^3,\allowbreak Z_1^4\}$ of $\S^{4}(\KK^2)$. Let $\bm q'=(Z_0^2,Z_0Z_1+Z_1^2,Z_1^2)$ be another basis of $\S^2(\KK^2)$. We compute $\{\bm q'^{\bm\alpha}\mid\bm\alpha\in M_{2,3}\}=\{Z_0^4,Z_0^3Z_1+Z_0^2Z_1^2,Z_0^2Z_1^2,Z_0^2Z_1^2+2Z_0Z_1^3+Z_1^4,Z_0Z_1^3+Z_1^4,Z_1^4\}$, which consists of $6$ forms, and therefore, is not a basis of $\S^4(\KK^2)$, since $\dim(\S^4(\KK^2))=5$.
\end{example}

\subsection{\texorpdfstring{$\bm q$}{q}-Symmetric decompositions}\label{QSymDecSubsection}
Let ${\bm q}=(q_0,\ldots,q_n)$ be as in \cref{WeightFunction}.
\begin{definition}
    Let $p\in \S^k(\KK^{n+1})$. A \emph{${\bm q}$-Symmetric decomposition} of $p$ over $\KK$ is an expression of the form
    \begin{equation}\label{QSymDecomposition}
        p=\sum_{i=1}^r \lambda_i\langle \bm q(\bm\xi_i),\bm X\rangle^{k}:=\sum_{i=1}^r \lambda_i\left(q_0(\bm\xi_i)X_0+\cdots+q_n(\bm\xi_i)X_n\right)^{k},
    \end{equation}
    where $r\in\NN,\lambda_1,\ldots,\lambda_r\in\KK\setminus\{0\}$ and $\bm\xi_1,\ldots,\bm\xi_r\in\KK^{m+1}$ correspond to different points in $\PP^{m}(\KK)$.
\end{definition}

\begin{remark}\label[remark]{RemarkWaringqSym}
The connection between ${\bm q}$-Symmetric decompositions and standard Waring decomposition can be summarized as follows.
\begin{enumerate}
    \item  A ${\bm q}$-Symmetric decomposition \labelcref{QSymDecomposition} is, under the surjectivity hypothesis on $W_{\bm q}$ (cf. \cref{NoBasePoints}), in particular, a Waring decomposition \labelcref{WaringDecomposition} with nodes $\bm q(\bm\xi_i)=(q_0(\bm\xi_i),\ldots,q_n(\bm\xi_i))$, for $i=1,\ldots,r$.
    \item Conversely, \labelcref{WaringDecomposition} is a special case of \labelcref{QSymDecomposition}, when $n=m$ and $q_i(X_0,\ldots,X_n)=X_i$.
\end{enumerate}
\end{remark}

\begin{theorem}\label{QSymMainTheorem}
Suppose that $W_{\bm q}$ is surjective.
The following statements hold:
\begin{enumerate}
    \item There is a bijective correspondence between length $r$ ${\bm q}$-Symmetric decompositions
    \begin{equation*}
        p=\sum_{i=1}^r\lambda_i\langle \bm q(\bm\xi_i),\bm X\rangle^k
    \end{equation*}
    of $p \in{\bm q}$-$\Sym_{k,n+1}$ over $\KK$ and length $r$ Waring decompositions
    \begin{equation*}
        \psi_{\bm q}(p)=\sum_{i=1}^r\lambda_i\langle\bm \xi_{i},\bm Z\rangle^{hk}
    \end{equation*}
    of $\psi_{\bm q}(p)$ over $\KK$, where $\lambda_1,\ldots,\lambda_r\in\KK\setminus\{0\}$ and $\bm\xi_1,\ldots,\bm\xi_r\in\KK^{m+1}$ correspond to different points in $\PP^{m}(\KK)$.
    \item The minimal length of a ${\bm q}$-Symmetric decomposition of $p\in{\bm q}$-$\Sym_{k,n+1}$ over $\KK$ is equal to the Waring rank of $\psi_{\bm q}(p)$ over $\KK$.
\end{enumerate}
\end{theorem}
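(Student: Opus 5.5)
The key identity I will establish is
\[
\phi_{\bm q}\bigl(\langle\bm\xi,\bm Z\rangle^{hk}\bigr)=\langle\bm q(\bm\xi),\bm X\rangle^{k}\qquad\text{for every }\bm\xi\in\KK^{m+1}.
\]
Granting it, both parts of \cref{QSymMainTheorem} follow from linearity of $\phi_{\bm q}$ and its injectivity, which holds by \cref{WSurjPhiInj} since $W_{\bm q}$ is surjective.

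To prove the identity, I will use the reproducing property of the apolar product: for any $f\in\S^k(\KK^{n+1})$ and $\bm v\in\KK^{n+1}$ one has $\langle f,\langle\bm v,\bm X\rangle^k\rangle_k=f(\bm v)$. This is a one-line check from the multinomial expansion $\langle \bm v,\bm X\rangle^k=\sum_{\bm\alpha}\binom{k}{\bm\alpha}\bm v^{\bm\alpha}\bm X^{\bm\alpha}$, since the factors $\binom{k}{\bm\alpha}$ cancel against the $\binom{k}{\bm\alpha}^{-1}$ in the definition of $\langle\cdot,\cdot\rangle_k$. The same holds in degree $hk$ in the variables $\bm Z$. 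Then, by \cref{PhiAdjointW}, for every $f\in\S^k(\KK^{n+1})$,
\[
\langle f,\phi_{\bm q}(\langle\bm\xi,\bm Z\rangle^{hk})\rangle_k=\langle W_{\bm q}(f),\langle\bm\xi,\bm Z\rangle^{hk}\rangle_{hk}=W_{\bm q}(f)(\bm\xi)=f(\bm q(\bm\xi))=\langle f,\langle\bm q(\bm\xi),\bm X\rangle^k\rangle_k .
\]
The apolar product is nondegenerate: it is a diagonal bilinear form with nonzero coefficients. Hence the identity holds. Over $\CC$ it is bilinear, not Hermitian, which is exactly what is used here.

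For part (1), start from a Waring decomposition $\psi_{\bm q}(p)=\sum_i\lambda_i\langle\bm\xi_i,\bm Z\rangle^{hk}$ and apply $\phi_{\bm q}$. Since $\phi_{\bm q}\circ\psi_{\bm q}=\mathrm{id}$ on ${\bm q}$-$\Sym_{k,n+1}$, this gives the ${\bm q}$-Symmetric decomposition $p=\sum_i\lambda_i\langle\bm q(\bm\xi_i),\bm X\rangle^k$. Conversely, a ${\bm q}$-Symmetric decomposition of $p$ can be written as $p=\phi_{\bm q}\bigl(\sum_i\lambda_i\langle\bm\xi_i,\bm Z\rangle^{hk}\bigr)$. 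Injectivity of $\phi_{\bm q}$ then forces $\psi_{\bm q}(p)=\sum_i\lambda_i\langle\bm\xi_i,\bm Z\rangle^{hk}$. The two assignments use the same data $(\lambda_i,\bm\xi_i)$, so they are mutually inverse and preserve the length $r$.

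The point needing care is the distinctness conditions on nodes. In both decompositions the nodes $\bm\xi_i$ are required to be distinct in $\PP^m(\KK)$, so the correspondence respects this condition by construction. By \cref{NoBasePoints}, $\bm q(\bm\xi_i)\neq0$ and the points $[\bm q(\bm\xi_i)]$ are distinct, so the ${\bm q}$-Symmetric side is also a genuine Waring decomposition, in line with \cref{RemarkWaringqSym}.

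Rescaling is the main bookkeeping subtlety. Replacing $\bm\xi_i$ by $c\bm\xi_i$ multiplies $\langle\bm\xi_i,\bm Z\rangle^{hk}$ by $c^{hk}$ and multiplies $\langle\bm q(\bm\xi_i),\bm X\rangle^k$ by $(c^h)^k=c^{hk}$. The weights therefore transform identically on both sides, so the bijection is well defined on decompositions up to representative choice.

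Part (2) follows immediately from the length-preserving bijection in (1): taking the minimum of $r$ on both sides gives that the minimal length of a ${\bm q}$-Symmetric decomposition of $p$ equals the Waring rank of $\psi_{\bm q}(p)$. Waring decompositions of $\psi_{\bm q}(p)$ always exist, as recalled in the preliminaries, so both minima are attained.
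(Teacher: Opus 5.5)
Your proposal is correct and follows the paper's argument: both proofs reduce the theorem to the identity $\phi_{\bm q}(\langle\bm\xi,\bm Z\rangle^{hk})=\langle\bm q(\bm\xi),\bm X\rangle^{k}$ together with linearity and injectivity of $\phi_{\bm q}$, and both invoke \cref{NoBasePoints} for the distinctness of the points $[\bm q(\bm\xi_i)]$. The only difference is in how the identity is checked: the paper expands coefficients using the explicit formula for $\phi_{\bm q}$ and the relation $\sum_{\bm\gamma}\bm\xi^{\bm\gamma}W_{\bm\gamma,\bm\alpha}=\bm q(\bm\xi)^{\bm\alpha}$, whereas you derive it coordinate-free from \cref{PhiAdjointW} and the reproducing property $\langle f,\langle\bm v,\bm X\rangle^k\rangle_k=f(\bm v)$, which is the same fact the paper later uses in \cref{QSymOnlyDec} and \cref{DualCone}.
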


\begin{proof}
    Let $p\in{\bm q}$-$\Sym_{k,n+1}$. We show that the correspondence between ${\bm q}$-Symmetric decompositions of $p$ and Waring decompositions of $\psi_{\bm q}(p)$ is bijective. Observe that
    \begin{align*}
        \psi_{\bm q}(p)&=\sum_{i=1}^r \lambda_i\left(\xi_{i,0} Z_0+\cdots+\xi_{i,m}Z_m\right)^{hk}=\sum_{\bm\gamma\in M_{hk,m+1}}\binom{hk}{\bm\gamma}\left(\sum_{i=1}^r \lambda_i\bm\xi_i^{\bm\gamma}\right)\bm Z^{\bm\gamma}
    \end{align*}
    if and only if
    \begin{align*}
        p&=\sum_{\bm\alpha\in M_{k,n+1}}\binom{k}{\bm\alpha}\left(\sum_{\bm\gamma\in M_{hk,m+1}}\binom{hk}{\bm\gamma}^{-1}\binom{hk}{\bm\gamma}\left(\sum_{i=1}^r\lambda_i\bm\xi_{i}^{\bm\gamma}\right)W_{\bm\gamma,\bm\alpha}\right)\bm X^{\bm\alpha}\\
        &=\sum_{\bm\alpha\in M_{k,n+1}}\binom{k}{\bm\alpha}\left(\sum_{\bm\gamma\in M_{hk,m+1}}\left(\sum_{i=1}^r\lambda_i\bm\xi_{i}^{\bm\gamma}\right)W_{\bm\gamma,\bm\alpha}\right)\bm X^{\bm\alpha}\\
        &=\sum_{\bm\alpha\in M_{k,n+1}}\binom{k}{\bm\alpha}\left(\sum_{i=1}^r\lambda_i\left(\sum_{\bm\gamma\in M_{hk,m+1}}\bm\xi_{i}^{\bm\gamma}W_{\bm\gamma,\bm\alpha}\right)\right)\bm X^{\bm\alpha} =\sum_{\bm\alpha\in M_{k,n+1}}\binom{k}{\bm\alpha}\left(\sum_{i=1}^r\lambda_i\bm q(\bm\xi_i)^{\bm\alpha}\right)\bm X^{\bm\alpha}\\
        &=\sum_{i=1}^r\lambda_i\left(\sum_{\bm\alpha\in M_{k,n+1}}\binom{k}{\bm\alpha}\bm q(\bm\xi_i)^{\bm\alpha}\bm X^{\bm\alpha}\right) =\sum_{i=1}^r\lambda_i\langle \bm q(\bm\xi_i),\bm X\rangle^k
    \end{align*}
    Finally, the computation above also shows that the bijective correspondence between ${\bm q}$-Sym\-met\-ric decompositions of $p$ and Waring decompositions of $\psi_{\bm q}(p)$ is length preserving, unless at least one of $\bm\xi_1,\ldots,\bm\xi_r$ is a base point of $\bm q$, or $[\bm q(\bm\xi_i)]=[\bm q(\bm\xi_j)]$ for $i\neq j$, but, when $W_{\bm q}$ is surjective, this is never the case, due to \cref{NoBasePoints}.
\end{proof}
Under the assumption of surjectivity of $W_{\bm q}$, we can characterize all the forms $p\in\S^k(\KK^{n+1})$ admitting a Waring decomposition on $V_{\bm q}$.
\begin{theorem}\label{QSymOnlyDec}
    Suppose that $W_{\bm q}$ is surjective and let $p\in\S^k(\KK^{n+1})$. Then $p$ admits a Waring decomposition on $V_{\bm q}$ if and only if $p\in{\bm q}$-$\Sym_{k,n+1}$.
\end{theorem}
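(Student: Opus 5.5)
The plan is to prove the two implications separately. Both rest on one identity, which I would isolate first: for every $\bm\xi\in\KK^{m+1}$,
\begin{equation*}
    \phi_{\bm q}\big(\langle\bm\xi,\bm Z\rangle^{hk}\big)=\langle\bm q(\bm\xi),\bm X\rangle^{k}.
\end{equation*}
This can be checked in two ways.
\begin{itemize}
    \item Directly: it is the single-summand case of the computation in the proof of \cref{QSymMainTheorem}, which only uses linearity and the explicit formula \cref{ExplicitPsi}, not surjectivity.
    \item Conceptually: $A_{hk}(\langle\bm\xi,\bm Z\rangle^{hk})$ is the evaluation functional at $\bm\xi$, because $\langle \langle\bm\xi,\bm Z\rangle^{hk},f\rangle_{hk}=f(\bm\xi)$. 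Its pullback by $W_{\bm q}$ is $p\mapsto W_{\bm q}(p)(\bm\xi)=p(\bm q(\bm\xi))$, the evaluation at $\bm q(\bm\xi)$. Applying $A_k^{-1}$ then gives $\langle\bm q(\bm\xi),\bm X\rangle^k$.
\end{itemize}

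\emph{($\Leftarrow$)} Let $p\in{\bm q}$-$\Sym_{k,n+1}$. Since $W_{\bm q}$ is surjective, $\psi_{\bm q}(p)\in\S^{hk}(\KK^{m+1})$ is defined. By the existence of Waring decompositions in characteristic $0$ (Section 2.1), it has a decomposition $\psi_{\bm q}(p)=\sum_i\lambda_i\langle\bm\xi_i,\bm Z\rangle^{hk}$ with distinct $[\bm\xi_i]\in\PP^m(\KK)$. By \cref{QSymMainTheorem} this yields $p=\sum_i\lambda_i\langle\bm q(\bm\xi_i),\bm X\rangle^k$. \Cref{NoBasePoints} guarantees two things: each $\bm q(\bm\xi_i)\neq\bm 0$, and the points $[\bm q(\bm\xi_i)]$ are pairwise distinct. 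Hence this is a genuine Waring decomposition, and its nodes lie in $\mathrm{Im}(\bm q)\subseteq V_{\bm q}$.

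\emph{($\Rightarrow$)} Let $p=\sum_i\lambda_i\langle\bm v_i,\bm X\rangle^k$ with $[\bm v_i]\in V_{\bm q}$. The first goal is to write each node as $\bm v_i=c_i\bm q(\bm\xi_i)$ with $c_i\neq0$. Absorbing $c_i^k$ into $\lambda_i$ then gives $p=\sum_i\lambda_i'\langle\bm q(\bm\xi_i),\bm X\rangle^k$. By the identity above, this equals $\phi_{\bm q}\big(\sum_i\lambda_i'\langle\bm\xi_i,\bm Z\rangle^{hk}\big)$, so $p\in\mathrm{Im}(\phi_{\bm q})={\bm q}$-$\Sym_{k,n+1}$.

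The main obstacle is exactly the step $[\bm v_i]\in V_{\bm q}\Rightarrow [\bm v_i]\in\mathrm{Im}(\bm q)$. The definition takes $V_{\bm q}$ to be the Zariski closure of the image, so I need the image itself to be closed.
\begin{itemize}
    \item For $\KK=\CC$: by \cref{NoBasePoints}(1), $\bm q$ has no base points, so it is a morphism $\PP^m\to\PP^n$. Projective varieties are complete, so the image is closed and $V_{\bm q}=\bm q(\PP^m(\CC))$.
    \item For $\KK=\RR$: the real image need not be Zariski closed. A real point of $V_{\bm q}$ has a preimage in $\PP^m(\CC)$, and by the injectivity in \cref{NoBasePoints}(2) that preimage is unique. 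It is therefore fixed by complex conjugation, since $\bm q$ has real coefficients, and so it is a real point.
\end{itemize}
Thus in both cases every point of $V_{\bm q}(\KK)$ is $[\bm q(\bm\xi)]$ for some $\bm\xi\in\KK^{m+1}$. This is where surjectivity of $W_{\bm q}$ enters the forward direction.
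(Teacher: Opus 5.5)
Your proposal is correct. The ($\Leftarrow$) direction is the same as the paper's; the ($\Rightarrow$) direction takes a different route.

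The paper never shows that $V_{\bm q}$ equals $\mathrm{Im}(\bm q)$; it argues by duality instead. Any $f$ apolar-orthogonal to ${\bm q}$-$\Sym_{k,n+1}$ satisfies $0=\langle f,\langle\bm q(\bm\xi),\bm X\rangle^k\rangle_k=f(\bm q(\bm\xi))$. Hence $f$ vanishes on $\mathrm{Im}(\bm q)$, and therefore on its Zariski closure $V_{\bm q}$. So for every $[\bm v]\in V_{\bm q}$, the form $\langle\bm v,\bm X\rangle^k$ is orthogonal to the orthogonal complement of ${\bm q}$-$\Sym_{k,n+1}$, and by non-degeneracy it lies in ${\bm q}$-$\Sym_{k,n+1}$. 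This argument is pure linear algebra and works the same for both choices of $\KK$. It also does not genuinely need surjectivity for this direction, since it amounts to ${\bm q}$-$\Sym_{k,n+1}=\mathrm{Im}(W_{\bm q}^\dagger)=(\ker W_{\bm q})^\perp=I_k^\perp$.

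Your route proves the stronger geometric fact $V_{\bm q}(\KK)=\mathrm{Im}(\bm q)$ under surjectivity. The payoff is that every Waring decomposition on $V_{\bm q}$ is literally a ${\bm q}$-Symmetric decomposition, so \cref{QSymMainTheorem}, including the rank equality, applies to it directly. The price is using closedness of images of projective morphisms and, over $\RR$, a descent argument with two steps you leave implicit:
\begin{enumerate}
\item You apply \cref{NoBasePoints} to complex points, so you need it over $\CC$. This is legitimate because the real matrix $\W$ has the same rank over $\CC$, so the complexified $W_{\bm q}$ is still surjective.
\item You need every real point of $V_{\bm q}$ to lie in $\bm q(\PP^m(\CC))$. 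A real form vanishing on $\bm q(\PP^m(\RR))$ satisfies $f\circ\bm q=0$ identically, so the real points of $V_{\bm q}$ are zeros of $I$. The closed set $\bm q(\PP^m(\CC))$ is exactly the complex zero locus of $I$, so these points lie in it.
\end{enumerate}
With these filled in, your conjugation argument shows the unique preimage is real, and the proof goes through.
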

\begin{proof}
    If $p\in{\bm q}$-$\Sym_{k,n+1}$, since $W_{\bm q}$ is surjective, \cref{QSymMainTheorem} guarantees the existence of a ${\bm q}$-Symmetric decomposition, as $\psi_{\bm q}(p)$ always admits a Waring decomposition. Thus, since a ${\bm q}$-Symmetric decomposition is a Waring decomposition on $\mathrm{Im}(\bm q)\subseteq V_{\bm q}$, we conclude that $\bm q$-Symmetric tensors have Waring decompositions on $V_{\bm q}$.

    Now assume that $p\in\S^k(\KK^{n+1})$ has a ${\bm q}$-Symmetric decomposition
    \begin{equation*}
        p=\sum_{i=1}^r\lambda_i\langle \bm q(\bm\xi_i),\bm X\rangle^k.
    \end{equation*}
    Expanding this expression yields
    \begin{align*}
        p& = \sum_{i=1}^r\lambda_i\sum_{\bm\alpha\in M_{k,n+1}}\binom{k}{\bm\alpha} \bm q(\bm\xi_i)^{\bm\alpha}\bm X^{\bm\alpha} =\sum_{i=1}^r\lambda_i\sum_{\bm\alpha\in M_{k,n+1}}\binom{k}{\bm\alpha} \bm X^{\bm\alpha}\sum_{\bm\gamma\in M_{hk,m+1}}W_{\bm\gamma,\bm\alpha}\bm\xi_i^{\bm\gamma}\\
        &=\sum_{\bm\gamma\in M_{hk,m+1}}\sum_{i=1}^r\lambda_i\bm\xi_i^{\bm\gamma}\binom{hk}{\bm\gamma}\left(\binom{hk}{\bm\gamma}^{-1}\sum_{\bm\alpha\in M_{k,n+1}}\binom{k}{\bm\alpha} \bm X^{\bm\alpha}W_{\bm\gamma,\bm\alpha}\right)=\sum_{\bm\gamma\in M_{hk,m+1}}\sum_{i=1}^r\lambda_i\bm\xi_i^{\bm\gamma}\binom{hk}{\bm\gamma}b_{\bm\gamma},
    \end{align*}
    where $\B:=\left\{b_{\bm\gamma}=\binom{hk}{\bm\gamma}^{-1}\sum_{\bm\alpha\in M_{k,n+1}}\binom{k}{\bm\alpha} \bm X^{\bm\alpha}W_{\bm\gamma,\bm\alpha}\right\}$ is the standard basis of ${\bm q}$-$\Sym_{k,n+1}$.
    This proves that $p$ is a linear combination of the basis elements in $\B$, and thus $p\in{\bm q}$-$\Sym_{k,n+1}$.
    From the implications above, we deduce that ${\bm q}$-$\Sym_{k,n+1}$ is generated by $\{l_{\bm q(\bm\xi)}^k=(q_0(\bm\xi)X_0+\cdots +q_n(\bm\xi)X_n)^k\in\S^k(\KK^{n+1})\mid[q_0(\bm\xi):\cdots:q_n(\bm\xi)]\in \mathrm{Im}(\bm q)\}$.
    Conversely, note that any form $f\in {\bm q}$-$\Sym_{k,n+1}^{\perp}$, the orthogonal complement of ${\bm q}$-$\Sym_{k,n+1}$ with respect to the apolar product, vanishes on $\mathrm{Im}(\bm q)$, as a consequence of bilinearity of the inner product and
    \begin{equation*}
        0=\langle f,l_{\bm q(\bm\xi)}^k\rangle_k=\langle f,(q_0(\bm\xi)X_0+\cdots+ q_n(\bm\xi)X_n)^k\rangle_k=f(\bm q(\bm\xi))
    \end{equation*}
    for any $[q_0(\bm\xi):\cdots:q_n(\bm\xi)]\in \mathrm{Im}(\bm q)$. Furthermore, any polynomial vanishing on $\mathrm{Im}(\bm q)$ also vanishes on the Zariski closure $\overline{\mathrm{Im}(\bm q)}=V_{\bm q}$, hence $0=\langle f,l_{\bm v}^k\rangle_k=f(\bm v)$ for any $[v_0:\cdots:v_n]\in V_{\bm q}$. Thus, $l_{\bm v}^k$ is orthogonal to every form which is orthogonal to ${\bm q}$-$\Sym_{k,n+1}$. Since the apolar product is non-degenerate, we conclude that $l_{\bm v}^k\in {\bm q}$-$\Sym_{k,n+1}$. By linearity, it follows that any form $p\in\S^k(\KK^{n+1})$ admitting a Waring decomposition on $V_{\bm q}$ is a $\bm q$-Symmetric tensor, i.e. $p\in {\bm q}$-$\Sym_{k,n+1}$, completing the proof of the converse implication.
\end{proof}

In the remainder of the section, we set $\KK=\RR$ and study \emph{positive} ${\bm q}$-Symmetric decompositions of ${\bm q}$-Symmetric tensors.
\begin{definition}
    Let $p\in{\bm q}$-$\Sym_{k,n+1}$ be a non-zero form. A \emph{positive} ${\bm q}$-Symmetric decomposition of $p$ is a ${\bm q}$-Symmetric decomposition \begin{equation*}
        p=\sum_{i=1}^r \lambda_i\langle \bm q(\bm\xi_i),\bm X\rangle^{k},
    \end{equation*}
    where $r\in\NN,\lambda_1,\ldots,\lambda_r>0$ and $\bm\xi_1,\ldots,\bm\xi_r\in\RR^{m+1}$ correspond to different points in $\PP^{m}(\RR)$. We call positive ${\bm q}$-Symmetric tensors those ${\bm q}$-Symmetric tensors admitting a positive ${\bm q}$-Symmetric decomposition.
\end{definition}

We now restrict to the case of $k$ even and characterize the dual cone of the cone of \emph{positive ${\bm q}$-Symmetric tensors}.
\begin{theorem}[Dual of the cone of positive ${\bm q}$-Symmetric tensors]\label{DualCone}
    Let $\Sigma_{\bm q}\subseteq \S^k(\RR^{n+1})$ be the cone of positive ${\bm q}$-Symmetric tensors. The cone $\Sigma_{\bm q}^*$ dual to $\Sigma_{\bm q}$ with respect to the apolar product is
    \begin{equation*}
        \Sigma_{\bm q}^*=\left\{p\in\S^k(\RR^{n+1})\mid p(\bm q(\bm\xi))\geq0 \text{ for all }\bm\xi\in\mathbb{S}^{m}\right\}.
    \end{equation*}
\end{theorem}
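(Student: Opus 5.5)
The plan is to reduce everything to the single identity $\langle f,\langle\bm v,\bm X\rangle^k\rangle_k=f(\bm v)$ for $f\in\S^k(\RR^{n+1})$ and $\bm v\in\RR^{n+1}$. This identity was already used in the proof of \cref{QSymOnlyDec}, and it follows by expanding $\langle\bm v,\bm X\rangle^k=\sum_{\bm\alpha}\binom{k}{\bm\alpha}\bm v^{\bm\alpha}\bm X^{\bm\alpha}$ and applying the definition of the apolar product. By definition, $\Sigma_{\bm q}$ is the convex cone generated by the forms $\langle\bm q(\bm\xi),\bm X\rangle^k$ with $\bm\xi\in\RR^{m+1}$. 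A sum of positive multiples of such generators with distinct points is exactly a positive $\bm q$-Symmetric decomposition. When two nodes coincide projectively, $\bm\xi_j=c\,\bm\xi_i$, we have $\langle\bm q(\bm\xi_j),\bm X\rangle^k=c^{hk}\langle\bm q(\bm\xi_i),\bm X\rangle^k$, and $c^{hk}>0$ because $k$ is even. So the weights merge into a positive weight, and $\Sigma_{\bm q}$ really is this generated cone.

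For the inclusion $\Sigma_{\bm q}^*\subseteq\{\ldots\}$, I take $p\in\Sigma_{\bm q}^*$ and $\bm\xi\in\mathbb S^m$. If $\bm q(\bm\xi)=0$ the inequality $p(\bm q(\bm\xi))\ge 0$ is trivial. Otherwise $\langle\bm q(\bm\xi),\bm X\rangle^k$ is a positive $\bm q$-Symmetric tensor of length one, so $0\le\langle p,\langle\bm q(\bm\xi),\bm X\rangle^k\rangle_k=p(\bm q(\bm\xi))$.

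For the reverse inclusion, I take $p$ with $p(\bm q(\bm\xi))\ge0$ on $\mathbb S^m$ and any $w=\sum_i\lambda_i\langle\bm q(\bm\xi_i),\bm X\rangle^k\in\Sigma_{\bm q}$. Then
\begin{equation*}
\langle p,w\rangle_k=\sum_i\lambda_i\,p(\bm q(\bm\xi_i)).
\end{equation*}
Write $\bm\xi_i=c_i\bm\eta_i$ with $\bm\eta_i\in\mathbb S^m$ and $c_i\neq0$ (nodes are nonzero since they represent projective points). Because $p$ has degree $k$ and $\bm q$ has degree $h$, we get $p(\bm q(\bm\xi_i))=c_i^{hk}p(\bm q(\bm\eta_i))\ge0$, using again that $k$ is even. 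With $\lambda_i>0$ this gives $\langle p,w\rangle_k\ge0$.

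The main point requiring care is this use of the evenness of $k$. It is what makes $\langle\bm q(\bm\xi),\bm X\rangle^k$ independent of the sign and scale of the representative $\bm\xi$ up to a positive factor, and it is what justifies restricting to $\mathbb S^m$ rather than all of $\RR^{m+1}$. Nothing else is needed: no surjectivity of $W_{\bm q}$ and no closedness of $\Sigma_{\bm q}$, since a dual cone is insensitive to taking closures. Apart from this, the argument is a direct computation with the apolar identity.
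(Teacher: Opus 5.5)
Your argument is correct and is the same as the paper's. Both reduce membership in $\Sigma_{\bm q}^*$ to testing against the generators $\langle\bm q(\bm\xi),\bm X\rangle^k$, use the identity $\langle p,\langle\bm v,\bm X\rangle^k\rangle_k=p(\bm v)$, and use the evenness of $k$ (via $c^{hk}>0$) to restrict to $\bm\xi\in\mathbb{S}^m$; you spell out the details on base points and projectively coinciding nodes that the paper leaves implicit.

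The one point you leave tacit is that each generator lies in $\bm q$-$\Sym_{k,n+1}$, as the definition of $\Sigma_{\bm q}$ requires. This holds without surjectivity, since the adjoint relation $\phi_{\bm q}=W_{\bm q}^\dagger$ gives $\phi_{\bm q}(\langle\bm\xi,\bm Z\rangle^{hk})=\langle\bm q(\bm\xi),\bm X\rangle^k$.
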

\begin{proof}
    By bilinearity of the apolar product, a form $p\in \S^k(\RR^{n+1})$ lies in the dual cone $\Sigma_{\bm q}^*$ if and only if, for all $\bm\xi\in\mathbb{S}^{m}$ it holds
    \begin{equation}\label{ApolarProductEvaluation}
        \langle p,(q_0(\bm\xi)X_0+\cdots+ q_n(\bm\xi)X_n)^k\rangle_k\geq0,
    \end{equation}
    where, without loss of generality, we consider $\bm\xi\in\mathbb{S}^{m}$ instead of $\bm\xi\in\RR^{m+1}$ because the polynomials involved are homogeneous and $k$ is even. Finally, note that, by straightforward computation, the left-hand side of \cref{ApolarProductEvaluation} is equal to the evaluation $p(\bm q(\bm\xi))$, completing the proof.
\end{proof}

\begin{proposition}\label{prop:waringIFFcoeffsmomentsequence}
    Let $p=\sum_{\bm\alpha\in M_{k,n+1}}\binom{k}{\bm\alpha}p_{\bm\alpha}\bm X^{\bm\alpha}\in\S^{k}(\KK^{n+1})$ be a form, $A\subseteq\KK^{n+1}$ a set. There is a length-preserving bijective correspondence between positive Waring decompositions of $p$ of length $r$ on $A$
    \begin{equation*}
        p=\sum_{i=1}^r\lambda_i\langle \bm\xi_i,\bm X\rangle^k,
    \end{equation*}
    and $r$-atomic representing measures $\sum_{i=1}^r\lambda_i\delta_{\bm\xi_i}$ for $(p_{\bm\alpha})_{\bm\alpha\in M_{k,n+1}}$ supported on $A$, i.e. $p_{\bm\alpha}=\sum_{i=1}^r\lambda_i\bm\xi_i^{\bm\alpha}$ for all $\bm\alpha\in M_{k,n+1}$,
    where $r\in\NN$, $\lambda_1,\ldots,\lambda_r>0$ and $\bm\xi_1,\ldots,\bm\xi_r\in A$.
\end{proposition}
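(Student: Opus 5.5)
The plan is to compare coefficients in the standard monomial basis of $\S^k(\KK^{n+1})$. First expand each power of a linear form with the multinomial theorem:
\begin{equation*}
\langle \bm\xi_i,\bm X\rangle^k=\sum_{\bm\alpha\in M_{k,n+1}}\binom{k}{\bm\alpha}\bm\xi_i^{\bm\alpha}\bm X^{\bm\alpha}.
\end{equation*}
Consequently, for any $r\in\NN$, weights $\lambda_1,\ldots,\lambda_r>0$ and nodes $\bm\xi_1,\ldots,\bm\xi_r\in A$,
\begin{equation*}
\sum_{i=1}^r\lambda_i\langle \bm\xi_i,\bm X\rangle^k=\sum_{\bm\alpha\in M_{k,n+1}}\binom{k}{\bm\alpha}\Big(\sum_{i=1}^r\lambda_i\bm\xi_i^{\bm\alpha}\Big)\bm X^{\bm\alpha}.
\end{equation*}

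Since $\{\bm X^{\bm\alpha}\}_{\bm\alpha\in M_{k,n+1}}$ is a basis and the multinomial coefficients are nonzero in characteristic $0$, this expression equals $p=\sum_{\bm\alpha}\binom{k}{\bm\alpha}p_{\bm\alpha}\bm X^{\bm\alpha}$ if and only if $p_{\bm\alpha}=\sum_{i=1}^r\lambda_i\bm\xi_i^{\bm\alpha}$ for every $\bm\alpha\in M_{k,n+1}$. The right-hand side is exactly $\int \bm X^{\bm\alpha}\,\d\mu$ for the measure $\mu=\sum_{i=1}^r\lambda_i\delta_{\bm\xi_i}$. This measure is positive because all $\lambda_i>0$, it is supported on $\{\bm\xi_1,\ldots,\bm\xi_r\}\subseteq A$, and it is therefore a representing measure for $(p_{\bm\alpha})_{\bm\alpha\in M_{k,n+1}}$.

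The correspondence sends the decomposition with data $(\lambda_i,\bm\xi_i)_{i}$ to the measure $\sum_i\lambda_i\delta_{\bm\xi_i}$, and its inverse reads the weights and nodes off the atoms of the measure. These two maps are mutually inverse and both preserve $r$, provided $r$ distinct nodes give exactly $r$ atoms. That is the only subtle step. The nodes of a Waring decomposition are required to be distinct points of $\PP^n(\KK)$, so in particular they are distinct vectors, and the atoms of $\mu$ are therefore distinct. Conversely, an $r$-atomic measure has $r$ distinct atoms, but these vectors might be nonzero multiples of one another, i.e. the same projective point.

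I would handle this as follows. If two atoms satisfy $\bm\xi_j=c\bm\xi_i$, then in the decomposition they can be merged into the single term $(\lambda_i+\lambda_j c^k)\langle\bm\xi_i,\bm X\rangle^k$. Such a merge changes the length, so it must be excluded rather than performed. I would therefore read ``$r$-atomic'' as having atoms that are pairwise projectively distinct, consistent with the convention for nodes. Alternatively, one can note that for the sets $A$ used later (for instance $A=\bm q(\mathbb{S}^m)$ with $k$ even, where a rescaling $\bm\xi\mapsto c\bm\xi$ can be absorbed into the weight via $c^k>0$), representatives can be normalized so that the correspondence is exactly length-preserving. With that convention fixed, the bijection follows from the coefficient identity above.
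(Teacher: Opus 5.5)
Your proof is correct and follows the paper's argument: expand each $\langle\bm\xi_i,\bm X\rangle^k$ by the multinomial theorem and compare coefficients in the monomial basis. The paper does not discuss atoms that are distinct vectors but proportional, and your convention of requiring pairwise projectively distinct atoms is the right way to make the bijection literally length-preserving. Your alternative via $A=\bm q(\mathbb{S}^m)$ does not by itself exclude such pairs when $h$ is odd, since that set then contains both $\bm q(\bm\xi)$ and $-\bm q(\bm\xi)$.
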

\begin{proof}
    Expanding the positive Waring decomposition of $p$ yields
    \begin{align*}
        p&=\sum_{\bm\alpha\in M_{k,n+1}}\binom{k}{\bm\alpha}p_{\bm\alpha}\bm X^{\bm\alpha}=\sum_{i=1}^r\lambda_i\left(\xi_{i,0} X_0 + \cdots + \xi_{i,n} X_n\right)^{k}=\sum_{i=1}^r\lambda_i\left(\sum_{\bm\alpha\in M_{k,n+1}}\binom{k}{\bm\alpha}\bm\xi_i^{\bm\alpha}\bm X^{\bm\alpha}\right)\\
        &=\sum_{\bm\alpha\in M_{k,n+1}}\binom{k}{\bm\alpha}\left(\sum_{i=1}^r\lambda_i\bm\xi_i^{\bm\alpha}\right)\bm X^{\bm\alpha}.
    \end{align*}
    Comparing coefficients in $p$ yields the statement.
\end{proof}

In the next proposition, we prove a sufficient condition, based on the theory of flat extensions of moment matrices (see \cite{curtoFialkowTruncatedMP}), for a $\bm q$-Symmetric tensor to have a positive $\bm q$-Symmetric decomposition.

\begin{proposition}\label{FlatExtensionForDecomps}
    Let $k$ be even. Suppose that $W_{\bm q}$ is surjective and let $p\in{\bm q}$-$\Sym_{k,n+1}$ and $\psi_{\bm q}(p)=\sum_{\bm\gamma\in M_{hk,m+1}}\binom{hk}{\bm\gamma}c_{\bm\gamma}\bm Z^{\bm\gamma}$, $H=\operatorname{Hankel}(\bm c)$ be the Hankel matrix of $\bm c=(c_{\bm\gamma})_{\bm\gamma\in M_{hk,m+1}}$, with $r=\rank(H)$.
    If $H$ is PSD and $\rank(H_0)=r$, where $H_0$ is the principal submatrix of $H$ indexed by the multiples of $Z_0$ , then $p$ has a positive ${\bm q}$-Symmetric decomposition of length $r=\rank(H)$. Conversely, if $p$ has a positive ${\bm q}$-Symmetric decomposition of length $s$, then $H$ is PSD and $\rank(H)\leq s$.
\end{proposition}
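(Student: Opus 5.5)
Throughout, I read $H$ as the homogeneous moment matrix $M^{\mathrm h}_{hk/2}(\bm c)=(c_{\bm\gamma+\bm\gamma'})_{\bm\gamma,\bm\gamma'\in M_{hk/2,m+1}}$. Since $k$ is even, $hk$ is even, so this is well defined.

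The plan is to transfer the whole problem to $\psi_{\bm q}(p)\in\S^{hk}(\RR^{m+1})$ and then apply the flat extension theorem of Curto--Fialkow. By \cref{QSymMainTheorem}, there is a length-preserving bijection between decompositions $p=\sum_i\lambda_i\langle\bm q(\bm\xi_i),\bm X\rangle^k$ and decompositions $\psi_{\bm q}(p)=\sum_i\lambda_i\langle\bm\xi_i,\bm Z\rangle^{hk}$ with the same weights. Hence positive $\bm q$-Symmetric decompositions of $p$ of length $r$ correspond exactly to positive Waring decompositions of $\psi_{\bm q}(p)$ of length $r$. By \cref{prop:waringIFFcoeffsmomentsequence}, these in turn correspond to $r$-atomic positive measures $\sum_i\lambda_i\delta_{\bm\xi_i}$ representing the homogeneous moment sequence $\bm c$, with distinct projective nodes. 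It therefore suffices to prove both implications for $\bm c$ and $H$.

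For the converse direction, assume $c_{\bm\gamma}=\sum_{i=1}^s\lambda_i\bm\xi_i^{\bm\gamma}$ with $\lambda_i>0$. Then $H=\sum_{i=1}^s\lambda_i v_iv_i^\top$, where $v_i=(\bm\xi_i^{\bm\beta})_{\bm\beta\in M_{hk/2,m+1}}$. This gives $H\succeq0$ and $\rank H\le s$ at once.

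For the direct direction, I dehomogenize with respect to $Z_0$. Write $d=hk/2$. Monomials $\bm Z^{\bm\beta}$ with $\bm\beta\in M_{d,m+1}$ correspond to affine monomials $z^{\bm\beta'}$ of degree $\le d$ in $z_j=Z_j/Z_0$, via $\bm\beta'=(\beta_1,\dots,\beta_m)$. Under this identification $H$ becomes the truncated moment matrix $M_d(\bm m)$ of the affine sequence $m_{\bm\beta'}=c_{(2d-|\bm\beta'|,\bm\beta')}$. The multiples of $Z_0$ in degree $d$ correspond to affine monomials of degree $\le d-1$, so $H_0=M_{d-1}(\bm m)$. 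The hypotheses then read $M_d(\bm m)\succeq0$ and $\rank M_d(\bm m)=\rank M_{d-1}(\bm m)=r$, which is precisely a flat positive semidefinite truncated moment matrix. The Curto--Fialkow flat extension theorem \cite{curtoFialkowTruncatedMP} gives a unique $r$-atomic representing measure $\sum_{i=1}^r\lambda_i\delta_{\bm x_i}$, with $\lambda_i>0$ and distinct $\bm x_i\in\RR^m$.

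Setting $\bm\xi_i=(1,\bm x_i)$, the points $[\bm\xi_i]$ are distinct in $\PP^m(\RR)$, and rehomogenizing gives $c_{\bm\gamma}=\sum_i\lambda_i\bm\xi_i^{\bm\gamma}$ for every $\bm\gamma\in M_{2d,m+1}$. By \cref{prop:waringIFFcoeffsmomentsequence} this yields a positive Waring decomposition of $\psi_{\bm q}(p)$ of length $r$, and by \cref{QSymMainTheorem} a positive $\bm q$-Symmetric decomposition of $p$ of the same length $r$.

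The main obstacle I expect is the bookkeeping in this dehomogenization. I must verify that every entry of the homogeneous matrix, namely $c_{\bm\beta+\bm\beta'}$ with $|\bm\beta+\bm\beta'|=2d$, is the affine moment $m_{(\bm\beta+\bm\beta')'}$. This holds because the $Z_0$-exponent is determined by the total degree. I also need the submatrix indexed by the multiples of $Z_0$ to match $M_{d-1}$ exactly, not up to some permutation that might lose rows or columns. Finally, I must check that the multinomial normalization $\binom{hk}{\bm\gamma}$ in $\psi_{\bm q}(p)$ matches the convention of \cref{prop:waringIFFcoeffsmomentsequence}, which it does by construction.
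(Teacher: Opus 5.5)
Your proposal is correct and takes essentially the same route as the paper's proof. You transfer the problem to $\psi_{\bm q}(p)$ via the main correspondence theorem and the moment-sequence reformulation, dehomogenize at $Z_0$ so that $H$ and $H_0$ become $M_d$ and $M_{d-1}$, apply the Curto--Fialkow flat extension theorem (which the paper cites in Laurent's formulation, Thms.~5.29--5.30) to obtain an $r$-atomic measure, and for the converse write $H=\sum_i\lambda_i v_iv_i^\top$; your explicit check that the nodes $(1,\bm x_i)$ are distinct projective points is a small detail the paper leaves implicit.
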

\begin{proof}
    Suppose that $\rank(H_0)=\rank(H)$. By \cref{QSymMainTheorem}, the statement reduces to showing that $\psi_{\bm q}(p)$ has a positive Waring decomposition of length $r=\rank(H)$. $H$ is identified with the quadratic form on $\S^{hk/2}(\RR^{m+1})$ having $H$ as the associated matrix with respect to the standard monomial basis $\{\bm Z^{\bm\eta}\mid \bm\eta\in M_{hk/2,m+1}\}$. Via dehomogenization, setting $Z_0=1$, $H$ is now indexed by all monomials of degree at most $hk/2$, in the $m$ variables $Z_1,\ldots,Z_m$. We now apply results on the theory of flat extensions of moment matrices to the matrix $H$.
    Observe that the principal submatrix $H_0$ of $H$ that, prior to dehomogenization, was indexed by the multiples of $Z_0$, after dehomogenization corresponds to the matrix indexed by exactly all monomials of degree at most $hk/2-1$, in the $m$ variables $Z_1,\ldots,Z_m$. With the goal of applying \cite[Thm. 5.29]{LaurentSOSMomentOpt}, note that $H_0$ is the truncated moment matrix of the tuple $\bm c$ of order $hk/2-1$ and $H$ is the truncated moment matrix of the tuple $\bm c$ of order $hk/2$. By assumption, the two matrices have the same rank and $H$ is PSD. Hence, by \cite[Thm. 5.29]{LaurentSOSMomentOpt}, $\bm c$ is the truncated moment sequence of an $r$-atomic measure $\sum_{i=1}^r\lambda_i\delta_{\bm\xi_i}$, where $\lambda_1,\ldots,\lambda_r>0$, $\bm\xi_1,\ldots,\bm\xi_r\in\RR^m$, $\delta_{\bm\xi_i}$ is the Dirac measure centered in $\bm\xi_i$ and, by \cite[Thm. 5.30]{LaurentSOSMomentOpt}, $r=\rank(H)$. Equivalently,
    \begin{equation}\label{pgammaExpression}
         c_{\bm\gamma}=\sum_{i=1}^r\lambda_i(1,\xi_{i,1},\ldots,\xi_{i,m})^{\bm\gamma}=\sum_{i=1}^r\lambda_i\prod_{j=1}^m\xi_{i,j}^{\gamma_j}, \quad\text{for all }\bm\gamma=(\gamma_0,\ldots,\gamma_m)\in M_{hk,m+1}.
    \end{equation}
    Finally, by \cref{prop:waringIFFcoeffsmomentsequence}, \cref{pgammaExpression} is equivalent to $\psi_{\bm q}(p)$ having the positive Waring decomposition $\psi_{\bm q}(p)=\sum_{i=1}^r \lambda_i\langle (1,\xi_{i,1},\ldots,\xi_{i,m}),\bm Z\rangle^{hk}$.
    By \cref{QSymMainTheorem}, we conclude that $p$ has the positive $\bm q$-Symmetric decomposition $p=\sum_{i=1}^r \lambda_i\langle \bm q(1,\xi_{i,1},\ldots,\xi_{i,m}),\bm X\rangle^{k}$.

    Conversely, if $p$ has the positive $\bm q$-Symmetric decomposition $p=\sum_{i=1}^s \lambda_i\langle \bm q(\bm\xi_i),\bm X\rangle^{k}$, where $s\in\NN,\,\lambda_1,\ldots,\lambda_s>0$ and $\bm\xi_1,\ldots,\bm\xi_s\in\RR^{m+1}$ correspond to different points in $\PP^{m}(\RR)$, then by \cref{QSymMainTheorem} we have $\psi_{\bm q}(p)=\sum_{i=1}^s\lambda_i\langle\bm \xi_{i},\bm Z\rangle^{hk}$. By \cref{prop:waringIFFcoeffsmomentsequence}, it holds that $c_{\bm\gamma}=\sum_{i=1}^s\lambda_i\bm\xi_i^{\bm\gamma}$ for all $\bm\gamma\in M_{hk,m+1}$. Observe that for all $\bm v=(v_{\bm\eta})_{\bm\eta\in M_{hk/2,m+1}}\in \RR^{D(hk/2,m+1)}$, we have
    \begin{equation*}
        \bm v^\top  H\bm v=\hspace{-0.2cm}\sum_{\bm\eta,\bm\theta\in M_{hk/2,m+1}}\hspace{-0.4cm}v_{\bm\eta}v_{\bm\theta}c_{\bm\eta+\bm\theta}=\hspace{-0.2cm}\sum_{\bm\eta,\bm\theta\in M_{hk/2,m+1}}\hspace{-0.4cm}v_{\bm\eta}v_{\bm\theta}\left(\sum_{i=1}^s\lambda_i\bm\xi_i^{\bm\eta+\bm\theta}\right)=\sum_{i=1}^s\lambda_i\Big(\hspace{-0.2cm}\sum_{\bm\eta\in M_{hk/2,m+1}}\hspace{-0.3cm}v_{\bm\eta}\bm\xi_i^{\bm\eta}\Big)^2\geq0,
    \end{equation*}
    proving that $H$ is PSD. Finally, since
    \begin{equation*}
        H=\left(\sum_{i=1}^s\lambda_i\bm\xi_i^{\bm\eta+\bm\theta}\right)_{\bm\eta,\bm\theta\in M_{hk/2,m+1}},
    \end{equation*}
    we can define $\bm u_i=(\bm\xi_i^{\bm\eta})_{\bm\eta\in M_{hk/2,m+1}}\in\RR^{D(hk/2,m+1)}$ and it is clear that $H=\sum_{i=1}^s\lambda_i\bm u_i\bm u_i^\top $, so $\rank(H)\leq s$.
\end{proof}
In the proof of \cref{FlatExtensionForDecomps}, we used \cref{QSymMainTheorem} and reduced the problem to showing that, under the flatness condition $\rank(H)=\rank(H_0)$, positive-semidefiniteness of $H$ implies that $\psi_{\bm q}(p)$ has a positive Waring decomposition. Conversely, any positive Waring decomposition of $\psi_{\bm q}(p)$ implies that $H$ is PSD. It should be noted that, in the binary case (i.e. for $m=1$), a proof of this result can be found in \cite[Corollary 6.14(ii).]{reznick-SOEP}. Furthermore, a similar result to \cite[Thm. 5.30]{LaurentSOSMomentOpt}, in the binary case, is also proved in \cite[Thm. 4.6]{reznick-SOEP} using different techniques.

\subsection{Waring decompositions on toric varieties}
In this section, we study the special case where ${\bm q}$ consists exclusively of monomials. We refer to such forms as monomial $\bm q$-Symmetric tensors. Observe that in this case $V_{\bm q}$, parametrized by a monomial map, is a toric variety and $\bm q$-Symmetric decompositions correspond to Waring decompositions on the toric variety $V_{\bm q}$. Let ${\bm q}=(\bm Z^{\bm\beta_i}\mid\bm\beta_i\in M_{h,m+1}\text{ for }i=0,\ldots,n)$. In this setting, the weight function becomes
\begin{equation}\label{WeightFunctionMonomials}
        \begin{aligned}
            W_{\bm q}:\S^k(\KK^{n+1})&\longrightarrow \S^{hk}(\KK^{m+1})\\
            \bm X^{\bm \alpha}&\longmapsto W_{\bm q}(\bm X^{\bm \alpha}):=\prod_{i=0}^n (\bm Z^{\bm\beta_i})^{\alpha_i}
        \end{aligned}
    \end{equation}
Let $\W=(W_{\bm\gamma_i,\bm\alpha_j})_{1\leq i\leq t,1\leq j\leq s}$ be the matrix associated to $W_{\bm q}$ with respect to the monomial bases, i.e. $W_{\bm q}(\bm X^{\bm\alpha})=\sum_{\bm\gamma\in M_{hk,m+1}}W_{\bm\gamma,\bm\alpha} \bm Z^{\bm\gamma}$. For this choice of ${\bm q}$, the matrix entries are given by
\begin{equation*}
    W_{\bm\gamma,\bm\alpha}=
    \begin{cases*}
      $1$ & if $\bm\gamma=\alpha_0\bm\beta_0+\cdots+\alpha_n\bm\beta_n$ \\
      $0$        & otherwise
    \end{cases*}
    \quad\text{for all }\bm\alpha\in M_{k,n+1},\;\bm\gamma\in M_{hk,m+1}.
  \end{equation*}
Defining $\phi_{\bm q}:=A_k^{-1}\circ W_{\bm q}^*\circ A_{hk}$, the set $\B$ takes the form
\begin{equation*}
    \B:=\Bigg\{b_{\bm\gamma}=\binom{hk}{\bm\gamma}^{-1}\sum_{\substack{\bm\alpha\in M_{k,n+1}\\\alpha_0\bm\beta_0+\cdots+\alpha_n\bm\beta_n=\bm\gamma}}\binom{k}{\bm\alpha}\bm X^{\bm\alpha}\mid\bm\gamma\in M_{hk,m+1}\Bigg\}
\end{equation*}
and is again linearly independent under surjectivity of $W_{\bm q}$.
\begin{definition}
    Let $p\in \S^k(\KK^{n+1})$. When ${\bm q}$ consists of monomials $(\bm Z^{\bm\beta_i}\mid\bm\beta_i\in M_{h,m+1}\text{ for }i=0,\ldots,n)$, a ${\bm q}$-Symmetric decomposition of $p$ over $\KK$ takes the form
    \begin{equation*}
        p=\sum_{i=1}^r\lambda_i\Big(\sum_{j=0}^n\bm\xi_i^{\bm\beta_j}X_j\Big)^k,
    \end{equation*}
    for some $r\in\NN,\lambda_1,\ldots,\lambda_r\in\KK\setminus\{0\}$ and $\bm\xi_1,\ldots,\bm\xi_r\in\KK^{m+1}$ corresponding to different points in $\PP^{m}(\KK)$ and it is called \emph{monomial $\bm q$-Symmetric decomposition} over $\KK$.
\end{definition}
For monomial $\bm q$-Symmetric tensors we obtain the following corollary of \cref{QSymMainTheorem}.
\begin{corollary}\label{QSymMonomialMainTheorem}
    Assume $W_{\bm q}$ is surjective. The following statements hold:
    \begin{enumerate}
    \item There is a one-to-one correspondence between length $r$ monomial $\bm q$-Symmetric decompositions
    \begin{equation*}
        p=\sum_{i=1}^r\lambda_i\Big(\sum_{j=0}^n \bm\xi_i^{\bm\beta_j}X_j\Big)^k
    \end{equation*}
    of $p\in \bm q\text{-Sym}_{k,n+1}$, over $\KK$ and length $r$ Waring decompositions
    \begin{equation*}
        \psi_{\bm q}(p)=\sum_{i=1}^r\lambda_i\langle\bm \xi_{i},\bm Z\rangle^{hk}
    \end{equation*}
    of $\psi_{\bm q}(p)$ over $\KK$, where $\lambda_1,\ldots,\lambda_r\in\KK\setminus\{0\}$ and
    $\bm\xi_1,\ldots,\bm\xi_r\in\KK^{m+1}$ correspond to different points in $\PP^{m}(\KK)$.
    \item The minimal length of a monomial $\bm q$-Symmetric decomposition of  any $p\in \bm q\text{-Sym}_{k,n+1}$ over $\KK$ is equal to the Waring rank of $\psi_{\bm q}(p)$ over $\KK$.
\end{enumerate}
\end{corollary}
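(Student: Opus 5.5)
This corollary is a direct specialization of \cref{QSymMainTheorem}, so the plan is to check that monomial $\bm q$ satisfy its hypotheses and that both sides of the correspondence take the stated form. The hypothesis of \cref{QSymMainTheorem} is surjectivity of $W_{\bm q}$, which is assumed here. Hence $\psi_{\bm q}$ is well defined on $\bm q$-$\Sym_{k,n+1}$, and \cref{NoBasePoints} applies: the monomial map has no base points and is injective on $\PP^m(\KK)$.

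The first step is to rewrite a general $\bm q$-Symmetric decomposition in the monomial case. For $\bm q=(\bm Z^{\bm\beta_0},\ldots,\bm Z^{\bm\beta_n})$ we have $q_j(\bm\xi_i)=\bm\xi_i^{\bm\beta_j}$. Therefore
\begin{equation*}
\langle \bm q(\bm\xi_i),\bm X\rangle^k=\Big(\sum_{j=0}^n\bm\xi_i^{\bm\beta_j}X_j\Big)^k .
\end{equation*}
So a monomial $\bm q$-Symmetric decomposition is exactly a $\bm q$-Symmetric decomposition in the sense of \cref{QSymDecomposition}, with the same $r$, the same weights $\lambda_i$ and the same nodes $\bm\xi_i$.

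Item (1) then follows by substituting this identity into item (1) of \cref{QSymMainTheorem}. That gives the length-preserving bijection $\sum_i\lambda_i(\sum_j\bm\xi_i^{\bm\beta_j}X_j)^k\leftrightarrow\sum_i\lambda_i\langle\bm\xi_i,\bm Z\rangle^{hk}$. Item (2) follows in the same way from item (2) of \cref{QSymMainTheorem}: the minimum over monomial $\bm q$-Symmetric decompositions is the minimum over $\bm q$-Symmetric decompositions, which equals the Waring rank of $\psi_{\bm q}(p)$.

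The only point needing care is that distinctness of the points $[\bm\xi_i]\in\PP^m(\KK)$ transfers to distinctness of the nodes $[\bm q(\bm\xi_i)]$, and that none of these nodes is zero. This is exactly what \cref{NoBasePoints} guarantees under surjectivity, and the proof of \cref{QSymMainTheorem} already uses it. Beyond that, no genuine obstacle is expected.
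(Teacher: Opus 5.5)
Your proposal is correct and is the same argument the paper intends: it states this corollary without a separate proof, as an immediate specialization of Theorem~\ref{QSymMainTheorem} via the identity $q_j(\bm\xi_i)=\bm\xi_i^{\bm\beta_j}$. The care you take over distinct nodes and absent base points is already handled by Proposition~\ref{NoBasePoints} inside that theorem's proof, as you note.
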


We illustrate the definitions above with two examples where \cref{SurjectivityConditions} item~\ref{AllMonomials} applies, so the corresponding $W_{\bm q}$, as in \cref{WeightFunctionMonomials}, is surjective.
\begin{examples}
    \begin{enumerate}
        \item Let $m=1, h=n$ and $q_i(Z_0,Z_1)=Z_0^{n-i}Z_1^i$ be the elements of the standard monomial basis, so $W_{\bm q}(\bm X^{\bm\alpha}):=Z_0^{nk-\sum_{i=0}^n i\alpha_i}Z_1^{\sum_{i=0}^n i\alpha_i}$ and $W_{\bm q}$ is surjective. In this case $\B$ simplifies to
        \begin{equation*}
            \B=\left\{b_j=\binom{nk}{j}^{-1}\sum_{\sum_{i=0}^n i\alpha_i=j}\binom{k}{\bm\alpha}\bm X^{\bm\alpha}\mid j=0,\ldots,nk\right\}
        \end{equation*}
        Moreover, the space $\bm q\text{-Sym}_{k,n+1}=\mathrm{span}(\B)$ is called \emph{vector space of Hankel tensors} in \cite{qiHankelTensors}. In particular, if $p=\sum_{j=0}^{nk}c_jb_j\in\bm q\text{-Sym}_{k,n+1}$, the vector $\bm c=(c_0,\ldots,c_{nk})$ is $\bm v$ in the notation of \cite[(1.1)]{qiHankelTensors}. Furthermore, the binary form $\psi_{\bm q}(p)$ corresponds to the \emph{associated plane tensor of $p$} in the terminology of \cite{qiHankelTensors}.
        \item Let $m=2,\;n=5,\;k=2,\; h=2$ and ${\bm q}$ as follows
        \begin{equation*}
            \begin{gathered}
                q_0=Z_0Z_1;\quad q_1=Z_1^2;\quad q_2=Z_0Z_2;\quad
                q_3=Z_2^2;\quad q_4=Z_1Z_2;\quad q_5=Z_0^2.
            \end{gathered}
        \end{equation*}
        For this choice of ${\bm q}$, surjectivity of $W_{\bm q}$ follows from \cref{AllMonomials} of \cref{SurjectivityConditions}. Furthermore, the basis $\B=\{b_i\in\S^2(\KK^{6})\mid i=1,\ldots,15\}$ of $\bm q\text{-Sym}_{2,6}$ has the following expression.
        \begin{align*}
            b_1 &= X_5^2, & b_2 &=\frac{X_0X_5}{2}, & b_3 &=\frac{X_2X_5}{2},\\
            b_4 &= \frac{X_0^2+2X_1X_5}{6}, & b_5 &=\frac{X_0X_2+X_4X_5}{6}, & b_6 &=\frac{X_2^2+2X_3X_5}{6},\\
            b_7 &= \frac{X_0X_1}{2}, & b_8 &=\frac{X_0X_4+X_1X_2}{6}, & b_9 &=\frac{X_0X_3+X_2X_4}{6},\\
            b_{10} &= \frac{X_2X_3}{2}, & b_{11} &=X_1^2, & b_{12} &=\frac{X_1X_4}{2},\\
            b_{13} &= \frac{2X_1X_3+X_4^2}{6}, & b_{14} &=\frac{X_3X_4}{2}, & b_{15} &=X_3^2.
        \end{align*}
    \end{enumerate}
\end{examples}

\subsection{Waring decompositions on rational curves}\label{SectionWaringDecomps}
In this section, let $m=1$. In this setting, we refer to $\bm q$-Symmetric tensors as \emph{binary} $\bm q$-Symmetric tensors. In this case $V_{\bm q}$, parametrized by binary forms, is a rational curve and $\bm q$-Symmetric decompositions correspond to Waring decompositions on the rational curve $V_{\bm q}$.
The aim of this section is to deduce bounds on the minimal length and uniqueness properties, up to permutation and rescaling, of a $\bm q$-Symmetric decomposition of a binary $\bm q$-Symmetric tensor. Furthermore, the first item of the following proposition improves by one the bound in \cite[Thm. 4.1]{qiHankelTensors}.
Our approach has the benefit of highlighting the connections between $\bm q$-Symmetric decompositions of a binary $\bm q$-Symmetric tensor $p$ and Waring decompositions of the binary form $\psi_{\bm q}(p)$.

\begin{proposition}\label{RankBound}
Assume $W_{\bm q}$ is surjective. Let $p\in\bm q\text{-Sym}_{k,n+1}$.
\begin{enumerate}
    \item The minimal length $r$ of a $\bm q$-Symmetric decomposition over $\KK$ of $p$ is at most $hk$.
    \item If $p$ has a $\bm q$-Symmetric decomposition over $\KK$ of length $r\leq\lfloor \frac{hk+1}{2}\rfloor$, then the decomposition is \emph{unique} and $r$ is the minimal length of a $\bm q$-Symmetric decomposition of $p$ over $\KK$.
    \item Let $hk=2r-1$ and $\KK=\CC$. Then if $p$ is generic (i.e. $\psi_{\bm q}(p)$ is generic in $\S^{hk}(\CC^2)$), the minimal length of a $\bm q$-Symmetric decomposition of $p$ over $\CC$ is equal to $r$ and there exists \emph{a unique} length $r$ $\bm q$-Symmetric decomposition of $p$ over $\CC$.
    \item Let $hk=2r-2$ and $\KK=\CC$. Then if $p$ is generic (i.e. $\psi_{\bm q}(p)$ is generic in $\S^{hk}(\CC^2)$), the minimal length of a $\bm q$-Symmetric decomposition of $p$ over $\CC$ is equal to $r$ and there exist \emph{infinitely many} length $r$ $\bm q$-Symmetric decompositions of $p$ over $\CC$.
\end{enumerate}
\end{proposition}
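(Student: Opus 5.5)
All four items will follow from \cref{QSymMainTheorem} once they are established for the binary form $f:=\psi_{\bm q}(p)\in\S^{hk}(\KK^2)$. Since $W_{\bm q}$ is surjective, \cref{QSymMainTheorem} gives a length-preserving bijection between $\bm q$-Symmetric decompositions of $p$ and Waring decompositions of $f$. Hence minimal lengths agree, uniqueness transfers, and so does the count of minimal decompositions. Genericity of $p$ is defined through $f$, and $\psi_{\bm q}$ is an isomorphism onto $\S^{hk}(\CC^2)$. So genericity also transfers, and everything reduces to classical facts on binary forms of degree $d:=hk$, which I will prove via apolarity (\cref{ApolarityLemma}) and catalecticants.

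\textbf{Item 1.} I will use the classical Sylvester description: $f^\perp=(g_1,g_2)$ is a complete intersection with $\deg g_1+\deg g_2=d+2$ and $\deg g_1\le\deg g_2$. A decomposition of length $r$ corresponds to a squarefree form of degree $r$ in $f^\perp$, since in $\PP^1$ the ideal $I(\Xi)$ is principal, generated by a squarefree form vanishing on $\Xi$. I will show that $(f^\perp)_d$ contains a squarefree element. The natural choice is $\ell^{d}$ minus a generic element, or more concretely a generic element of $(f^\perp)_d$, which has codimension $1$ in $\S^d$. A hyperplane of $\S^d(\KK^2)$ not contained in the discriminant hypersurface contains squarefree forms with distinct real roots when $\KK=\RR$. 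For $\KK=\RR$ I plan to argue directly: the hyperplane $\{g:\langle g,f\rangle_d=0\}$ meets the open set of forms with $d$ distinct real roots, because such forms span $\S^d$ (products of distinct linear forms span). This is the step I expect to be the main obstacle over $\RR$. A cleaner alternative is to take $g=\prod_{i=1}^d(X_1-t_iX_0)$ and tune one parameter $t_d$ so that $\langle g,f\rangle_d=0$ while the roots stay distinct. This works since $\langle g,f\rangle$ is affine in $t_d$, unless the coefficient vanishes; in that case I perturb the other $t_i$. A further check is needed that all weights $\lambda_i$ are nonzero. If some weight vanishes, the decomposition is shorter, which is still fine for an upper bound.

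\textbf{Item 2.} Suppose $f$ has two distinct decompositions with node sets $\Xi,\Xi'$, each of size at most $\lfloor (d+1)/2\rfloor$. Then $f$ lies in the span of $\{\ell_\xi^d\}$ over $\Xi\cup\Xi'$, a set of at most $d+1$ points. Powers $\ell^d$ of pairwise independent linear forms, at most $d+1$ of them, are linearly independent (Vandermonde). Hence the two expressions coincide, and so do the node sets and weights. Minimality follows the same way: any shorter decomposition would be a second one with $|\Xi\cup\Xi'|\le d+1$.

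\textbf{Items 3 and 4.} Over $\CC$, generic $f$ has a most nearly square catalecticant of maximal rank. For $d=2r-1$ the kernel of $C_f^{r}$, which consists of forms of degree $r$, is one-dimensional, spanned by a generic, hence squarefree, form $g$. This gives a length $r$ decomposition. Uniqueness follows from item 2, since $r=\lfloor (d+1)/2\rfloor$. For $d=2r-2$, $C_f^{r-1}$ is square and invertible, so no length $\le r-1$ decomposition exists. Meanwhile $(f^\perp)_r$ has dimension $r+1-(r-1)=2$. A generic member of this pencil is squarefree, since the pencil has no base points for generic $f$, and Bertini applies. This yields infinitely many distinct length $r$ decompositions.
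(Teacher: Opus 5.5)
Your proposal is correct. It shares the paper's reduction to $f=\psi_{\bm q}(p)$ via \cref{QSymMainTheorem}, and your Vandermonde argument for item~2 is exactly the paper's.

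Items 1, 3 and 4 take a different route. For item~1 over $\CC$, the paper changes coordinates so that $c_0=c_{hk}$ and exhibits the explicit squarefree apolar form $Z_1^{hk}-Z_0^{hk}$. Over $\RR$ it only cites Reznick and Comon--Ottaviani, and items 3 and 4 are cited as the theorems of Sylvester and Gundelfinger. You instead look for a member of the hyperplane $(f^\perp)_{hk}=\{g:\langle g,f\rangle_{hk}=0\}$ with $hk$ distinct roots in $\KK$. This treats $\RR$ and $\CC$ uniformly and gives a self-contained proof of the real bound. You also reprove Sylvester and Gundelfinger through catalecticant ranks and the pencil $(f^\perp)_r$. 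The paper's route is shorter; yours makes the proposition independent of the cited literature.

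A few steps need tightening.
\begin{itemize}
\item Your first real argument does not work as stated. An open set $U=-U$ that spans $\S^{d}(\RR^2)$ need not meet a given hyperplane; you would need connectedness of the set of forms with $d$ distinct real roots, which is true but must be shown.
\item The tuning argument works, but ``the roots stay distinct'' needs a proof. Suppose the solved $t_d$ coincided with some $t_i$ for all parameters in an open set. A polynomial identity then gives $(X_1-tX_0)^2\,\S^{d-2}\subseteq(f^\perp)_d$ for all $t$. These squares span $\S^2$, so $(f^\perp)_d=\S^d$ and $f=0$.
\item The coefficient $\langle GX_0,f\rangle_d$ vanishes identically only when $f$ is a multiple of the $d$-th power of the second variable. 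Perturbing the $t_i$ cannot help there, but that case has rank one.
\item You should also note that the weights are real, since the $d$ real powers are linearly independent.
\item In item~3, $(f^\perp)_r$ is $\ker C_f^{r-1}$, not $\ker C_f^{r}=(f^\perp)_{r-1}=0$.
\item The phrase ``generic, hence squarefree'' needs one line of justification. On the open set where the catalecticant has maximal rank, the generator of $(f^\perp)_r$ is given by maximal minors, so squarefreeness is a Zariski-open condition on $f$. It is nonempty because $\sum_{i=1}^r\ell_{\xi_i}^{2r-1}$ with distinct $\xi_i$ is a witness.
\item In item~4, base-point-freeness of the pencil holds for every $f\neq0$, not only generic $f$, since $f^\perp=(g_1,g_2)$ is a complete intersection.
\end{itemize}
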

\begin{proof}
We prove the statements separately.
    \begin{enumerate}
        \item Using \cref{QSymMainTheorem}, we can reformulate the statement as a property of $\psi_{\bm q}(p)$, thus we have to prove that \emph{$\psi_{\bm q}(p)$ has Waring rank $r$ over $\KK$ which is at most $hk$}.
        We give a short proof for $\KK=\CC$. For the general case, see \cite[Thm. 4.10]{reznickBinaryForms} and \cite[Prop. 2.1]{ComonOttavianiBinaryForms} for $\KK=\RR$.
        Let $\psi_{\bm q}(p)=\sum_{j=0}^{hk}\binom{hk}{j}c_jZ_0^{hk-j}Z_1^j$. After a change of variables, we can assume that either $c_{0}=c_{{hk}}=0$ or $c_{0}=c_{{hk}}=1$. In both cases, $g(Z_0,Z_1)=Z_1^{hk}-Z_0^{hk}=\prod_{l=0}^{hk-1}(Z_1-e^{\frac{2\pi il}{hk}}Z_0)\in \psi_{\bm q}(p)^\perp$. By \cref{ApolarityLemma}, $\psi_{\bm q}(p)$ has Waring rank over $\CC$ less than or equal to $hk$ \cite[Corollary 2.7]{huang2025waringproblemcomplexbinary}.
        \item Using \cref{QSymMainTheorem}, we can reformulate the statement as a property of $\psi_{\bm q}(p)$, thus we have to prove that \emph{if $\psi_{\bm q}(p)$ has a Waring decomposition over $\KK$ of length $r\leq\lfloor\frac{hk+1}{2}\rfloor$, then the Waring decomposition is unique and $r$ is the Waring rank over $\KK$ of $\psi_{\bm q}(p)$}.
        Suppose that $\psi_{\bm q}(p)$ has two Waring decompositions over $\KK$ of lengths $r$ and $s\leq r$
        \begin{equation*}
            \psi_{\bm q}(p)=\sum_{i=1}^r \lambda_i(\xi_{i,0}Z_0+\xi_{i,1}Z_1)^{hk};\quad\psi_{\bm q}(p)=\sum_{j=1}^s \mu_j(\rho_{j,0}Z_0+\rho_{j,1}Z_1)^{hk}
        \end{equation*}
        and note that each linear form corresponds to a unique point in the projective line $\PP^1(\KK)$. After reordering the summands, assume the first $t$ projective points coincide, for some $0\leq t\leq s$ and that the remaining forms correspond to disjoint sets of projective points: $\{[\xi_{t+1,0}:\xi_{t+1,1}],\ldots,[\xi_{r,0}:\xi_{r,1}]\}\cap\{[\rho_{t+1,0}:\rho_{t+1,1}],\ldots,[\rho_{s,0}:\rho_{s,1}]\}=\emptyset$. Hence, subtracting the second decomposition from the first
        \begin{equation*}
            \sum_{l=1}^t (\lambda_l-\mu_l)(\xi_{l,0}Z_0+\xi_{l,1}Z_1)^{hk}+\hspace{-0.1cm}\sum_{i=t+1}^r \lambda_i(\xi_{i,0}Z_0+\xi_{i,1}Z_1)^{hk}-\hspace{-0.1cm}\sum_{j=t+1}^s \mu_j(\rho_{j,0}Z_0+\rho_{j,1}Z_1)^{hk}=0.
        \end{equation*}
        Since $r+s-t\leq 2r\leq hk+1$, the terms $(\xi_{1,0}Z_0+\xi_{1,1}Z_1)^{hk},\ldots,(\xi_{r,0}Z_0+\xi_{r,1}Z_1)^{hk},\allowbreak(\rho_{t+1,0}Z_0+\rho_{t+1,1}Z_1)^{hk},\ldots,(\rho_{s,0}Z_0+\rho_{s,1}Z_1)^{hk}$ are linearly independent and this forces $r=s=t$ and the two decompositions to coincide \cite[Prop. 3.5]{huang2025waringproblemcomplexbinary}.
        \item Using \cref{QSymMainTheorem}, we can reformulate the statement as a property of $\psi_{\bm q}(p)$, thus we have to prove that \emph{if $\psi_{\bm q}(p)$ is a generic binary form of odd degree $hk=2r-1$, its Waring rank over $\CC$ is $r$ and there exists a unique length $r$ Waring decomposition over $\CC$}.
        This is a classical result due to Sylvester \cite{Sylvester_James_Joseph_rank}.
        \item Using \cref{QSymMainTheorem}, we can reformulate the statement as a property of $\psi_{\bm q}(p)$, thus we have to prove that \emph{if $\psi_{\bm q}(p)$ is a generic binary form of even degree $hk=2r-2$, its Waring rank over $\CC$ is $r$ and there exist infinitely many length $r$ Waring decompositions over $\CC$}.
        This is a result first established by Gundelfinger \cite{Gundelfinger1887}. An elementary proof can be found in \cite[Corollary 3.7]{huang2025waringproblemcomplexbinary}.
    \end{enumerate}
\end{proof}
\subsection{Quadrature formulae on rational curves}\label{SectionPositiveWaringDecomp}

In this subsection we set $m=1$, $\KK=\RR$ and study positive Waring decompositions on rational curves. As was anticipated in \cref{prop:waringIFFcoeffsmomentsequence}, positive Waring decompositions of a form $p$ are strictly connected to truncated moment problems and atomic representing measures of the normalized coefficient sequence of $p$. In turn, truncated moment problems have classically been studied in the context of approximation of integrals and computation of quadrature formulae. After briefly introducing quadrature formulae, we prove two preparatory lemmata and characterize, in \cref{Hamburger}, the positive binary $\bm q$-Symmetric tensors.
Our strategy is to first reduce the decomposition problem into a truncated moment problem, and then use the theory of flat extensions of moment matrices. Finally, we apply our results to the problem of computing quadrature formulae for measures supported on rational curves.
\begin{definition}
    A \emph{quadrature formula} of strength $k\in\NN$, for a given positive Borel measure $\mu$, is a finitely atomic measure $\nu=\sum_{i=1}^r\lambda_i\delta_{\bm\xi_i}$ with $\lambda_1,\cdots,\lambda_r>0,\,\bm\xi_1,\ldots,\bm\xi_r\in\RR^n$ such that the moments of $\mu$ and $\nu$ coincide up to degree $k$. The $\bm\xi_i$ and $r$ are called the \emph{nodes} and \emph{number of nodes}, respectively.
\end{definition}
\begin{lemma}\label{GenericChangeOfVars}
    Let $p=\sum_{j=0}^{2d}\binom{2d}{j}p_jZ_0^{2d-j}Z_1^j\in\S^{2d}(\RR^2)$ be a non-zero binary form and let $H=\operatorname{Hankel}(\bm p)$ be the Hankel matrix of $\bm p=(p_j)_{j=0}^{2d}$. Set $r=\rank(H)$. Then, after a generic change of variables $(Z_0,Z_1)\mapsto (U_0,U_1)$, writing $p=\sum_{j=0}^{2d}\binom{2d}{j}p'_jU_0^{2d-j}
    U_1^j$, the Hankel matrix $H'=\operatorname{Hankel}(\bm p')$ of $\bm p'=(p'_j)_{j=0}^{2d}$ has the property that its first $r$ columns are linearly independent.
\end{lemma}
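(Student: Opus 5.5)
We will use the classical structure theory of binary forms and Hankel matrices (Sylvester), via the apolar ideal. By \cref{rem:catalecticantproperties}, $H$ is (up to the nonzero scalars relating $\binom{2d}{j}p_j$ to the apolar normalization) the middle catalecticant $C_p^{d}$, and its kernel is $(p^\perp)_d$. For binary forms, $p^\perp$ is a complete intersection $(g_1,g_2)$ with $\deg g_1+\deg g_2=2d+2$. If $\deg g_1=s\le d+1-s$, then $\rank H=\min(s,d+1)$, so $r=s$ when $r\le d$ and $r=d+1$ when $H$ is invertible. In the invertible case all $d+1$ columns are independent and there is nothing to prove. Hence assume $r\le d$. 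Then $\ker H$ consists of the degree-$d$ multiples of a unique (up to scalar) form $g=g_1$ of degree $r$.

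The key step is to translate the column condition into a condition on $g$. Identify a vector $\bm v=(v_0,\dots,v_d)$ with the form $f_{\bm v}=\sum_j v_j Z_0^{d-j}Z_1^{j}$, with the scaling matching the apolar pairing. A linear relation among the first $r$ columns of $H$ is a nonzero kernel vector supported on the indices $0,\dots,r-1$. This is a form $f\in(p^\perp)_d$ divisible by $Z_0^{d-r+1}$. Since $f=g\cdot u$ with $\deg u=d-r$, the factor $Z_0^{d-r+1}$ cannot be absorbed entirely by $u$, so $Z_0\mid g$. Conversely, $Z_0\mid g$ produces such a relation. (The exact convention for which variable appears depends on the indexing; either way, it is one coordinate variable.) Therefore, the first $r$ columns are independent if and only if $g$ does not vanish at the coordinate point $[0:1]$ (or $[1:0]$), i.e. $g$ is not divisible by the corresponding coordinate linear form.

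Finally, under a linear change of variables $(Z_0,Z_1)\mapsto(U_0,U_1)$, apolarity is equivariant: the apolar ideal of $p$ written in the new coordinates is the transformed ideal, generated by the transformed $g$. The rank $r$ is also invariant. The transformed $g$ vanishes at the relevant coordinate point only if the preimage of that point is one of the at most $r$ roots of $g$ in $\PP^1$. This is a proper Zariski-closed condition on the change of variables, so a generic change of variables avoids it, which proves the lemma.

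The main obstacle is the bookkeeping in the second step: matching the Hankel matrix with binomially normalized entries $p_j$ to the catalecticant and its kernel, and checking that independence of the first $r$ columns corresponds exactly to non-divisibility of $g$ by the appropriate coordinate form. I would verify this first on $p=Z_0^{2d}$, where $r=1$, before writing the general argument. The structure theorem for $p^\perp$ for binary forms would be quoted from the literature (e.g. \cite{IK}).
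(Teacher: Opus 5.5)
Your proposal is correct and follows essentially the same route as the paper. Both arguments use the complete-intersection structure of $p^\perp$ for binary forms to write $\ker H'$ as the degree-$d$ multiples of a single form $s$ of degree $r$. Both then show that a nonzero kernel vector supported on the first $r$ columns forces $U_0\mid s$, and a generic change of variables avoids this. The paper phrases that step via the $U_1$-degree, while you use the $Z_0$-adic valuation; the two are equivalent.

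Some bookkeeping to tidy up:
\begin{itemize}
\item With the paper's normalization, $H$ is exactly the catalecticant matrix $C_p^{d}$, so no rescaling is needed.
\item The relevant coordinate form is $U_0$, i.e.\ the condition is vanishing at $[0:1]$; you do not need to hedge between $[0:1]$ and $[1:0]$.
\item Your degree condition should read $s\le 2d+2-s$, i.e.\ $s\le d+1$. This gives $\rank(H)=s$ directly, so the $\min(s,d+1)$ is unnecessary.
\end{itemize}
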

\begin{proof}
    If $H$ has full rank, we are done and no change of variables is needed. Assume instead that $r<d+1$ and consider any change of variables $(Z_0,Z_1)\mapsto (U_0,U_1)$. We identify $H'$ with the quadratic form on $\S^{d}(\RR^2)$ having $H'$ as associated matrix with respect to the standard monomial basis $\{U_0^{d-i}U_1^i\mid i=0,\ldots,d\}$ in the variables $U_0,\,U_1$. Linear independence of the first $r$ columns of $H'$ is equivalent to the condition
    \begin{equation}\label{kernelConditionProp3.21}
        \ker(H')\cap\mathrm{span}\left\{U_0^{d},U_0^{d-1}U_1,\ldots,U_0^{d-r+1}U_1^{r-1}\right\}=\{0\}.
    \end{equation}
    By \cite[Thm. 1.44]{IK}, the apolar ideal $p^\perp$ of $p$ is generated by two polynomials $s$ and $t$ where $\deg(s)=l$, $\deg(t)=2d+2-l$ and $l\leq 2d+2-l$. Since $l\leq 2d+2-l$, we have $l\leq d+1$. If $l=d+1$, then also $2d+2-l=d+1$, so $p_d^\perp=\{0\}$, contradicting $r<d+1$ and $\ker(H)=p_d^\perp$ (cf. \cref{rem:catalecticantproperties}). Therefore, $l\leq d$, $2d+2-l>d$ and $t$ does not contribute to $p_d^\perp$. In particular, we deduce that $p_d^\perp$ is the degree $d$ homogeneous component of the ideal generated by $s$. Hence, $\ker(H)=\mathrm{span}\left\{s(Z_0,Z_1)\bm Z^{\bm \eta}\mid \bm \eta\in M_{d-l,2}\right\}$. By $\ker(H)=p_d^\perp$, we deduce that $r=\dim(\S^{d}(\RR^2))-\dim\ker(H)=(d+1)-(d+1-l)=l$, thus $\ker(H)=\mathrm{span}\left\{s(Z_0,Z_1)\bm Z^{\bm \eta}\mid \bm \eta\in M_{d-r,2}\right\}$.
    Applying the change of variables $(Z_0,Z_1)\mapsto (U_0,U_1)$, the kernel of $H$ is mapped to the kernel of $H'$, and we deduce
    \begin{equation}\label{kernelExpression2}
        \ker(H')=\mathrm{span}\left\{s(U_0,U_1)\bm U^{\bm \eta}\mid \bm \eta\in M_{d-r,2}\right\}.
    \end{equation}
    Given \cref{kernelExpression2}, any non-zero element in $\ker(H')$ must have $s$ as a factor. If $U_0$ is chosen so that it does not divide $s$, then the intersection in \cref{kernelConditionProp3.21} is trivial. In fact, since $s$ has degree $r$ and the forms in $\mathrm{span}\left\{U_0^{d},U_0^{d-1}U_1,\ldots,U_0^{d-r+1}U_1^{r-1}\right\}$ have at most degree $r-1$ in $U_1$, none of them can be a multiple of $s$.
\end{proof}

\begin{lemma}\label{PSDnessPreserved}
    Let $p(Z_0,Z_1)=\sum_{j=0}^{2d}\binom{2d}{j}p_jZ_0^{2d-j}Z_1^j\in\S^{2d}(\RR^2)$ and let $H=\operatorname{Hankel}(\bm p)$ be the Hankel matrix of $\bm p=(p_j)_{j=0}^{2d}$. Let $T$ be an invertible $2\times2$ real matrix, $(Z_0,Z_1)\mapsto T(U_0,U_1)$ be a change of variables and let $p(T(U_0,U_1))=\sum_{j=0}^{2d}\binom{2d}{j}p'_jU_0^{2d-j}U_1^j$. If $H'=\operatorname{Hankel}(\bm p')$ is the Hankel matrix of $\bm p'=(p'_j)_{j=0}^{2d}$, then $H'$ is PSD if and only if $H$ is PSD. Moreover, $\rank(H')=\rank(H)$.
\end{lemma}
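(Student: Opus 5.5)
The plan is to interpret $H$ as a matrix of a bilinear form on $\S^d(\RR^2)$ built from the apolar structure of $p$, and to show that a change of variables acts on it by a congruence. Then Sylvester's law of inertia gives both claims at once.

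\emph{Identifying $H$ with a catalecticant.} By \cref{CatalecticantMaps}, the most nearly square catalecticant $C_p^d$ sends $f\in\S^d(\RR^2)$ to the functional $g\mapsto A_{2d}(p)(fg)=\langle p,fg\rangle_{2d}$. On monomials,
\[
\langle p, Z_0^{2d-i-j}Z_1^{i+j}\rangle_{2d}=\binom{2d}{i+j}^{-1}\binom{2d}{i+j}p_{i+j}=p_{i+j}.
\]
So the symmetric bilinear form $B_p(f,g):=\langle p,fg\rangle_{2d}$ on $\S^d(\RR^2)$ has matrix exactly $H$ in the monomial basis. Likewise, with $\tilde p:=p\circ T$, the form $B_{\tilde p}$ has matrix $H'$ in the monomial basis in $U_0,U_1$.

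\emph{Equivariance of the apolar product.} The key identity to establish is
\[
\langle p\circ T, g\rangle_{2d}=\langle p, g\circ T^{\top}\rangle_{2d}\quad\text{for all } g\in\S^{2d}(\RR^2).
\]
To check it I would use linearity together with the fact that powers of linear forms $\langle \bm v,\bm Z\rangle^{2d}$ span $\S^{2d}$. For such forms, $\langle \langle \bm v,\bm Z\rangle^{2d}, g\rangle_{2d}=g(\bm v)$, which is the same computation as in \cref{DualCone}. Then
\[
\langle \bm v, T\bm U\rangle^{2d}=\langle T^\top\bm v,\bm U\rangle^{2d},
\]
and both sides reduce to $g(T^\top \bm v)$. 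Hence
\[
B_{\tilde p}(f,g)=\langle p,(fg)\circ T^\top\rangle_{2d}=B_p(f\circ T^\top,g\circ T^\top).
\]
The map $f\mapsto f\circ T^\top$ is a linear automorphism $S$ of $\S^d(\RR^2)$ because $T$ is invertible. In matrix form this gives $H'=M^\top H M$ with $M$ the invertible matrix of $S$.

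\emph{Conclusion.} Congruence by an invertible $M$ preserves both rank and positive semidefiniteness, since $\bm v^\top H'\bm v=(M\bm v)^\top H(M\bm v)$. The reverse implication follows by using $T^{-1}$. The only real obstacle is the bookkeeping in the equivariance step, namely whether $T$ or $T^\top$ appears. This does not affect the conclusion, since either one is invertible.
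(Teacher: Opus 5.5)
Your proposal is correct and follows essentially the same route as the paper. The paper also treats $H$ and $H'$ as Gram matrices of the moment functional $L$, which is exactly your $\langle p,\cdot\rangle_{2d}$; it then uses $L'(g)=L(g\circ T^\top)$ to obtain the congruence $H'=A^\top H A$, where $A$ is the matrix of $f\mapsto f\circ T^\top$. The only difference is that you justify the transformation rule $L'(g)=L(g\circ T^\top)$ through powers of linear forms, whereas the paper simply asserts it.
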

\begin{proof}
    Let $L$ and $L'$ be the homogeneous moment functionals associated respectively with $\bm p$ and $\bm p'$, so that $H=M_d^{\mathrm h}(\bm p)$ and $H'=M_d^{\mathrm h}(\bm p')$ are the truncated moment matrices of $L$ and $L'$ respectively. Since $p(TU)$ has normalized coefficient sequence $\bm p'$, we have $L'(g(X))=L(g(T^\top X))$ for every homogeneous form $g$ of degree $2d$.
    Let $\mathcal B_X=\{X_0^d,X_0^{d-1}X_1,\ldots,X_1^d\}$ be the standard monomial basis of $\S^d(\RR^2)$, and let $A$ be the invertible matrix defined by $[f(T^\top X)]_{\mathcal B_X}=A[f]_{\mathcal B_X}$, where $[\cdot]_{\mathcal B_X}$ denotes the coefficient tuple of $\cdot$ with respect to the basis $\mathcal{B}_X$. Therefore, for every $f\in\S^d(\RR^2)$,
    \begin{equation*}
        [f]_{\mathcal B_X}^{\top}H'[f]_{\mathcal B_X}=L'(f^2)=L\bigl(f(T^\top X)^2\bigr)=[f]_{\mathcal B_X}^{\top}A^\top HA[f]_{\mathcal B_X}.
    \end{equation*}
    Hence $H'=A^\top HA$ and so $H$ and $H'$ have the same rank, and $H'\succeq0$ if and only if $H\succeq0$.
\end{proof}

The following result answers \cite[Question 4.2]{qiHankelTensors} positively and characterizes positive binary $\bm q$-Symmetric tensors.

\begin{proposition}\label{Hamburger}
    Let $m=1$, $k$ even. Suppose that $W_{\bm q}$ is surjective and let $p\in{\bm q}$-$\Sym_{k,n+1}$, $\psi_{\bm q}(p)=\sum_{j=0}^{hk}\binom{hk}{j}c_jZ_0^{hk-j}Z_1^j$ and $H=\operatorname{Hankel}(\bm c)$ be the Hankel matrix of $\bm c=(c_j)_{j=0}^{hk}$. Then, $p$ has a positive ${\bm q}$-Symmetric decomposition if and only if $H$ is PSD. Furthermore, the length of the positive ${\bm q}$-Symmetric decomposition is $r=\rank(H)$.
\end{proposition}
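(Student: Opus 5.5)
The plan is to reduce, via \cref{QSymMainTheorem}, to a statement about the binary form $\psi_{\bm q}(p)\in\S^{hk}(\RR^2)$. Write $hk=2d$; since $k$ is even, $hk$ is even. By \cref{QSymMainTheorem}, positive $\bm q$-Symmetric decompositions of $p$ of length $r$ correspond bijectively to positive Waring decompositions of $\psi_{\bm q}(p)$ of length $r$, because the correspondence preserves the weights $\lambda_i$. The statement therefore becomes: $\psi_{\bm q}(p)$ has a positive Waring decomposition if and only if $H=\operatorname{Hankel}(\bm c)$ is PSD, and in that case there is one of length $\rank(H)$.

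\emph{Necessity.} This is already contained in the converse part of \cref{FlatExtensionForDecomps}. A positive decomposition of length $s$ gives $H=\sum_i\lambda_i\bm u_i\bm u_i^\top$, so $H\succeq0$ and $\rank(H)\le s$. In particular, any positive decomposition has length at least $\rank(H)$, so the length-$\rank(H)$ decomposition produced below is minimal.

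\emph{Sufficiency.} Assume $H\succeq0$ and set $r=\rank(H)$; $r\ge1$ since $p\neq0$.

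1. Apply \cref{GenericChangeOfVars} to obtain a real invertible change of variables $T$ after which the new Hankel matrix $H'$ has its first $r$ columns linearly independent. A generic real $T$ works, since the condition is only that $U_0$ does not divide $s$. By \cref{PSDnessPreserved}, $H'\succeq0$ and $\rank(H')=r$.

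2. Show the flatness condition $\rank(H'_0)=\rank(H')$, where, after dehomogenizing $U_0=1$, $H'_0$ is the leading $d\times d$ principal submatrix indexed by monomials of degree $\le d-1$. This is the step I expect to be the main obstacle.
 \begin{itemize}
 \item If $r\le d$: the first $r$ columns of $H'$ are independent and $H'\succeq0$, so the leading $r\times r$ principal block is a Gram matrix of $r$ independent vectors and hence invertible. Since that block is contained in $H'_0$, we get $r\le\rank(H'_0)\le\rank(H')=r$.
 \item If $r=d+1$ ($H$ positive definite): flatness fails, because $H'_0$ is only $d\times d$. Here I would use the classical Hamburger/Curto--Fialkow fact in the univariate case: a positive definite Hankel matrix of order $d$ admits a flat extension by choosing $c_{2d+1}$ arbitrarily and $c_{2d+2}$ large enough to make the extension rank $d+1$. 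This yields a $(d+1)$-atomic measure with moments $c_0,\dots,c_{2d}$.
 \end{itemize}
 For $r=d+1$, the number of atoms then equals $\rank(H)$ anyway, since it is bounded below by $\rank(H)$ from the necessity part. If one wants to avoid the extension argument, one could alternatively invoke \cite[Corollary 6.14]{reznick-SOEP} for the binary case.

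3. Once flatness holds, apply \cite[Thm. 5.29, 5.30]{LaurentSOSMomentOpt}, exactly as in the proof of \cref{FlatExtensionForDecomps}. This gives $\bm c'$ as the moment sequence of an $r$-atomic positive measure. By \cref{prop:waringIFFcoeffsmomentsequence}, this is a positive Waring decomposition of $\psi_{\bm q}(p)(T\bm U)$ of length $r$.

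4. Undo the change of variables. The linear forms are transformed by the real matrix $T^{-\top}$, which keeps distinct projective points distinct and keeps the weights positive. Then pull the decomposition back to $p$ via \cref{QSymMainTheorem}.
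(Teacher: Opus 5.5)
Your proposal is correct and follows essentially the same route as the paper. You reduce to $\psi_{\bm q}(p)$ via \cref{QSymMainTheorem}, use a generic real change of variables (\cref{GenericChangeOfVars,PSDnessPreserved}) to reach flatness and apply the flat extension theorem when $r\le d$, and use a rank-preserving extension of the Hankel matrix when $r=d+1$. Your Gram-matrix argument usefully supplies a step the paper leaves implicit: independence of the first $r$ columns of a PSD $H'$ forces $\rank(H'_0)=r$.

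One wording fix is needed in the case $r=d+1$. There $c_{2d+2}$ must be taken exactly equal to $\bm v^\top H^{-1}\bm v$, with $\bm v=(c_{d+1},\ldots,c_{2d+1})$, so that the Schur complement vanishes. Any strictly larger value makes the extended Hankel matrix positive definite of rank $d+2$, which is not a flat extension.
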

\begin{proof}
    Let $r=\rank(H)$ and assume that $r<hk/2+1$. Let $H_0$ be the principal $hk/2\times hk/2$ submatrix of $H$, which is indexed by all monomial multiples of $Z_0$. If $\rank(H_0)=r$, we can directly apply \cref{FlatExtensionForDecomps} and deduce that $p$ has a positive ${\bm q}$-Symmetric decomposition of length $r$. Else, if $\rank(H_0)<r$, we can perform a generic change of variables $(Z_0,Z_1)\mapsto(U_0,U_1)$ such that the resulting Hankel matrix $H'$ has rank $r$, has the principal $hk/2\times hk/2$ submatrix $H'_0$ of rank $r$ (see \cref{GenericChangeOfVars}) and is still PSD (see \cref{PSDnessPreserved}). Hence, we can apply \cref{FlatExtensionForDecomps} and deduce that $p$ has a positive ${\bm q}$-Symmetric decomposition of length $r$.
    If, instead, $r=hk/2+1$ we first show how to \emph{flat extend} the matrix $H$ to a larger matrix $\tilde{H}$, i.e. define $c_{hk+1}$ and $c_{hk+2}$ such that $\tilde{H}:=\operatorname{Hankel}(c_0,\ldots,c_{hk+2})$ and $\rank(\tilde{H})=\rank(H)$. Then, we apply \cite[Thm. 3.9]{curtofialkow1991} to complete the proof. Define $c_{hk+1}$ arbitrarily and, since $H$ is invertible, $c_{hk+2}$ as $c_{hk+2}=(c_{hk/2+1},\ldots,c_{hk+1})^\top H^{-1}(c_{hk/2+1},\ldots,c_{hk+1})$.
    This choice of $c_{hk+2}$ makes the Schur complement $c_{hk+2}-(c_{hk/2+1},\ldots,c_{hk+1})^\top H^{-1}(c_{hk/2+1},\ldots,c_{hk+1})$ of $H$ in $\tilde{H}$ vanish, so $\rank(\tilde{H})=\rank(H)$, and $\tilde{H}$ is PSD. Applying \cite[Thm. 3.9]{curtofialkow1991} to the extended tuple $(c_0,\ldots,c_{hk+2})$ yields that $(c_0,\ldots,c_{hk+2})$ is the truncated moment sequence of an $r$-atomic measure. As in the proof of \cref{prop:waringIFFcoeffsmomentsequence}, it is shown that $p$ has a positive ${\bm q}$-Symmetric decomposition of length $r$, completing the proof.
    The converse implication follows from \cref{FlatExtensionForDecomps}.
\end{proof}

\begin{theorem}\label{thm:auxiliary}
    Let $\varphi\in\RR[X]$ be a non-zero polynomial, $\bm c=(c_0,\ldots,c_{2N})$ be the truncated moment sequence of the positive Borel measure $\rho=\sum_{i=1}^{r'}\lambda_i\delta_{z_i}$, for ${r'}\in\NN,\,\lambda_1,\ldots,\lambda_{r'}>0$ and $z_1,\ldots,z_{r'}\in\RR$ distinct, $H=\operatorname{Hankel}(\bm c)$ and $r=\rank(H)\le N+1$. 
    If $z_1,\ldots,z_{r'}\in\RR\setminus\Z(\varphi)$, or $r=N+1$, then $\bm c$ has an atomic representing measure supported on $\RR\setminus\Z(\varphi)$ with at most $r$ atoms, which is unique when $r\le N$.
\end{theorem}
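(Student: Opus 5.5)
Split according to whether $r\le N$ or $r=N+1$, using the classical structure of univariate truncated Hankel moment problems.

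\emph{Case $r\le N$.} Since $\bm c$ is the moment sequence of $\rho$ and $H=\sum_{i=1}^{r'}\lambda_i \bm v_{z_i}\bm v_{z_i}^\top$, with $\bm v_z=(1,z,\ldots,z^N)$, we have $r=\min(r',N+1)$ by Vandermonde independence. So $r\le N$ forces $r'=r$, and $\rho$ itself is an $r$-atomic representing measure. Under the hypothesis $z_i\notin\Z(\varphi)$ it is supported on $\RR\setminus\Z(\varphi)$, which gives existence.

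For uniqueness, let $\nu=\sum_{j=1}^{s}\mu_j\delta_{w_j}$ be any representing measure of $\bm c$ with $s\le r\le N$ atoms. The polynomial $g(X)=\prod_i(X-z_i)\prod_j(X-w_j)$ has degree at most $2r\le 2N$, so its square has degree at most $4N$, which exceeds the range of moments we control. Instead, use $f=\prod_{i}(X-z_i)$, of degree $r\le N$. Then $L_{\bm c}(f^2)=0$, so $f\in\ker H$, hence $\int f^2\,d\nu=0$. This forces every $w_j$ to be a root of $f$. The weights are then determined by the invertible Vandermonde system on $c_0,\ldots,c_{r-1}$, so $\nu=\rho$. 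Alternatively, one can invoke the uniqueness statement in \cite[Thm. 3.9]{curtofialkow1991} for the flat/singular case.

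\emph{Case $r=N+1$.} Here $H$ is positive definite, and the plan mirrors the proof of \cref{Hamburger}: flat-extend $H$ to $\tilde H=\operatorname{Hankel}(c_0,\ldots,c_{2N},c_{2N+1},c_{2N+2})$. Set $c_{2N+1}=t$, a free real parameter, and choose $c_{2N+2}$ so the Schur complement vanishes. By \cite[Thm. 3.9]{curtofialkow1991}, each $t$ yields an $(N+1)$-atomic representing measure $\nu_t$. Its nodes are the roots of the generating kernel polynomial
$$P_t(X)=X^{N+1}-\sum_{j=0}^{N}a_j(t)X^j,\qquad (a_0(t),\ldots,a_N(t))^\top=H^{-1}(c_{N+1},\ldots,c_{2N},t)^\top .$$
The coefficients $a_j(t)$ are affine in $t$, so
$$P_t(X)=A(X)-t\,B(X),$$
where $B(X)=\sum_j (H^{-1})_{N,j}X^j$ is a fixed nonzero polynomial; its leading coefficient $(H^{-1})_{N,N}$ is positive. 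Thus the nodes of $\nu_t$ avoid $\Z(\varphi)$ if and only if no real root $x$ of $\varphi$ satisfies $A(x)=tB(x)$.

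For each of the finitely many real roots $x$ of $\varphi$, the set of $t$ with $A(x)=tB(x)$ is a single value when $B(x)\ne0$. When $B(x)=0$ and $A(x)\ne0$ it is empty. When $A(x)=B(x)=0$ it would be all $t$. So it suffices to exclude the last case, after which avoiding finitely many $t$ values finishes the argument.

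\textbf{Main obstacle.} The crux is showing that $A$ and $B$ have no common root. Equivalently, the family $\{P_t\}$ has no fixed node, since the quadrature nodes of $\nu_t$ are known to interlace and move as $t$ varies. I would argue this directly: if $x$ were a common node of all $\nu_t$, then its weight is given by the Christoffel function, which is positive and independent of $t$. Comparing two different values of $t$ then contradicts the fact that $P_t-P_{t'}=(t'-t)B$ has degree exactly $N$ while sharing $x$.

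A cleaner route is a degree argument on $\gcd(A,B)$. The polynomial $B$ is the $N$-th orthogonal polynomial of the functional $L_{\bm c}$, since it is orthogonal to $1,\ldots,X^{N-1}$ via $H^{-1}$. The polynomial $A$ is, up to lower order, $X B$ minus the $(N+1)$-st term, so a common root would propagate through the three-term recurrence down to the constant polynomial $1$, which is absurd. With that established, the existence claim follows, and no uniqueness is asserted when $r=N+1$.
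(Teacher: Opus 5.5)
Your skeleton matches the paper's. For $r\le N$ the measure $\rho$ itself works, since the alternative hypothesis $r=N+1$ is unavailable. For $r=N+1$ you add a free moment $t=c_{2N+1}$ and write the generating polynomial as $A-tB$; this is the paper's $U-tW$, with $B$ built from the last column of $H^{-1}$. You then discard the finitely many $t$ with $A(x)=tB(x)$, $x\in\Z(\varphi)$, once $A$ and $B$ are known to share no root.

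Your singular case is more self-contained than the paper's:
\begin{itemize}
\item Getting $r=\min(r',N+1)$ from $H=\sum_i\lambda_i\bm v_{z_i}\bm v_{z_i}^\top$ replaces the kernel-vector argument and \cite[Lemma 3.2]{curtofialkow1991}.
\item The $\int f^2\,\mathrm{d}\nu=0$ argument proves uniqueness among all representing measures, so the restriction to $s\le r$ atoms is unnecessary. It replaces the citation of \cite[Thm. 3.10]{curtofialkow1991}.
\end{itemize}
Adding two moments and using \cite[Thm. 3.9]{curtofialkow1991}, rather than one moment and \cite[Cor. 3.4]{curtofialkow1991}, is a harmless variation.

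The real gap is the step you yourself call the crux: that $A$ and $B$ have no common root. Neither sketch establishes it as written.

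\emph{First sketch.} It reaches no contradiction. $B(x)=0$ is compatible with $\deg B=N$, and equal Christoffel weights at $x$ contradict nothing on their own. The missing step is this: if $P_t(x)=P_{t'}(x)=0$ with $t\neq t'$, then $\nu_t-\nu_{t'}$ is a signed measure on at most $2N+1$ points whose moments up to degree $2N$ vanish. By Vandermonde it is zero, so $\nu_t=\nu_{t'}$, hence $P_t=P_{t'}$ and $(t'-t)B=0$, which is absurd.

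\emph{Second sketch.} This is the classical no-common-zero argument, but the phrase ``$XB$ minus the $(N+1)$-st term'' does not supply the one fact it rests on. Let $\hat B=B/(H^{-1})_{N,N}$ and let $p_{N-1}$ be the monic orthogonal polynomial of degree $N-1$. You need
\[
A=(X-\alpha)\hat B-\beta p_{N-1}\quad\text{with }\beta\neq0 .
\]
This holds because $A-X\hat B$ has degree at most $N$ and is orthogonal to $\RR[X]_{\le N-2}$. Pairing with $p_{N-1}$ then gives $\beta=L(\hat B^2)/L(p_{N-1}^2)>0$.

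\emph{The paper's route.} The paper avoids orthogonal polynomials altogether. Suppose $A=(X-z)\bar A$ and $B=(X-z)\bar B$. It uses three facts: $L(X^iA)=0$ for $i<N$; $L(X^iB)=\delta_{iN}$ for $i\le N$; and $\bar A$ is monic of degree $N$. Together these give $0=L(A\bar B)=L(\bar AB)=L(X^NB)=1$, a contradiction.
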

\begin{proof}
    We distinguish the two cases depending on whether $H$ is invertible or not. Let $L$ be the moment functional of $\bm c$ defined on $\RR[X]_{\leq 2N}$, i.e. $L(X^{i})=c_{i}$ for $i=0, \ldots, 2N$.
    
    Assume first that $H$ is not invertible, so $\rank(H)\le N$ and the kernel of $H$ is nontrivial. There exists a non-zero vector $\bm g=(g_0,\ldots,g_r, 0, \ldots, 0)\in\RR^{N+1}$. This implies that $\bm g^\top H\bm g=L(g(X)^2)=0$, where $g(X):=\sum_{i=0}^{r}g_i X^i$ is of degree $r$. Note that $0=L(g(X)^2)=\int g(X)^2\,\mathrm{d}\rho= \sum_{j=1}^{r'}\lambda_jg(z_j)^2$, so, since $\lambda_1,\ldots,\lambda_{r'}>0$, we deduce that $g(z_j)^2=0$ for all $j=1,\ldots,r'$. This implies that $\{z_1,\ldots,z_{r'}\}\subseteq\Z(g)$. In particular, $r'\leq|\Z(g)|\leq r$. Additionally, by \cite[Lemma 3.2.]{curtofialkow1991}, $r'\geq r$, so we deduce that $r'=r$ and uniqueness then follows from \cite[Thm. 3.10.]{curtofialkow1991}.
    
    Consider now the case $\rank(H)=N+1$. Our strategy is to extend the tuple $\bm c$ to $\tilde{\bm c}:=(c_0,\ldots,c_{2N},c_{2N+1})$ and rely on the theory of flat extensions of moment sequences (see \cite{curtofialkow1991}). To simplify the notation, denote $c_{2N+1}$ by $t$.
    We now show that a generic choice of $t$ yields an $N+1$-atomic representing measure $\nu$ for $\tilde{\bm c}$, and thus also for $\bm c$, supported on $\RR\setminus\Z(\varphi)$. Let $\bm v_i=(c_i,\ldots,c_{i+N})$ be the $i\textsuperscript{th}$ column of $H$. We construct a new column $\bm v_{N+1}$ which we define as a linear combination of the previous ones $\bm v_0,\ldots,\bm v_{N}$. Let $a_0,\ldots,a_{N}\in\RR$ be the coefficients of the linear combination, so that
    \begin{equation*}
    \begin{pNiceMatrix}[first-row]
    \bm v_0 & \bm v_1 & \cdots & \bm v_{N} \\
    c_0 & c_1 & \cdots & c_{N}\\
    c_1 & c_2 & \cdots & c_{N+1}\\
    \vdots & \vdots &  & \vdots\\
    c_{N} & c_{N+1} & \cdots & c_{2N}\\
    \end{pNiceMatrix}
    \begin{pNiceMatrix}[first-row]
    \bm a\\
    a_0 \\
    a_1 \\
    \vdots \\
    a_{N-1}\\
    a_{N}\\
    \end{pNiceMatrix}
    =
    \begin{pNiceMatrix}[first-row]
    \bm v_{N+1} \\
    c_{N+1} \\
    c_{N+2} \\
    \vdots \\
    c_{2N} \\
    t \\
    \end{pNiceMatrix}
    \end{equation*}

and, since $H$  is invertible, $\bm a=H^{-1}(c_{N+1},c_{N+2},\ldots,c_{2N},0)+tH^{-1}(0,0,\ldots,0,1)=\bm u + t \bm w$, where $\bm u:=H^{-1}(c_{N+1},\ldots,c_{2N},0)$ and $\bm w:=H^{-1}(0,\ldots,0,1)$. Since $H$ is the truncated moment matrix of order $N$ of a positive Borel measure, $H$ is PSD.
We consider the polynomial 
    \begin{align*}
        g_t(X)&=X^{N+1}-\left(\sum_{i=0}^N a_iX^i\right)
        =\left(X^{N+1}-\sum_{i=0}^N u_iX^i\right)-t\left(\sum_{i=0}^N w_iX^i\right)  =U(X)-tW(X),
    \end{align*}
also called \emph{generating polynomial} of $\tilde{\bm c}$. 
We show that, for a generic choice of $t$, $g_t$ and $\varphi$ have no commont real roots.
Let $z\in\Z(\varphi)$. Since $g_t(z)=0$ if and only if $U(z)-tW(z)=0$, we study separately the two following cases.

Case $W(z)=0$: In this case, for $z$ to be a root of $g_t$, $z$ must also be a root of $U(X)$. Let $U(X)=(X-z)\overline{U}(X)$ and $W(X)=(X-z)\overline{W}(X)$, where $\deg(\overline{U})\leq N$ and $\deg(\overline{W})\leq N-1$. Recall that $L$ is the moment functional of $\bm c$ defined on $\RR[X]_{\leq 2N}$, i.e. $L(X^{i+j})=c_{i+j}$ and evaluate $L$ at $(X-z)\overline{U}(X)\overline{W}(X)$. On the one hand, $L((X-z)\overline{U}(X)\overline{W}(X))=L(U(X)\overline{W}(X))$, write $\overline{W}(X)=\sum_{i=0}^{N-1}d_iX^i$ and note that
    \begin{align*}
        L(U(X)\overline{W}(X))&=\sum_{i=0}^{N-1}d_iL(U(X)X^i)
        =\sum_{i=0}^{N-1}d_iL\Bigg(X^{N+1+i}-\sum_{j=0}^N u_jX^{i+j}\Bigg)\\
        &=\sum_{i=0}^{N-1}d_i\Bigg(c_{N+1+i}-\sum_{j=0}^N u_jc_{i+j}\Bigg)
        =\sum_{i=0}^{N-1}d_i\left(c_{N+1+i}-c_{N+1+i}\right) =0,
    \end{align*}
    where the second to last equation follows from observing that $\bm u:=H^{-1}(c_{N+1},\ldots,c_{2N},0)$, so $H\bm u=(c_{N+1},\ldots,c_{2N},0)$, i.e. $\sum_{j=0}^{N}c_{i+j}u_j=c_{N+1+i}$ for all $i=0,\ldots,N-1$.
    
    On the other hand, since $U$ is a monic polynomial, so is $\overline{U}$, and hence it can be written as $\overline{U}(X)=X^N+S(X)$ for some polynomial $S$ of degree at most $N-1$. Additionally, since $L((X-z)\overline{U}(X)\overline{W}(X))=L(\overline{U}(X)W(X))$, we can compute $L(\overline{U}(X)W(X))=L(X^NW(X)+W(X)S(X))= L(X^NW(X))+L(W(X)S(X))=1+0=1$. Note that, in the last step, we used that $\bm w:=H^{-1}(0,\ldots,0,1)$, i.e. $H\bm w=(0,\ldots,0,1)$, yielding $L(X^NW(X))=(0,\ldots,0,1)^\top(0,\ldots,0,1)=1$, while $L(W(X)S(X))=0$ follows from $\deg(S)\leq N-1$, so the tuple of coefficients of $S$ in the $X$ variable is of the form $\bm s=(s_0,\ldots,s_{N-1},0)$, hence $L(W(X)S(X))=\bm s^\top H\bm w=\bm s^\top(0,\ldots,0,1)=0$. Computing $L((X-z)\overline{U}(X)\allowbreak\overline{W}(X))$ led to the contradiction $0=L(U(X)\overline{W}(X))=L(\overline{U}(X)W(X))=1$, hence we deduce that $U$ and $W$ cannot have a common root, so $W(z)=0$ implies that $g_t(z)\neq0$ and, in this case, $z\notin\supp(\nu)$.

    Case $W(z)\neq0$: In this case, $g_t(z)=0$ if and only if $t=U(z)/W(z)$.
    
    We conclude that, for $g_t$ and $\varphi$ to have distinct roots, it suffices that $t\notin\{\frac{U(z)}{W(z)}\mid z\in\Z(\varphi)\setminus\Z(W)\}$. In particular, a generic choice of $t$ meets the condition. By \cite[Cor. 3.4]{curtofialkow1991}, $\tilde{\bm c}$ has an $N+1$-atomic representing measure supported on the roots of the polynomial $g_t$, which are in $\RR \setminus \Z(\varphi)$.

    In summary, if $\rank(H)\leq N$, then $r'\leq r$ and $\rho$ is the desired measure. If, instead, $\rank(H)=N+1$, we are able to construct a representing measure for $\bm c$ supported on $\RR\setminus\Z(\varphi)$ with $N+1$ atoms by means of extending $\bm c$ to $\tilde{\bm c}$, and this completes the proof.
\end{proof}

The following result improves the bound \cite[Thm.1.1]{Riener2025QuadratureRW}, under an additional assumption on the parametrization of the algebraic curve $\mathcal{C}$.

\begin{theorem}\label{ComparisonWithRienerTuratti}
    Let $\bm q=(q_0,q_1,\ldots,q_n)$, with $q_i\in\S^h({\RR^2})$  with $W_{\bm q}$ surjective, $\bm \varphi:\RR\setminus \Z(q_0(1,Z))\to\RR^n;\;z\mapsto(q_1(1,z)/q_0(1,z),\ldots,q_n(1,z)/q_0(1,z))$ and assume that $q_0(1,Z)^k$ is non-negative on $\RR$. Let $\mathcal{C}:=\mathrm{Im}(\bm \varphi)$ and $\mu$ be a positive Borel measure supported on $\mathcal{C}$. 
    Let $p_{\bm\alpha}=\int X_1^{\alpha_1}\cdots X_n^{\alpha_n}\d\mu$ for $\bm\alpha=(\alpha_1, \ldots, \alpha_n)\in \NN^n,\, |\bm\alpha|\leq k$ and let
    \begin{equation*}
        p:=\sum_{\bm\alpha\in \NN_0^n,\, |\bm\alpha|\leq k}\binom{k}{(k-|\bm\alpha|,\alpha_1,\ldots,\alpha_n)}p_{\bm\alpha}X_0^{k-|\bm\alpha|}X_1^{\alpha_1}\cdots X_n^{\alpha_n}.
    \end{equation*}
    Then, $p$ is a $\bm q$-Symmetric tensor.
    Moreover, let $\psi_{\bm q}(p)=\sum_{j=0}^{hk}\binom{hk}{j}\allowbreak c_jZ_0^{hk-j}Z_1^j$, $H=\operatorname{Hankel}((c_0,\ldots,\allowbreak c_{hk}))$, $r=\rank(H)\leq \lceil\frac{hk+1}{2}\rceil$.
    Then, there exists a quadrature formula of strength $k$ for $\mu$ with 
    $r$ nodes on $\mathcal{C}$ if $r\neq \frac{hk+1}{2}$, and with $r+1$ nodes on $\mathcal{C}$ otherwise.
\end{theorem}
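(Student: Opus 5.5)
The plan is to pull $\mu$ back to a measure on the parameter line $\RR$, read off $\bm c$ as a truncated moment sequence there, solve that univariate problem with \cref{thm:auxiliary}, and push the nodes forward to $\mathcal C$.

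\textbf{Step 1: pull back and show $p$ is $\bm q$-Symmetric.} By \cref{NoBasePoints}, the map $\bm q$ is injective. Hence $\bm\varphi$ is injective on $\RR\setminus\Z(q_0(1,Z))$, so $\mu$ is the push-forward $\bm\varphi_*\tilde\mu$ of a positive Borel measure $\tilde\mu$ on $\RR\setminus\Z(q_0(1,Z))$. Writing $\langle(1,\bm\varphi(z)),\bm X\rangle^k=q_0(1,z)^{-k}\langle\bm q(1,z),\bm X\rangle^k$ and expanding as in \cref{prop:waringIFFcoeffsmomentsequence}, we get
\begin{equation*}
p=\int\langle(1,\bm\varphi(z)),\bm X\rangle^k\,\d{\tilde\mu}(z)=\int\langle\bm q(1,z),\bm X\rangle^k\,\d{\rho}(z),\qquad \d{\rho}:=q_0(1,z)^{-k}\,\d{\tilde\mu}.
\end{equation*}
The hypothesis that $q_0(1,Z)^k\ge0$ on $\RR$ is exactly what makes $\rho$ a positive measure. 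The integrand lies in $\bm q$-$\Sym_{k,n+1}$ by \cref{QSymOnlyDec}. Since this is a finite-dimensional (hence closed) subspace, $p\in\bm q$-$\Sym_{k,n+1}$. Applying $\psi_{\bm q}$ termwise, i.e.\ using the correspondence of \cref{QSymMainTheorem} under the integral, gives $\psi_{\bm q}(p)=\int(Z_0+zZ_1)^{hk}\,\d{\rho}(z)$. Therefore $c_j=\int z^j\,\d{\rho}$ for $j=0,\ldots,hk$, so $\bm c$ is a truncated moment sequence of $\rho$, which is supported on $\RR\setminus\Z(q_0(1,Z))$.

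\textbf{Step 2: reduce to a finitely atomic measure.} By Richter's theorem \cite[Satz 4]{Richter1957}, applied on the measurable set $\RR\setminus\Z(q_0(1,Z))$, there is a finitely atomic $\rho'=\sum_i\lambda_i\delta_{z_i}$ with $\lambda_i>0$ and all $z_i\notin\Z(q_0(1,Z))$. This $\rho'$ has the same moments as $\rho$ up to degree $hk$. If $hk$ is odd, I would use the same theorem for degree $hk+1$ instead, so that $\rho'$ also defines a moment $c_{hk+1}$.

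\textbf{Step 3: apply \cref{thm:auxiliary} with the polynomial $q_0(1,Z)$.} If $hk=2N$, then $H$ is the $(N+1)\times(N+1)$ Hankel matrix and $r\le N+1$. The atoms of $\rho'$ avoid $\Z(q_0(1,Z))$, so the theorem gives a representing measure for $\bm c$ with at most $r$ atoms off $\Z(q_0(1,Z))$. If $hk$ is odd and $r\le(hk-1)/2$, I would apply the theorem to the even truncation $(c_0,\ldots,c_{hk-1})$. Its rank is then at most $N:=(hk-1)/2$, so the uniqueness clause applies. I then need to check that the unique $r$-atomic measure also reproduces $c_{hk}$; it should, since $\rho'$ itself realizes that rank, as in the first half of the proof of \cref{thm:auxiliary}. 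If $hk$ is odd and $r=(hk+1)/2$, I would use the extended sequence $(c_0,\ldots,c_{hk+1})$ with $N=(hk+1)/2$. Its Hankel rank is at most $N+1=r+1$, and the theorem yields at most $r+1$ atoms.

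\textbf{Step 4: push forward.} Given atoms $\zeta_i\notin\Z(q_0(1,Z))$ with weights $\mu_i>0$, set
\begin{equation*}
\nu:=\sum_i\mu_i\,q_0(1,\zeta_i)^k\,\delta_{\bm\varphi(\zeta_i)}.
\end{equation*}
The weights $\mu_i q_0(1,\zeta_i)^k$ are strictly positive, and the nodes $\bm\varphi(\zeta_i)$ lie on $\mathcal C$ and are distinct by injectivity. Reversing the computation of Step 1 via \cref{QSymMainTheorem}, $\nu$ and $\mu$ give the same $p$, so their moments agree up to degree $k$.

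\textbf{Main obstacle.} I expect the odd-degree bookkeeping in Step 3 to be the hard part. One must match the rank of the non-square Hankel matrix $H$ with the rank of the square Hankel matrix of the truncation or extension used. In particular, for $r<(hk+1)/2$ one must verify that the degree-$(hk-1)$ truncation has rank $\le N$. If that fails, I would fall back on the flat-extension argument of \cref{Hamburger}, combined with the generic choice of the extra moment in \cref{thm:auxiliary}, which is what keeps the nodes away from $\Z(q_0(1,Z))$.
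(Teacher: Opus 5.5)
Your proposal follows the paper's route. You realize $\bm c$ as the moments of a finitely atomic measure on $\RR\setminus\Z(q_0(1,Z))$ with weights rescaled by $q_0(1,z)^{-k}$. You then apply \cref{thm:auxiliary} directly ($hk$ even), to the even truncation ($hk$ odd, $r\le\frac{hk-1}{2}$), or to a one-step extension ($hk$ odd, $r=\frac{hk+1}{2}$), and push forward with weights $q_0(1,\zeta_i)^k$. The only structural difference is the order of operations. The paper first discretizes $\mu$ on $\mathcal C$ via \cite[Thm. 2]{MR2231629} and pulls back finitely many atoms. You pull back $\mu$ itself and discretize on the line. Your order needs $\bm\varphi^{-1}$ to be Borel on $\mathcal C$, which holds because $\bm\varphi$ is a continuous injection on a $\sigma$-compact set, but you leave it unstated. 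The paper's order avoids this point.

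One step fails as written. For odd $hk$ you invoke Richter's theorem in degree $hk+1$, which requires $z^{hk+1}\in L^1(\rho)$, and nothing in the hypotheses gives this. Take $h=k=1$, $\bm q=(Z_0,Z_1)$, and $\mu$ on $\mathcal C=\RR$ with density $(1+|z|^3)^{-1}$. Then $\rho=\mu$, $c_0$ and $c_1$ are finite, and $r=1=\frac{hk+1}{2}$, but $\int z^2\,\mathrm{d}\mu=\infty$. So in exactly the case where you need $c_{hk+1}$, it is not available from $\rho$. The fix is the paper's: apply Richter only in degree $hk$ and set $c_{hk+1}:=\int z^{hk+1}\,\mathrm{d}\rho'$ using the finitely atomic $\rho'$. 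Only $c_0,\ldots,c_{hk}$ enter the quadrature formula, so nothing else changes.

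The obstacle you anticipate is not one. $\operatorname{Hankel}(c_0,\ldots,c_{hk-1})$ is $H$ with its last column deleted, so its rank is at most $r\le N$. The first part of the proof of \cref{thm:auxiliary} then shows that $\rho'$ itself has exactly that many atoms. Since $\rank(H)$ is bounded by the number of atoms of $\rho'$, $\rho'$ has exactly $r$ atoms, avoids $\Z(q_0(1,Z))$, and represents $c_{hk}$ by construction. This is what the paper means by ``arguing as in the first part of the proof of \cref{thm:auxiliary}'', and no fallback to \cref{Hamburger} is needed.
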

\begin{proof}
    Let $\varphi_i(Z):=q_i(1,Z)$ for $i=0,\ldots,n$ and let $\mu$ be a positive Borel measure supported on $\mathcal{C}$. Let $\bm p=\{p_{\bm\alpha}\mid\bm\alpha\in \NN_0^n,\, |\bm\alpha|\leq k\}$ denote the truncated moment sequence of $\mu$, i.e. $p_{\bm\alpha}=\int X_1^{\alpha_1}\cdots X_n^{\alpha_n}\d\mu$ for all $\bm\alpha\in \NN_0^n,\, |\bm\alpha|\leq k$. 
    Applying \cite[Thm. 2]{MR2231629} to the measure $\mu$, supported on $\mathcal{C}$, yields that $\bm p$ is also the truncated moment sequence of the atomic measure
    \begin{equation}\label{eq:atomicmeasure}
        \sum_{i=1}^{r'}\lambda_i\delta_{\bm\varphi(z_i)}
    \end{equation}
    where $\bm\varphi(z_i)\in\mathcal{C}$, for some distinct $z_1,\ldots,z_{r'}\in\RR\setminus\Z(\varphi_0)$ and $\lambda_1,\ldots,\lambda_{r'}>0$.
    Equivalently, defining
    \begin{equation*}
        p:=\sum_{\bm\alpha\in \NN_0^n,\, |\bm\alpha|\leq k}\binom{k}{(k-|\bm\alpha|,\alpha_1,\ldots,\alpha_n)}p_{\bm\alpha}X_0^{k-|\bm\alpha|}X_1^{\alpha_1}\cdots X_n^{\alpha_n},
    \end{equation*}
    by \cref{prop:waringIFFcoeffsmomentsequence}, we obtain that $p$ has the positive Waring decomposition
    \begin{equation}\label{PosWaringDec}
    p=\sum_{i=1}^{r'}\lambda_i\left(X_0+\frac{\varphi_1(z_i)}{\varphi_0(z_i)}X_1+\cdots +\frac{\varphi_n(z_i)}{\varphi_0(z_i)}X_{n}\right)^{k},
    \end{equation}
    where the linear forms correspond to the projective points $[1:\frac{\varphi_1(z_i)}{\varphi_0(z_i)}:\cdots:\frac{\varphi_n(z_i)}{\varphi_0(z_i)}]\in \mathrm{Im}(\bm q)$ for all $i=1,\ldots,r'$. We rewrite \cref{PosWaringDec} to deduce that
    \begin{equation}\label{AuxiliaryQSymDec}
        p=\sum_{i=1}^{r'}\frac{\lambda_i}{(q_0(1,z_i))^{k}}\left(q_0(1,z_i) X_0+\cdots +q_n(1,z_i)X_n\right)^{k}
    \end{equation}
    is a positive $\bm q$-Symmetric decomposition of $p$. Note that positivity of the decomposition is ensured by the non-negativity assumption on $q_0(1,Z)^k$. By \cref{QSymOnlyDec}, $p\in \bm q\text{-Sym}_{k,n+1}$ and so, applying \cref{QSymMainTheorem} to $p$, the binary form $\psi_{\bm q}(p)=\sum_{j=0}^{hk}\binom{hk}{j}\allowbreak c_jZ_0^{hk-j}Z_1^j$ has a positive Waring decomposition $\psi_{\bm q}(p)=\sum_{i=1}^{r'}\frac{\lambda_i}{(q_0(1,z_i))^{k}}\left(Z_0+z_iZ_1\right)^{hk}$. Equivalently, $c_j=\sum_{i=1}^{r'}\frac{\lambda_i}{(q_0(1,z_i))^{k}}z_i^j$ for all $j=0,\ldots,hk$, i.e. $\bm c=(c_j)_{j=0}^{hk}$ is the truncated moment sequence of a measure supported on $\RR\setminus \Z(\varphi_0)$.
   
    If $hk$ is even, in particular we have $r\neq\frac{hk+1}{2}$, and \cref{thm:auxiliary} applied to the sequence $\bm c$ yields an atomic representing measure supported on $\RR\setminus \Z(\varphi_0)$, with $r\leq hk/2+1$ atoms for $\bm c$.
    
    If $hk$ is odd and $r\neq\frac{hk+1}{2}$, arguing as in the first part of the proof of \cref{thm:auxiliary}, it is shown that $r'=r$ and $\bm c$ has the unique $r$-atomic representing measure \cref{eq:atomicmeasure}. If, instead, $r=\frac{hk+1}{2}$, let $\tilde{\bm c}=(c_{0}, \ldots, c_{hk}, c_{hk+1})$ where $c_{hk+1}=\sum_{i=1}^{r'}\frac{\lambda_i}{(q_0(1,z_i))^k}z_i^{hk+1}$. By \cref{thm:auxiliary} applied to $\tilde{\bm c}$, we obtain a representing measure for $\tilde{\bm c}$, so also for $\bm c$, supported on $\RR\setminus\Z(\varphi_0)$ with at most $r+1$ many atoms.
    Finally, we proceed in the reverse order as before: we deduce that $\psi_{\bm q}(p)$ has a positive Waring decomposition of length $r$, or at most $r+1$, respectively. Thus, $p$ has a positive $\bm q$-Symmetric decomposition of the same length, corresponding to a quadrature formula of strength $k$ for $\mu$ consisting of $r$ nodes, when $r\neq \frac{hk+1}{2}$, and at most $r+1$ nodes otherwise.
\end{proof}
\begin{remark}
    Under the additional non-negativity assumption on $q_0(1,Z_1)^k$ and surjectivity of $W_{\bm q}$, the bound we obtained in \cref{ComparisonWithRienerTuratti}
    is lower than \cite[Thm. 1.1 (2)]{Riener2025QuadratureRW} by the corank of the Hankel matrix $H$ and by the number of distinct real roots of $q_0(1,Z_1)$ in all cases, except when $hk$ is odd and the rank of $H$ is maximum, where our bound is lower by the number of distinct real roots of $q_0(1,Z_1)$ minus one.
\end{remark}
\begin{theorem}
    The bound obtained in \cref{ComparisonWithRienerTuratti} is sharp when $r\neq \frac{hk+1}{2}$. If $r= \frac{hk+1}{2}$, the bound is sharp whenever $q_0(1,Z_1)$ has a real root.
\end{theorem}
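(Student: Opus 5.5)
Sharpness here means: for each admissible value of $r$ I will exhibit data ($\bm q$ with $W_{\bm q}$ surjective, a measure $\mu$ on $\mathcal C$) for which no quadrature formula of strength $k$ with fewer nodes than claimed in \cref{ComparisonWithRienerTuratti} exists. The lower bound will come from the converse part of \cref{FlatExtensionForDecomps}. Any quadrature formula of strength $k$ with $s$ nodes on $\mathcal C$ gives, by \cref{prop:waringIFFcoeffsmomentsequence} and the rewriting \cref{AuxiliaryQSymDec}, a positive $\bm q$-Symmetric decomposition of $p$ of length $s$. This requires $q_0(1,z_i)\neq0$, which holds because the nodes lie in $\mathrm{Im}(\bm\varphi)$. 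Then \cref{QSymMainTheorem} turns this into a positive Waring decomposition of $\psi_{\bm q}(p)$ of length $s$, so $H$ is PSD with $\rank(H)\le s$. Hence at least $r$ nodes are always needed, and the case $r\neq\frac{hk+1}{2}$ is sharp as soon as every value of $r$ in the allowed range is attained. For that, I take $\mu=\sum_{i=1}^r\delta_{\bm\varphi(z_i)}$ with distinct $z_i\notin\Z(q_0(1,Z))$. Then $\bm c$ is the moment sequence of $\sum_i q_0(1,z_i)^{-k}\delta_{z_i}$, and its Hankel matrix has rank $\min(r,\lfloor hk/2\rfloor+1)$ by the Vandermonde structure. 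Any $\bm q$ with $W_{\bm q}$ surjective and $q_0(1,Z)^k\ge0$ will do, for instance the monomial basis with $k$ even.

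For $r=\frac{hk+1}{2}$, so $hk$ odd, I must show that $r$ nodes can fail, that is, that some moment data force $r+1$ nodes. Let $z_0$ be a real root of $q_0(1,Z)$, which is the point of $\mathcal C$ ``at infinity''. I choose $\mu$ so that the associated one-dimensional sequence $\bm c=(c_0,\ldots,c_{hk})$, with $hk=2r-1$, has $H$ of full rank $r$. I also want its unique $r$-atomic representing measure on $\RR$ to have an atom exactly at $z_0$.

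The key fact I will use is the classical one for odd-degree truncated Hamburger sequences. When $\rank H=r$ with $H$ of size $r\times(r+1)$, the $r$-atomic representing measures are parametrized by the kernel of $H$. Concretely, the relevant $r$-atomic measures are supported on the roots of the generating polynomial determined by the kernel vector. If I arrange for this polynomial to be $\prod_i (X-w_i)$ with $w_1=z_0$, then the only $r$-atomic measure representing $\bm c$ on $\RR$ has an atom at $z_0$. Any other representing measure must have at least $r+1$ atoms.

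The construction is then as follows. I fix a measure $\sigma=\sum_{i=1}^r\beta_i\delta_{w_i}$ with $w_1=z_0$, take its moments $\bm c$, and pull them back. Because $z_0\in\Z(\varphi_0)$, I cannot realize $\sigma$ directly by a measure on $\mathcal C$. I first need to check that $\bm c$ is still the $\bm c$-sequence of some measure supported on $\mathcal C$. I plan to achieve this by perturbing: I replace the atom at $z_0$ by $r+1-(r-1)=2$ atoms near $z_0$ and off $\Z(\varphi_0)$, with matching moments up to degree $hk$. This is possible because the remaining freedom is the choice of $c_{hk+1}$, as in \cref{thm:auxiliary}. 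I then let $\mu$ be the pushforward through $\bm\varphi$, with weights rescaled by $q_0(1,z)^k$.

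The main obstacle is exactly this step: verifying that $\bm c$ arises from a measure on $\mathcal C$ while its unique $r$-atomic representation uses $z_0$. Once this holds, any quadrature formula with $r$ nodes on $\mathcal C$ would give an $r$-atomic representing measure of $\bm c$ avoiding $z_0$, contradicting uniqueness. So $r+1$ nodes are necessary.
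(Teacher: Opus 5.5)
Your overall architecture matches the paper's. For $r\neq\frac{hk+1}{2}$ you push forward $r$ atoms chosen off $\Z(\varphi_0)$ and use that a Hankel matrix of rank $r$ forces at least $r$ nodes. For $r=\frac{hk+1}{2}$ you plant an atom at a real root $z_0$ of $q_0(1,Z)$, use uniqueness of the $r$-atomic representing measure of a full-rank odd-degree sequence, and pull back a hypothetical $r$-node formula to get a contradiction. One minor point: the converse of \cref{FlatExtensionForDecomps} is stated for $k$ even and a square $H$. When $hk$ is odd, read off $\rank(H)\le s$ directly from $H=\sum_i\lambda_i\bm u_i\bm v_i^\top$ with Vandermonde vectors $\bm u_i,\bm v_i$.

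The genuine gap is the step you flag yourself: showing that $\bm c$ has a positive representing measure on $\RR\setminus\Z(\varphi_0)$, so that the example $\mu$ on $\mathcal{C}$ exists at all. The mechanism you describe cannot work.

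\begin{itemize}
\item Your count $r+1-(r-1)=2$ keeps $w_2,\ldots,w_r$ and replaces only $\beta_1\delta_{z_0}$ by two nearby atoms. For $r\ge2$ this is impossible. If an $(r+1)$-atomic representing measure $\tau$ of $\bm c$ shares even one atom with $\sigma$, then $\tau-\sigma$ is a signed measure on at most $2r=hk+1$ points whose moments up to degree $hk$ vanish. Invertibility of the Vandermonde matrix then forces $\tau=\sigma$, which is absurd.
\item Hence every $(r+1)$-atomic representing measure of $\bm c$ is disjoint from $\{z_0,w_2,\ldots,w_r\}$. All atoms must move, not just the one at $z_0$. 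Read locally, two atoms also can never match the moments of $\beta_1\delta_{z_0}$ up to degree $hk\ge2$.
\item Nor can \cref{thm:auxiliary} be applied to $\bm c$ as it stands. It requires data of even degree $2N$ with an invertible square Hankel matrix, and its free parameter is $c_{2N+1}$, whereas here $hk$ is odd.
\end{itemize}

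The paper closes this step with a two-stage extension.
\begin{enumerate}
\item $H'=\operatorname{Hankel}(c_0,\ldots,c_{hk-1})$ is positive definite because $\sigma$ has $r$ distinct atoms.
\item Choose $c_{hk+1}>(c_r,\ldots,c_{hk})^\top(H')^{-1}(c_r,\ldots,c_{hk})$. By Schur complements, $\operatorname{Hankel}(c_0,\ldots,c_{hk+1})$ is then positive definite.
\item This extended sequence is therefore the moment sequence of a finitely atomic measure, by \cite{curtofialkow1991} together with \cite[Thm.~2]{MR2231629}.
\item Now apply \cref{thm:auxiliary} with $N=r$, in the case $\rank=N+1$. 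It chooses $c_{hk+2}$ generically and yields an $(r+1)$-atomic representing measure $\sum_i\lambda_i\delta_{z_i}$ with all $z_i\notin\Z(\varphi_0)$.
\item Then $\mu=\sum_i\lambda_iq_0(1,z_i)^k\delta_{\bm\varphi(z_i)}$ is the required measure on $\mathcal{C}$.
\end{enumerate}
With $\mu$ in hand, your uniqueness argument finishes the proof unchanged.
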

\begin{proof}
    We prove sharpness of the bound in \cref{ComparisonWithRienerTuratti} by demonstrating that, for any valid choice of $\bm q$ such that $W_{\bm q}$ is surjective, there exist positive Borel measures supported on $\C$ whose quadrature formulae strictly require the maximum number of nodes permitted by the bound. We analyze the two rank cases of $H$ separately, using the same notation as in \cref{ComparisonWithRienerTuratti}.

    Case $r\neq \frac{hk+1}{2}$: The bound in \cref{ComparisonWithRienerTuratti} states that a quadrature formula exists with at most $r$ nodes. We choose $r$ distinct arbitrary points $x_1, \dots, x_r \in \RR \setminus \Z(\varphi_0)$. We define a positive Borel measure $\nu$ on the real line by $\nu = \sum_{i=1}^{r} \omega_i \delta_{x_i}$, where $\omega_i > 0$ for all $i = 1, \dots, r$. Let $\bm c = (c_0, c_1, \dots, c_{hk})$ be the truncated moment sequence of $\nu$ up to degree ${hk}$, where each moment is defined by $c_j = \int_{\RR} X^j \, \d\nu = \sum_{i=1}^r \omega_i x_i^j$. We construct the corresponding measure $\mu= \sum_{i=1}^{r} \omega_iq_0(1, x_i)^k \delta_{\bm\varphi(x_i)}$ supported on the curve $\C$ via the rational parametrization $\bm\varphi$.
    Consider the Hankel matrix $H = \operatorname{Hankel}(\bm c)$ of size $r \times r$. Since $x_1,\ldots,x_r$ are distinct, the truncated moment matrix $H = \operatorname{Hankel}(\bm c)$ has rank $r$, every representing measure for $\bm c$ has at least $r$ atoms. Since any quadrature formula for $\mu$ of strength $k$ supported on $\C$ is the pushforward of a representing measure for $\bm c$, we deduce that no such quadrature formula with fewer than $r$ nodes can exist, proving sharpness of the bound.
    
    Case $r=\frac{hk+1}{2}$: Note that, in this case, $h$ and $k$ must be both odd. The bound in \cref{ComparisonWithRienerTuratti} yields at most $r +1$ nodes. We assume that $\Z(\varphi_0) \neq \emptyset$, so there exists a point $x^* \in \Z(\varphi_0)$ at which the parametrization $\bm\varphi$ is undefined. Let $x_1, \dots, x_{r-1} \in \RR \setminus \Z(\varphi_0)$ be distinct arbitrary points such that $x_i \neq x^*$ for all $i = 1, \dots, r-1$. We then define a positive $r$-atomic measure $\nu$ on the real line as $\nu = \omega^* \delta_{x^*} + \sum_{i=1}^{r-1} \omega_i \delta_{x_i}$, where $\omega^* > 0$ and $\omega_i > 0$ for all $i = 1, \dots, r-1$. Let $\bm c = (c_0, c_1, \dots, c_{hk})$ be the truncated moment sequence of $\nu$ up to the odd degree $hk$. The truncated moment matrix $H=\operatorname{Hankel}(\bm c)$ has maximum rank equal to $r$, so, according to \cite[Thm. 3.8 (ii)]{curtofialkow1991}, $\nu$ is the unique $r$-atomic representing measure for $\bm c$ and any other representing measure for $\bm c$ must consist of strictly more than $r$ atoms. In particular, there does not exist any representing measure for $\bm c$ with $r$ or fewer atoms supported on $\RR\setminus\Z(\varphi_0)$.
    
    We now show the existence of an $(r+1)$-atomic representing measure for $\bm c$ supported on $\RR\setminus\Z(\varphi_0)$. The truncated moment matrix $H'=\operatorname{Hankel}(c_0,\ldots,c_{hk-1})$ is PSD and also invertible, because $x^*,x_1,\ldots,x_{r-1}$ are distinct.
    We choose $c_{hk+1}\in\RR$ such that $c_{hk+1}>(c_{r},\ldots,c_{hk})^\top (H')^{-1}(c_{r},\ldots,c_{hk})$ and set $\tilde{\bm c}=(c_0,\ldots,c_{hk+1})$. By the theory of Schur complements, $\operatorname{Hankel}(\tilde{\bm c})$ is positive definite and, applying \cite[Thm. 3.8 (ii)]{curtofialkow1991} to $\tilde{\bm c}$, yields that $\tilde{\bm c}$ is the truncated moment sequence of a positive Borel measure. Furthermore, by \cite[Thm. 2]{MR2231629}, $\tilde{\bm c}$ is also the truncated moment sequence of a finitely atomic positive measure, so we can apply \cref{thm:auxiliary} to $\tilde{\bm c}$ and deduce the existence of an $(r+1)$-atomic representing measure $\sum_{i=1}^{r+1}\lambda_i \delta_{z_i}$ for $\tilde{\bm c}$, thus also for $\bm c$, supported on $\RR\setminus\Z(\varphi_0)$. 
    
    The measure $\mu=\sum_{i=1}^{r+1}\lambda_iq_0(1,z_i)^k \delta_{\bm\varphi(z_i)}$ has a quadrature formula of strength $k$, with $r+1$ nodes  supported on $\C$. 
    Any quadrature formula for $\mu$ with $r$ nodes on $\C$ is the pushforward of an $r$-atomic representing measure for $\bm c$ supported on $\RR\setminus\Z(\varphi_0)$, which  as discussed above, cannot exist.
    This shows that the bounds in \cref{ComparisonWithRienerTuratti} are sharp.
\end{proof}

\section{Effective decompositions of \texorpdfstring{$\bm q$}{q}-Symmetric tensors}\label{Sec4ExplicitDecomp}
In this section, under the assumption that $W_{\bm q}$ is surjective, we present an algorithm to explicitly compute a ${\bm q}$-Symmetric decomposition over $\KK$ of a ${\bm q}$-Symmetric tensor $p$, provided that a Waring decomposition over $\KK$ of $\psi_{\bm q}(p)$ is available. Finally, in \cref{SectionExampleDecomp}, we illustrate the use of \cref{AlgQSym} through an explicit example, computing a $\bm q$-Symmetric decomposition over $\RR$ of a monomial $\bm q$-Symmetric tensor and in \cref{SectionNumericalExperimentations} we probe the reliability of our implementation of \cref{AlgQSym} with the Julia package \href{https://github.com/matteobechere/QSymDecomposition.jl}{QSymDecomposition.jl}, available at \cite{QSymDecomposition}.

\begin{algorithm}[H]
\caption{${\bm q}$-Symmetric decompositions over $\KK$ of ${\bm q}$-Symmetric tensors}\label{AlgQSym}
\begin{algorithmic}[1]
    \Require Parameters $m,\,n,\,k,\,h\in\NN$, ${\bm q}=(q_0(\bm Z),\ldots,q_n(\bm Z))$,  with $q_i\in\S^h(\KK^{m+1})$, and $W_{\bm q}$ surjective, $p\in{\bm q}$-$\Sym_{k,n+1}$
    \Ensure A ${\bm q}$-Symmetric decomposition $p=\sum_{i=1}^r \lambda_i\langle \bm q(\bm\xi_i),\bm X\rangle^{k}$ over $\KK$
    \State Compute the matrix $\W$.
    \State Let $\A_{hk}=\mathrm{diag}((\binom{hk}{\bm\gamma}^{-1})_{\bm\gamma\in M_{hk,m+1}})$, $\A_{k}=\mathrm{diag}((\binom{k}{\bm\alpha}^{-1})_{\bm\alpha\in M_{k,n+1}})$ be the diagonal matrices with diagonals consisting of reciprocals of multinomial coefficients, ordered lexicographically. Compute $M=\A_k^{-1}\W^\top \A_{hk}$.
    \State Let $v$ be the coordinates of $p$ in the standard monomial basis of $\S^k(\KK^{n+1})$. Solve $Mw=v$ for $w$
    \State Compute $\psi_{\bm q}(p)$ as the form having $w$ as tuple of  coordinates in the standard monomial basis of $\S^{hk}(\KK^{m+1})$
    \State Compute a Waring decomposition over $\KK$ of $\psi_{\bm q}(p)$ and retrieve a decomposition $\psi_{\bm q}(p)=\sum_{i=1}^r\lambda_i\langle \bm\xi_i,\bm Z\rangle^{hk}$
    \State Output the ${\bm q}$-Symmetric decomposition $p=\sum_{i=1}^r \lambda_i\langle \bm q(\bm\xi_i),\bm X\rangle^{k}$
    
\end{algorithmic}
\end{algorithm}

\subsection{Explicit decomposition of a monomial \texorpdfstring{$\bm q$}{q}-Symmetric tensor}\label{SectionExampleDecomp}
In this subsection we explicitly compute a $\bm q$-Symmetric decomposition of a monomial $\bm q$-Symmetric tensor, using the following choice of ${\bm q}$
\begin{equation}\label{HighRankExample}
    \begin{gathered}
        q_0 = Z_0^2;\quad q_1 = Z_0 Z_1;\quad q_2 = Z_0 Z_2;\quad q_3 = Z_1^2;\quad
        q_4 = Z_1 Z_2;\quad q_5 = Z_2^2.
    \end{gathered}
\end{equation}
and $k=4$. This example showcases the following phenomenon: given a $\bm q$-Symmetric tensor $p$ whose Waring rank is too large for current methods to successfully decompose, by exploiting the $\bm q$-Symmetric structure, we are able to obtain a Waring decomposition by first computing $\psi_{\bm q}(p)$, performing a Waring decomposition of said form and finally mapping it back to a $\bm q$-Symmetric decomposition (in particular, a Waring decomposition) of $p$ via \cref{QSymMainTheorem}. By \cref{SurjectivityConditions}, the choice \cref{HighRankExample} guarantees surjectivity of $W_{\bm q}$, hence \cref{QSymMainTheorem} applies. Consider the $\bm q$-Symmetric tensor
\begin{align*}
p &= (4X_0 - 2X_1 + 2X_2 + X_3 - X_4 + X_5)^4- (X_0 + 2X_1 - X_2 + 4X_3 - 2X_4 + X_5)^4\\
&+ (X_0 - X_1 + 2X_2 + X_3 - 2X_4 + 4X_5)^4+ (4X_0 + 2X_1 - 2X_2 + X_3 - X_4 + X_5)^4\\
&- (X_0 + X_1 + X_2 + X_3 + X_4 + X_5)^4
+ (X_0 - 2X_1 - X_2 + 4X_3 + 2X_4 + X_5)^4\\
&+ (4X_0 + 2X_1 + 2X_2 + X_3 + X_4 + X_5)^4.
\end{align*}
Expanding the previous expression, we obtain a form consisting of $126$ monomials. For the sake of brevity, we only report the first and the last $3$, with respect to the lexicographic ordering.
\begin{equation*}
    p=768X_0^4 + 488X_0^3X_1 + 516X_0^3X_2 + \cdots  + 396X_4^2X_5^2 - 504X_4X_5^3 + 258X_5^4.
\end{equation*}
By construction, $p$ has Waring rank at most $7$. Since the Waring rank is not small enough, the \verb|decompose| function of the \href{https://github.com/AlgebraicGeometricModeling/TensorDec.jl}{TensorDec.jl} Julia package \cite{TensorDec}, which is based on the symmetric tensor decomposition algorithm introduced in \cite{symmetrictensordecomp}, is not able to successfully compute a Waring decomposition. Nonetheless, by implementing \cref{AlgQSym} in the \verb|qsym_decompose| function of the \href{https://github.com/matteobechere/QSymDecomposition.jl}{QSymDecomposition.jl} Julia package \cite{QSymDecomposition}, we are able to retrieve the following $\bm q$-Symmetric decomposition
\begin{equation*}
\begin{adjustbox}{width=\linewidth, keepaspectratio}
{\footnotesize $\begin{aligned}
    p_{\mathrm{dec}}&=1296(0.166667X_0 - 0.166667X_1 + 0.333333X_2 + 0.166667X_3 - 0.333333X_4 + 0.666667X_5)^4\\
    &- 81(0.333333X_0 + 0.333333X_1 + 0.333333X_2 + 0.333333X_3 + 0.333333X_4 + 0.333333X_5)^4\\
    &+ 1296(0.666667X_0 + 0.333333X_1 + 0.333333X_2 + 0.166667X_3 + 0.166667X_4 + 0.166667X_5)^4\\
    &+ 1296(0.666667X_0 - 0.333333X_1 + 0.333333X_2 + 0.166667X_3 - 0.166667X_4 + 0.166667X_5)^4\\
    &+ 1296(0.166667X_0 - 0.333333X_1 - 0.166667X_2 + 0.666667X_3 + 0.333333X_4 + 0.166667X_5)^4\\
    &+ 1296(0.666667X_0 + 0.333333X_1 - 0.333333X_2 + 0.166667X_3 - 0.166667X_4 + 0.166667X_5)^4\\
    &- 1296(0.166667X_0 + 0.333333X_1 - 0.166667X_2 + 0.666667X_3 - 0.333333X_4 + 0.166667X_5)^4
\end{aligned}$}
\end{adjustbox}
\end{equation*}
such that $\lVert p-p_{\mathrm{dec}}\rVert_4\sim\num{7.9377e-9}$, where $\lVert\cdot\rVert_4=\sqrt{\langle\cdot,\cdot\rangle_4}$ is the norm associated to the apolar product on $\S^4(\KK^6)$. To obtain $p_{\mathrm{dec}}$, we first computed $\psi_{\bm q}(p)$, obtaining
\begin{align*}
    \psi_{\bm q}(p) =&(2Z_0 - Z_1 + Z_2)^8- (Z_0 + 2Z_1 - Z_2)^8+ (Z_0 - Z_1 + 2Z_2)^8\\
&+ (2Z_0 + Z_1 - Z_2)^8 - (Z_0 + Z_1 + Z_2)^8 + (Z_0 - 2Z_1 - Z_2)^8 + (2Z_0 + Z_1 + Z_2)^8.
\end{align*}
When expanded, this form consists of $45$ monomials. Again, for the sake of brevity, we only report a few of them
\begin{equation*}
    \psi_{\bm q}(p)=768Z_0^8 + 976Z_0^7Z_1 + 1032Z_0^7Z_2 + \cdots + 1848Z_1^2Z_2^6 - 1008Z_1Z_2^7 + 258Z_2^8.
\end{equation*}
Furthermore, the Waring rank of $\psi_{\bm q}(p)$ is at most $7$, so in this case the \verb|decompose| function of the \href{https://github.com/AlgebraicGeometricModeling/TensorDec.jl}{TensorDec.jl} Julia package is able to successfully decompose $\psi_{\bm q}(p)$ and to compute
{\footnotesize \begin{align*}
    \psi_{\bm q}(p)_{\mathrm{dec}}&=1296(0.408248Z_0 - 0.408248Z_1 + 0.816497Z_2)^8 - 81(0.57735Z_0 + 0.57735Z_1 + 0.57735Z_2)^8\\
    &+ 1296(0.816497Z_0 + 0.408248Z_1 + 0.408248Z_2)^8
    + 1296(0.816497Z_0 - 0.408248Z_1 + 0.408248Z_2)^8\\
    &+ 1296(0.408248Z_0 - 0.816497Z_1 - 0.408248Z_2)^8
    + 1296(0.816497Z_0 + 0.408248Z_1 - 0.408248Z_2)^8\\
    &- 1296(0.408248Z_0 + 0.816497Z_1 - 0.408248Z_2)^8
\end{align*}
}%
such that $\lVert \psi_{\bm q}(p)-\psi_{\bm q}(p)_{\mathrm{dec}}\rVert_8\sim\num{1.411e-8}$.
Finally, we computed the decomposition $p_{\mathrm{dec}}$ of $p$ using \cref{QSymMainTheorem}.
\subsection{Quadrature on a rational curve}
In this section we present an example computation of a quadrature formula for a measure $\mu$ supported on a rational curve. In particular, we choose the denominator of the parametrization of the rational curve to have as many real zeros as possible, given the degree. Given the moments of the measure, we revisit the steps of the constructive proof of \cref{ComparisonWithRienerTuratti}: first of all, we define a $\bm q$-Symmetric tensor $p$ whose normalized coefficients are the moments of $\mu$. Then, we use \verb|qsym_decompose| on $p$ to obtain a $\bm q$-Symmetric decomposition. Finally, arguing as in the proof of \cref{ComparisonWithRienerTuratti}, we retrieve a quadrature formula for $\mu$.

In the notation of \cref{ComparisonWithRienerTuratti}, consider $k=4$ and
\begin{equation*}
    \bm q=((-Z_0+Z_1)(-2Z_0+Z_1)(3Z_0+Z_1),Z_0^2Z_1,Z_0Z_1^2,Z_1^3).
\end{equation*}
With this choice of $\bm q$, the rational curve has the parametrization given by $\bm\varphi:\RR\setminus\{-3,1,2\}\to\RR^3;\;t\mapsto (\frac{t}{(t-1)(t-2)(t+3)},\frac{t^2}{(t-1)(t-2)(t+3)},\frac{t^3}{(t-1)(t-2)(t+3)})$.
Since $\bm q$ is a basis of $\S^3(\RR^2)$, $W_{\bm q}$ is surjective (cf. \cref{CorollarySurjectivity}). Let $\mu$ be a positive Borel measure supported on $\C$ with truncated moment sequence up to degree $4$ given by
    
\begin{table}[H]
    \centering
    \small
    
\begin{tabular}{llll}
    \toprule
    
\multicolumn{4}{c}{Degree 0} \\
    \midrule
    $m_{(0,0,0)} = \num{4}$ & & & \\
    \midrule
    
\multicolumn{4}{c}{Degree 1} \\
    \midrule
    $m_{(1,0,0)} = \num{0.3390}$ & $m_{(0,1,0)} = \num{0.1398}$ & $m_{(0,0,1)} = \num{0.0612}$ & \\
    \midrule
    
\multicolumn{4}{c}{Degree 2} \\
    \midrule
    $m_{(2,0,0)} = \num{0.0478}$ & $m_{(1,1,0)} = \num{0.0216}$ & $m_{(1,0,1)} = \num{0.0101}$ & $m_{(0,2,0)} = \num{0.0101}$ \\
    $m_{(0,1,1)} = \num{0.0048}$ & $m_{(0,0,2)} = \num{0.0023}$ & & \\
    \midrule
    
\multicolumn{4}{c}{Degree 3} \\
    \midrule
    $m_{(3,0,0)} = \num{0.0078}$ & $m_{(2,1,0)} = \num{0.0037}$ & $m_{(2,0,1)} = \num{0.0018}$ & $m_{(1,2,0)} = \num{0.0018}$ \\
    $m_{(1,1,1)} = \num{8.3126e-4}$ & $m_{(1,0,2)} = \num{4.1416e-4}$ & $m_{(0,3,0)} = \num{8.3126e-4}$ & $m_{(0,2,1)} = \num{4.1416e-4}$ \\
    $m_{(0,1,2)} = \num{2.0660e-4}$ & $m_{(0,0,3)} = \num{1.0314e-4}$ & & \\
    \midrule
    
\multicolumn{4}{c}{Degree 4} \\
    \midrule
    $m_{(4,0,0)} = \num{0.0013}$ & $m_{(3,1,0)} = \num{0.0006}$ & $m_{(3,0,1)} = \num{0.0003}$ & $m_{(2,2,0)} = \num{0.0003}$ \\
    $m_{(2,1,1)} = \num{1e-4}$ & $m_{(2,0,2)} = \num{8.3126e-5}$ & $m_{(1,3,0)} = \num{1e-4}$ & $m_{(1,2,1)} = \num{8.3126e-5}$ \\
    $m_{(1,1,2)} = \num{4.1416e-5}$ & $m_{(1,0,3)} = \num{2.0660e-5}$ & $m_{(0,4,0)} = \num{8.3126e-5}$ & $m_{(0,3,1)} = \num{4.1416e-5}$ \\
    $m_{(0,2,2)} = \num{2.0660e-5}$ & $m_{(0,1,3)} = \num{1.0314e-5}$ & $m_{(0,0,4)} = \num{5.1520e-6}$ & \\
    \bottomrule
\end{tabular}
    
\end{table}

Let $p$ be the $\bm q$-Symmetric tensor obtained from the truncated moment sequence as in the proof of \cref{ComparisonWithRienerTuratti}, i.e. $p=m_{(0,0,0)}X_0^4+4m_{(1,0,0)}X_0^3X_1+4m_{(0,1,0)}X_0^3X_2+4m_{(0,0,1)}X_0^3X_3+\cdots+m_{(0,0,4)}X_3^4$.
Inputting $p$ in \verb|qsym_decompose| results in the following $\bm q$-Symmetric decomposition
{\footnotesize \begin{align*}
    p_{\mathrm{dec}}&=(X_0 + 0.190476X_1 + 0.095238X_2 + 0.047619X_3)^4+ (X_0 + 0.09X_1 + 0.03X_2 + 0.01X_3)^4\\
    &+ (X_0 + 0.058608X_1 + 0.014652X_2 + 0.003663X_3)^4
    + X_0^4,
\end{align*}}
such that $\lVert p-p_{\mathrm{dec}}\rVert_4\sim\num{3.806381e-11}$. We deduce that 
\begin{equation*}
    \delta_{(0.190476,0.095238,0.047619)}+ \delta_{(0.09,0.03,0.01)}+ \delta_{(0.058608,0.014652,0.003663)}+ \delta_{(0,0,0)}
\end{equation*}
is a quadrature formula of strength $4$ for $\mu$.
According to \cite[Thm.1.1 (2)]{Riener2025QuadratureRW}, given that $q_0(1,Z_1)$ has $3$ real zeros, $\mu$ has a quadrature formula of strength $4$ with at most $10$ nodes on $\C$. By \cref{ComparisonWithRienerTuratti}, we lower the upper bound to $7$.

\subsection{Numerical experimentation}\label{SectionNumericalExperimentations}
To assess the numerical reliability of the Julia implementation of \verb|qsym_decompose| for \cref{AlgQSym}, following the same procedure as in \cref{SectionExampleDecomp}, we developed the following experiment: for a fixed positive integer $r$ and a vector of forms $\bm q=(q_i)_i$ with $q_i\in\S^h(\RR^{m+1})$ such that the weight map $W_{\bm q}$ is surjective (see \cref{DefWeightFunction}), the code generates random vectors $\bm\lambda=(\lambda_1,\ldots,\lambda_r)\in\RR^r$ and random matrices $\Xi=(\xi_{i,j})_{0\leq i\leq m;1\leq j\leq r}\in\RR^{m+1\times r}$. The random real numbers are sampled from a standard normal distribution and the columns of $\Xi$ are normalized to be unit vectors with respect to the Euclidean norm. Furthermore, it computes the form $p=\sum_{j=1}^r \lambda_j(q_0(\xi_{0,j},\ldots,\xi_{m,j})X_0+\cdots +q_n(\xi_{0,j},\ldots,\xi_{m,j})X_n)^{k}\in\bm q\text{-Sym}_{k,n+1}$ and computes a $\bm q$-Symmetric decomposition of $p$ using \verb|qsym_decompose|. Finally, we measure the apolar norm of the difference of $p$ and the decomposition obtained with \verb|qsym_decompose|. We define a decomposition to be \emph{successful} if the apolar norm of the residual is smaller than $\num{1.e-4}$. The file \verb|NumericalReliabilityExperiment.jl|, available in the \verb|tests| folder at \cite{QSymDecomposition}, contains the source code to run this experiment.
The success rate is computed out of $50$ samples of $\bm\lambda$ and $\Xi$. The results for different choices of $\bm q$ and parameters $m,n,h,k$ are summarized for $m=2,n=8,h=3,k=3$ and $\bm q_1=(Z_0^3,Z_0^2Z_1,Z_0^2Z_2,Z_0Z_1^2,Z_0Z_2^2,Z_1^3,Z_1^2Z_2,Z_1Z_2^2,Z_2^3)$ in \cref{fig:success_rate_q1} and for $m=2,n=5,h=2,k=4$ and $\bm q_2=(Z_0^2,Z_0Z_1,Z_0Z_2,Z_1^2,Z_1Z_2,Z_2^2)$, as in \cref{HighRankExample}, in \cref{fig:success_rate_q2}. In the first example, the green curve corresponds to the performance of \verb|qsym_decompose| when the generating length of the $\bm q$-Symmetric tensor is specified as additional input, improving the accuracy. In the second example, the additional input did not yield any improvements in the performance.
\begin{figure}[tbhp]
\centering
\begin{tikzpicture}
\begin{axis} [
    width=0.85\linewidth,
    height=4cm,
    xlabel={$r$},
    ylabel={Success rate},
    ymin=0, ymax=1.05,
    xtick=data,
    ytick={0,0.25,0.5,0.75,1},
    grid=major,                 
    thick,
    legend cell align={left},   
    legend style={
    at={(0.02,0.05)},
    anchor=south west,
    legend columns=1,
    font=\small
},
    legend entries={
        \texttt{decompose},
        \texttt{qsym\_decompose} (length),
        \texttt{qsym\_decompose}
    }
]

\addplot[
    mark=*, color=red
]
coordinates {
(1,1.0) (2,1.0) (3,1.0) (4,1.0) (5,1.0)
(6,1.0) (7,1.0) (8,1.0) (9,1)
(10,0) (11,0) (12,0)
(13,0) (14,0) (15,0)
(16,0) (17,0) (18,0) (19,0)
};

\addplot[
    mark=square*, color=green!70!black
]
coordinates {
(1,1.0) (2,1.0) (3,1.0) (4,1.0) (5,1.0)
(6,1.0) (7,1.0) (8,1.0) (9,1.0)
(10,1.0) (11,1.0) (12,1.0)
(13,1) (14,1) (15,1)
(16,0) (17,0) (18,0) (19,0)
};

\addplot[
    mark=triangle*, color=blue
]
coordinates {
(1,1.0) (2,1.0) (3,1.0) (4,1.0) (5,1.0)
(6,1.0) (7,1.0) (8,1.0) (9,1.0)
(10,1.0) (11,1.0) (12,1.0)
(13,0.98) (14,0.9) (15,0.72)
(16,0) (17,0) (18,0) (19,0)
};

\end{axis}
\end{tikzpicture}
\caption{Success rate for $\bm q_1$ as a function of the rank $r$.}
\label{fig:success_rate_q1}
\end{figure}

\begin{figure}[tbhp]
\centering
\begin{tikzpicture}
\begin{axis}[
    width=0.85\linewidth,
    height=4cm,
    xlabel={$r$},
    ylabel={Success rate},
    ymin=0, ymax=1.05,
    xtick={1,3,...,21},
    ytick={0,0.25,0.5,0.75,1},
    grid=major,                 
    thick,
    legend cell align={left},   
    legend style={
    at={(0.95,0.95)},
    anchor=north east,
    legend columns=1 ,
    font=\small
},
    legend entries={
        \texttt{decompose},
        \texttt{qsym\_decompose}
    }
]

\addplot[
    mark=*, color=red
]
coordinates {
(1,1.0) (2,1.0) (3,1.0) (4,1.0) (5,1.0)
(6,1.0) (7,0) (8,0) (9,0)
(10,0) (11,0) (12,0)
(13,0) (14,0) (15,0)
(16,0) (17,0) (18,0)
(19,0) (20,0) (21,0)
};

\addplot[
    mark=triangle*, color=blue
]
coordinates {
(1,1.0) (2,1.0) (3,1.0) (4,1.0) (5,1.0)
(6,1.0) (7,1.0) (8,1.0) (9,1.0)
(10,1.0) (11,0) (12,0)
(13,0) (14,0) (15,0)
(16,0) (17,0) (18,0)
(19,0) (20,0) (21,0)
};

\end{axis}
\end{tikzpicture}
\caption{Success rate for $\bm q_2$ as a function of the rank $r$.}
\label{fig:success_rate_q2}
\end{figure}

\section*{Acknowledgements}
The authors would like to thank Evelyne Hubert, Ettore Teixeira Turatti and Alja\v{z} Zalar for the helpful discussions and Henri Linus Breloer for the insightful comments leading to \cref{ExampleHenri}. Our work is supported by the European Union’s HORIZON-MSCA-2023-DN-JD programme under the Horizon Europe (HORIZON) Marie Sklodowska-Curie Actions, grant agreement 101120296 (TENORS).

\bibliographystyle{siamplain}
\bibliography{refs}

\end{document}